\documentclass{article} 



\bibliographystyle{plain}

\usepackage{latexsym}
\usepackage{amsfonts}
\usepackage{amssymb}
\usepackage{amsmath}
\usepackage{amscd}
\usepackage{eucal}
\usepackage[all, knot]{xy}
\xyoption{arc}
\xyoption{rotate}
\xyoption{dvips}
\usepackage{amsthm}
\usepackage[dvips]{pict2e}
\usepackage{pstricks}

\usepackage{verbatim} 
\usepackage[dvips]{graphicx}

\numberwithin{equation}{section}


\newtheorem{theorem}{Theorem}[section]

\newtheorem{proposition}[theorem]{Proposition}

\newtheorem{lemma}[theorem]{Lemma}
\newtheorem{definition}[theorem]{Definition}

\newtheorem{example}[theorem]{Example}
\newtheorem{examples}[theorem]{Examples}
\newtheorem{remarks}[theorem]{Remarks}

\newtheorem{thm}[theorem]{Theorem}

\newtheorem{prop}[theorem]{Proposition}
\newtheorem{cor}[theorem]{Corollary}

\newenvironment{mydefinition}{\begin{definition}\upshape} {\end{definition}}
\newenvironment{myremark}{\begin{remark}\upshape} {\end{remark}}
\newenvironment{myremarks}{\begin{remarks}\upshape} {\end{remarks}}
\newenvironment{myexample}{\begin{example}\upshape} {\end{example}}
\newenvironment{myexamples}{\begin{examples}\upshape} {\end{examples}}



\newtheoremstyle{example}{\topsep}{\topsep}%
     {}
     {}
     {\bfseries}
     {.}
     {8pt}
     {\thmname{#1}\thmnumber{ #2}\thmnote{ #3}}

   \theoremstyle{example}

\newtheorem{remark}[theorem]{Remark}





\newcommand{\glob}{
\xy
(-5,0)*+{.}="1";
(5,0)*+{.}="2";
{\ar@/^1pc/^{} "1";"2"};
{\ar@/_1pc/_{} "1";"2"};
{\ar@{=>}^{} (0,2)*{};(0,-2)*{}} ;
\endxy}

\newcommand{\iso}{\cong}
\newcommand{\catequiv}{\simeq}
\newcommand{\vardelta}{\partial}
\newcommand{\lra}{\ensuremath{\longrightarrow}}
\newcommand{\cat}[1]{\ensuremath{\mbox{\bfseries {\upshape {#1}}}}}
\newcommand{\cl}[1]{\ensuremath{\mathcal {#1}}}
\newcommand{\bb}[1]{\ensuremath{\mathbb {#1}}}
\newcommand{\ed}{\end{document}}
\newcommand{\map}[1]{\ensuremath{\stackrel{{#1}}{\lra}}}

\newcommand{\demph}[1]{{\bfseries #1}}

\newcommand{\ncat}[1]{\cat{$#1$-Cat}}
\newcommand{\bncat}[1]{\cat{$(#1)$-Cat}}

\newcommand{\numarabic}{\renewcommand{\labelenumi}{\arabic{enumi}.}}

\newenvironment{prf}{\vspace{2ex}\begin{sloppypar}{\noindent\upshape
{\bfseries Proof. }}} {{\hspace*{\fill}
$\Box$}\end{sloppypar}\vspace{2ex}}



















\newcommand{\psinv}[6]{\xy
(#1,0)*+{#3}="x";
(#2,0)*+{#5}="y";
{\ar@<.7ex>^{#4} "x"; "y"};
{\ar@<.7ex>^{#6} "y"; "x"};
\endxy
}




\newcommand{\op}{\ensuremath{\mbox{\hspace{1pt}{\scriptsize\upshape op}}}}



\newcommand{\fc}{\cat{fc}}


\newcommand{\dcorner}{
\xy
(-2,2)*{}; (0,0) **\dir{-};
(2,2)*{}; (0,0) **\dir{-};
\endxy}

\newcommand{\ucorner}{
\xy
(-2,-2)*{}; (0,0) **\dir{-};
(2,-2)*{}; (0,0) **\dir{-};
\endxy}


\newcommand{\citencat}{\cite{str2,bd1,bat1,pen1,tam1,joy1,lei5,lei7,hmp3,hmp4,tri1}}


\begin{document}

\title{Comparing operadic theories of $n$-category}

\author{Eugenia Cheng\\Department of Mathematics, Universit\'e de Nice Sophia-Antipolis \\ and \\ Department of Mathematics, University of Sheffield \\E-mail: e.cheng@sheffield.ac.uk}

\maketitle

\begin{abstract}

We give a framework for comparing on the one hand theories of $n$-categories that are weakly enriched operadically, and on the other hand $n$-categories given as algebras for a contractible globular operad. Examples of the former are the definition by Trimble and variants (Cheng-Gurski) and examples of the latter are the definition by Batanin and variants (Leinster).  We first provide a generalisation of Trimble's original theory that allows for the use of other parametrising operads in a very general way, via the notion of categories weakly enriched in \cl{V} where the weakness is parametrised by a \cl{V}-operad $P$. We define weak $n$-categories by iterated weak enrichment using a series of parametrising operads $P_i$.  We then show how to construct from such a theory an $n$-dimensional globular operad for each $n\geq 0$ whose algebras are precisely the weak $n$-categories, and we show that the resulting globular operad is contractible precisely when the operads $P_i$ are contractible.  We then show how the globular operad associated with Trimble's topological definition is related to the globular operad used by Batanin to define fundamental $n$-groupoids of spaces.
\end{abstract}

\newpage

\setcounter{tocdepth}{2}
\tableofcontents

\section*{Introduction}

\addcontentsline{toc}{section}{Introduction}

Many different notions of weak $n$-category have been proposed in the literature \citencat, and one of the most fundamental open questions in the subject concerns the relationships between these theories.  Few comparison functors have been constructed, let alone full equivalences between theories, although various relationships are widely suspected.

The aim of this paper is to compare the theories of Trimble \cite{tri1} and Batanin \cite{bat1}.  The consequences of this comparison go beyond a mere technicality of the foundations of higher-dimensional category theory.  Trimble's theory is the only one that explicitly uses classical operads, so this comparison opens up the possibility of using the huge and well-developed theory of classical operads, including all the topological and homotopy theoretic techniques developed for that subject, for the study of $n$-categories.

 In this case, unlike when comparing other theories (see for example \cite{che7,che8}), it is not the underlying shapes of cells that is the main issue, but rather the way in which composition and coherence are handled.  Each of these definitions uses the formalism of operads to control the operations of an $n$-category, but in very different ways.  Trimble's definition uses a classical (non-symmetric) operad iteratively, whereas Batanin's definition uses a globular operad non-iteratively.  The idea is that the operad operations of a given arity will be the different ways of composing a configuration of cells of that arity.  Since a classical operad  only has arities which are integers $k \geq 0$, it can \emph{a priori} only parametrise composites of arities $k \geq 0$.  This is enough for 1-categories, and indeed for the homotopy monoids with which operads achieved much of their early success \cite{may2}.  That is, in a 1-category a ``configuration for composition'', or pasting diagram, is just a string of composable arrows
\[\xy
(0,0)*+{a_0}="1";
(15,0)*+{a_1}="2";
(28,0)*+{}="3";
(35,0)*+{\ldots}="9";
(42,0)*+{}="33";
(57,0)*+{a_{k-1}}="4";
(73,0)*+{a_{k}}="5";
{\ar^{f_1} "1";"2"};
{\ar^{f_2} "2";"3"};
{\ar^{} "33";"4"};
{\ar^{f_k} "4";"5"};
\endxy\]
so its arity is completely specified by one integer $k$.  However, in a higher-dimensional category we have composable diagrams such as
\vspace{-6pt}\[
       \def\objectstyle{\scriptstyle}
       \xy
   (18,0)*{\cdot}="1";
   (27,0)*{\cdot}="2";
       {\ar@/^.8pc/ "1";"2"};
       {\ar@/_.8pc/ "1";"2"};
       {\ar@/^2pc/ "1";"2"};
       {\ar@/_2pc/ "1";"2"};
       {\ar@/_3pc/ "1";"2"};
       {\ar@{=>} (22.5,1.5)*{};(22.5,-1.5)*{}} ;
       {\ar@{=>} (22.5,7.25)*{};(22.5,4.75)*{}} ;
       {\ar@{=>} (22.5,-4.75)*{};(22.5,-7.25)*{}} ;
       {\ar@{=>} (22.5,-9.25)*{};(22.5,-11.75)*{}} ;
       (-9,0)*{\cdot}="1";
       (0,0)*{\cdot}="2";
           {\ar "1";"2"};
           {\ar@/^1.1pc/ "1";"2"};
           {\ar@/_1.1pc/ "1";"2"};
           {\ar@{=>} (-4.5,3)*{};(-4.5,.75)*{}} ;
           {\ar@{=>} (-4.5,-.75)*{};(-4.5,-3)*{}} ;
   (0,0)*{\cdot}="1";
   (9,0)*{\cdot}="2";
       {\ar@/^.8pc/ "1";"2"};
       {\ar@/_.8pc/ "1";"2"};
       {\ar@{=>} (4.5,1.75)*{};(4.5,-1.75)*{}} ;
   (9,0)*{\cdot}="1";
   (18,0)*{\cdot}="2";
    {\ar "1";"2"};
\endxy\]
which evidently cannot be completely specified by just one integer.  The reason is that we now have the possibility of many different types of composition -- along bounding 0-cells, 1-cells, 2-cells, and so on -- and a general $n$-dimensional pasting diagram may involve many of these different types of composition at once.

Trimble's approach uses an iterative process to build up these more complicated forms of composition.  Batanin, on the other hand, deals with this issue by starting with a more complicated form of operad -- a globular operad.  A globular operad has as arities not just integers $k \geq 0$, but \emph{globular pasting diagrams}.  These are exactly the arities we need for composition in an $n$-category.

The idea behind the comparison is to compare the operads in question.  Where Batanin's $n$-categories are controlled by a single ``contractible globular operad'', a ``Trimble-like'' theory of $n$-categories is controlled by a \emph{series} of classical operads -- each of which parametrises just one type of composition.  We will take such a series and construct from it a single globular operad that parametrises all types of composition simultaneously.  That is, it single-handedly encodes exactly the operations that the series of classical operads encoded collectively.  Our main theorem will be that, given any ``Trimble-like'' theory of $n$-categories, we have a contractible globular operad whose algebras are precisely the $n$-categories we started with. This is a generalisation of Leinster's Claim~10.1.9 in \cite{lei8}, in which he conjectures this result in the specific case of Trimble's original definition, whereas we have generalised it to a much broader framework.

First we must explain what we mean by ``Trimble-like'' theory of $n$-categories.  Trimble uses one specific operad to define his notion of $n$-category (extracting a series of operads from it), but it is possible and indeed desirable to generalise his framework to allow for the use of other suitable operads.  We are motivated by analogy with the study of loop spaces much of whose success is rooted in the use of different operads for different situations.  As a more specific example, we are motivated by the desire to be able to use a ``smooth version'' of Trimble's theory, to handle $n$-categories of cobordisms (see \cite{cg2}); Trimble's original theory is the ``continuous version''.

Thus we begin, in Section~\ref{onepointone} by stating Trimble's original definition, and continue, in Section~\ref{onepointtwo} by generalising it to what we call an ``iterated operadic theory of $n$-categories''.  Given a finite product category \cl{V} and an operad $P$ in \cl{V} we introduce the notion of a $(\cl{V}, P)$-category, which is a cross between a $P$-algebra and a \cl{V}-category, and is to be thought of as a ``category enriched in \cl{V} but weakened by the action of the operad $P$''.  The idea is that instead of having one composite for any given string of $k$ composable cells, we have one for each such string \emph{together with} an element of $P(k)$.  We can compare this with the generalisation from monoid to $P$-algebra.  A monoid can be expressed as a set $A$ together with, for each integer $k \geq 0$, a multiplication map:
\[A^k \lra A.\]
This gives, for every string of $k$ elements of $A$, an element of $A$ which is to be regarded as their product.  For a $P$-algebra, we now have a map:
\[P(k) \times A^k \lra A\]
 which gives, for every string of $k$ elements of $A$ \emph{together with} an element of $P(k)$, an element of $A$ which we might regard as a ``parametrised product'' of the $k$ elements of $A$.  For example, in the case of $A_\infty$-spaces \cite{sta1} $A$ and $P(k)$ are spaces; the multiplication parametrised by $P(k)$ is associative and unital up to homotopy as each $P(k)$ is required to be contractible.

We can apply this principle for composition in a category as well.  In an ordinary category $A$, composition is given by, for every integer $k \geq 0 $ and objects $a_0, \ldots, a_k$, a map:
\[A(a_{k-1}, a_k) \times \cdots \times A(a_0, a_1) \lra A(a_0, a_k).\]
This gives, for every string of $k$ composable morphisms of $A$, a morphism of $A$ which is to be regarded as their composite; it is sufficient (and usual) to specify these maps only for the cases $k = 0,2$ as the associativity axioms ensure that we then have a well-defined such map for each $k \geq 0$.  To parametrise this by an operad $P$, we now specify composition maps:
\[P(k) \times A(a_{k-1}, a_k) \times \cdots \times A(a_0, a_1) \lra A(a_0, a_k)\]
which give, for every string of $k$ composable morphisms of $A$ \emph{together with} an element of $P(k)$, a morphism of $A$ which we regard as a ``parametrised composite''.  As above, we can do this enriched in \cat{Top} or indeed in various other suitable enriching categories $\cl{V}$, and this gives a notion of ``weak enrichment'' that we can then iterate to form weak $n$-categories as follows.

Recall that standard enrichment can be iterated to produce higher categories, but we will only reach \emph{strict} $n$-categories in this way: we construct, for each $n \geq 0$ a category $\cl{S}_n$ of strict $n$-categories inductively as follows:

\[\begin{array}{rcl}
\cl{S}_0 &=& \cat{Set} \\
\cl{S}_1 &=& \cat{$\cl{S}_0$-Cat}\\
\cl{S}_2 &=& \cat{$\cl{S}_1$-Cat}\\
\cl{S}_3 &=& \cat{$\cl{S}_2$-Cat}\\
& \vdots  \\
\cl{S}_n &=& \cat{$\cl{S}_{n-1}$-Cat}\\
& \vdots  \\
\end{array}\]
For weak $n$-categories we need to use our weak form of enrichment, so for each $n \geq 0$ we define
\begin{itemize}
\item a category $\cl{V}_n$ of weak $n$-categories, and
\item an operad $P_n$ in $\cl{V}_n$ which will parametrise composition in weak $(n+1)$-categories.
\end{itemize}
We then construct weak $n$-categories by the following inductive process:
\[\begin{array}{rcl}
\cl{V}_0 &=& \cat{Set} \\
\cl{V}_1 &=& \cat{$(\cl{V}_0,P_0)$-Cat}\\
\cl{V}_2 &=& \cat{$(\cl{V}_1, P_1)$-Cat}\\
\cl{V}_3 &=& \cat{$(\cl{V}_2, P_2)$-Cat}\\
& \vdots  \\
\cl{V}_n &=& \cat{$(\cl{V}_{n-1}, P_{n-1})$-Cat}\\
& \vdots  \\
\end{array}\]
As a final proviso, note that we need to place a condition on the operads $P_i$ to ensure that the resulting $n$-categories are suitably coherent.  We introduce a notion of contractibility and demand that each of the operads $P_i$ is contractible.

In this framework it seems necessary to give a whole series of parametrising operads $P_i$ as part of the data when defining $n$-categories.  In fact Trimble's original definition starts with just one operad $E$ in topological spaces; one of the most elegant features of his definition is that  the series of operads $P_i$ is produced from the single operad $E$ as part of the inductive process.  However it is the series of operads $P_i$ that we need to make the comparison with Batanin's definition, so this is the framework we use for the rest of the work.  In other work \cite{cg2,et1} we focus on the iteration producing the $P_i$ from the single operad $E$; May's attempt to generalise Trimble's iteration appears in \cite{may1} but has been observed to be flawed by Batanin (the induction step does not go through).  Also note that as this definition is by induction, it only defines $n$-categories for finite $n$; in \cite{et1} we use a terminal coalgebra construction to construct an $\omega$-dimensional version of Trimble's definition.

It is useful to understand what each of these operads $P_i$ is parametrising. Given an integer $n \geq 0$, an $n$-category $A$ will be defined by
\begin{itemize}
\item a set $A_0$ of 0-cells, and
\item for every pair $a, a'$ of 0-cells, an $(n-1)$-category $A(a,a') \in \cl{V}_{n-1}$, equipped with
\item for every integer $k \geq 0$ and 0-cells $a_0, \ldots, a_k$, a composition morphism
    \[P_{n-1}(k) \times A(a_{k-1}, a_k) \times \cdots \times A(a_0, a_1) \lra A(a_0, a_k)\]
in $\cl{V}_{n-1}$ giving composition along bounding 0-cells
\end{itemize}
satisfying certain axioms.  Note that this composition morphism tells us how 0-composition of $(m+1)$-cells of $A$ is parametrised by the $m$-cells of $P_{n-1}$, for each $0 \leq m \leq n-1$.

Of course, for an $n$-category we also require composition along bounding $i$-cells for all $1 \leq i \leq n-1$, but here these are given inductively -- they are contained in the data for the hom-$(n-1)$-categories $A(a,a')$.  Thus we see that $i$-composition in $A$ is given by 0-composition in the hom-$(n-i)$-categories, so is parametrised by $P_{n-i-1}$.



All of this indexing is summed up in Table~\ref{bigtable}, with entries stating which dimension of which operad parametrises the given composition.

\begin{table}[htb]
\caption{\label{bigtable} Indexing for composition parametrised by operads $P_i$}
\vspace{2em}
\hspace*{-9em}
\begin{tabular}{|rc|ccccccc|}
\hline &&&&&&&&\\
&&&&& \makebox[0em]{\textsf{{\bfseries composition of}}} &&& \\[6pt]
& & \bfseries{1-cells}  &  \bfseries{2-cells}  &  \bfseries{3-cells}  &  \bfseries{4-cells}  & $\cdots$ & \bfseries{$(n-1)$-cells}  & \bfseries{$n$-cells}  \\[6pt]
\hline &&&&&&&& \\[-4pt]
&  \bfseries{0-cells}  & 0 of $P_{n-1}$ & 1 of $P_{n-1}$ & 2 of $P_{n-1}$ & 3 of $P_{n-1}$ & $\cdots$ & $(n-2)$ of $P_{n-1}$ & $(n-1)$ of $P_{n-1}$ \\[7pt]
&  \bfseries{1-cells}  & -- & 0 of $P_{n-2}$ & 1 of $P_{n-2}$ & 2 of $P_{n-2}$ & $\cdots$ & $(n-1)$ of $P_{n-2}$ & $(n-2)$ of $P_{n-2}$ \\[7pt]
&  \bfseries{2-cells}  & -- & -- & 0 of $P_{n-3}$ & 1 of $P_{n-3}$ & $\cdots$  & $(n-2)$ of $P_{n-3}$ & $(n-3)$ of $P_{n-3}$ \\[7pt]
 \textsf{\bfseries{along}} \hspace{-1em} &  \bfseries{3-cells}  & -- & -- & -- & 0 of $P_{n-4}$ & $\cdots$  & $(n-3)$ of $P_{n-4}$ & $(n-4)$ of $P_{n-4}$ \\[7pt]
& $\vdots$ & $\vdots$ & $\vdots$ & $\vdots$ & $\vdots$ & & $\vdots$ & $\vdots$ \\[6pt]
&  \bfseries{$(n-3)$-cells}  & -- & -- & -- & -- & $\cdots$  & 1 of $P_2$ & 2 of $P_2$\\[7pt]
&  \bfseries{$(n-2)$-cells}  & -- & -- & -- & -- & $\cdots$ & 0 of $P_1$ & 1 of $P_1$\\[7pt]
&  \bfseries{$(n-1)$-cells}  & -- & -- & -- & -- & $\cdots$ & -- & 0 of $P_0$\\[6pt]
\hline
\end{tabular}
\vspace{2em}
\end{table}

This pattern of shifting dimensions is what we need to encode abstractly when we compile the operads $P_i$ into one globular operad in Section~\ref{four}.  As a preliminary to this process, we show that each $(\cl{V},P)$-category construction is itself operadic.  This is the subject of Section~\ref{two}.  We recall the notion of $(\cl{E}, T)$-operad, where \cl{E} is a cartesian category and $T$ a cartesian monad on \cl{E}.  We then show how to start with a classical operad $P$ in \cl{V} as above, and construct an $(\cl{E}, T)$-operad $\Sigma P$ whose algebras are precisely the $(\cl{V}, P)$-categories of the previous section.  We will take \cl{E} to be the category \cat{\cl{V}-Gph} of graphs enriched in \cl{V}, and $T$ to be the free \cl{V}-category monad $\cat{fc}_{\cl{V}}$; these constructions are possible provided \cl{V} is suitably well-behaved.  $\Sigma P$ is a sort of ``suspension'' of $P$, so that $P$ can be thought of as the ``one-object'' version of $\Sigma P$, or rather, algebras for $P$ are one-object versions of algebras for $\Sigma P$ just as monoids are the one-object version of categories.

An immediate consequence is that we have a cartesian monad  $\cat{fc}_{(\cl{V},P)}$ on \cat{$\cl{V}$-Gph} giving free $(\cl{V},P)$-categories, and we will rely heavily on this in Section~\ref{four}.  This monadicity result could be proved directly but we have included the above results as we consider them to be interesting in their own right.

In Section 3 we briefly recall Batanin's definition of weak $n$-category.  In fact we use a non-algebraic version of Leinster's variant of this definition. Batanin's original definition uses \emph{contractible globular operads with a system of compositions}.  The idea is that the ``system of compositions'' ensures that binary and nullary composites exist, the globular operad keeps track of all the operations derived from the binary composites, and the contractibility ensures that the result is sufficiently coherent.  These special globular operads ``detect'' weak $n$-categories in the way that $A_\infty$-operads ``detect'' loop spaces -- a globular set is a weak $n$-category if and only if it is an algebra for any such operad.

Leinster's variant combines the notions of system of compositions and contraction into one more general notion of contraction.  This means that contractions now provide composites \emph{and} coherence cells.  The composition is ``unbiased'', that is, all arities of composition are specified, not just binary and nullary ones; as before, the globular operad keeps track of all the operations derived from these composites. Leinster then considers \emph{globular operads with contraction}, and uses these to ``detect'' weak $n$-categories as above.


In the present work we use the non-algebraic version of this definition, in that we consider \emph{contractible globular operads} rather than operads with a \emph{specified} contraction.  This means in effect that instead of specifying one composite for each arity of composition, we just ensure that such a composite \emph{exists} for each arity. Likewise, instead of specifying one coherence cell to mediate between any given pair of different composites of the same diagram, we simply demand that such a cell exists; if there are many, we do not insist on distinguishing one.  This is the version used by Berger and Cisinski in their work on Batanin's theory \cite{ber2,cis1}.

This makes for a more natural comparison with Trimble's theory, as we have already seen that, given a string of $k$ composable morphisms, we do not have a unique composite, but rather one composite for every element of $P(k)$.  Note that this does not mean Trimble's theory is not algebraic -- indeed the main result of this work shows that Trimble-like $n$-categories are algebras for a certain operad.  The point is that these $n$-categories can be considered to be algebraic once the series of parametrising operads has been fixed.

In Section 4 we give the main comparison construction.  We show that any iterative operadic theory of $n$-categories is operadic in the sense of Batanin/Leinster.  More precisely, consider an iterative operadic theory of $n$-categories given by a series of operads $P_i$ in categories $\cl{V}_i$, for $i \geq 0$, with
\begin{itemize}
\item $\cl{V}_0 = \cat{Set}$, and
\item for all $i \geq 0$, $\cl{V}_{i+1} = \cat{$(\cl{V}_i, P_i)$-Cat}$.
\end{itemize}
The main result of Section~\ref{four} is that, given this data, there is for each $n \geq 0$ a globular operad $Q^{(n)}$ such that the category of algebras for $Q^{(n)}$ is $\cl{V}_n$, the category of $n$-categories we started with.  Furthermore, $Q^{(n)}$ is contractible (in the sense of Leinster) if for all $0 \leq i \leq n-1$, the operad $P_i$ is contractible (in the sense of Section~\ref{onepointtwo}).

The idea behind the construction is to compile the classical operads $P_i$ into one globular operad.  A globular operad must have, for every pasting diagram $\alpha$, a set of operations of arity $\alpha$, and we want this to be the set of ``ways of composing cells in the configuration of $\alpha$''.  Note that an $m$-pasting diagram may involve composition along any dimension of cell up to $m-1$, and according to our iterative operadic theory, each of these is parametrised by a different operad.  Table~\ref{bigtable} indicates to us how to find all the composites of a given pasting diagram intuitively.  For example, for the following pasting diagram

\vspace{-6pt}\[
       \def\objectstyle{\scriptstyle}
       \xy
   (18,0)*{\cdot}="1";
   (27,0)*{\cdot}="2";
       {\ar@/^.8pc/ "1";"2"};
       {\ar@/_.8pc/ "1";"2"};
       {\ar@/^2pc/ "1";"2"};
       {\ar@/_2pc/ "1";"2"};
       {\ar@/_3pc/ "1";"2"};
       {\ar@{=>} (22.5,1.5)*{};(22.5,-1.5)*{}} ;
       {\ar@{=>} (22.5,7.25)*{};(22.5,4.75)*{}} ;
       {\ar@{=>} (22.5,-4.75)*{};(22.5,-7.25)*{}} ;
       {\ar@{=>} (22.5,-9.25)*{};(22.5,-11.75)*{}} ;
       (-9,0)*{\cdot}="1";
       (0,0)*{\cdot}="2";
           {\ar "1";"2"};
           {\ar@/^1.1pc/ "1";"2"};
           {\ar@/_1.1pc/ "1";"2"};
           {\ar@{=>} (-4.5,3)*{};(-4.5,.75)*{}} ;
           {\ar@{=>} (-4.5,-.75)*{};(-4.5,-3)*{}} ;
   (0,0)*{\cdot}="1";
   (9,0)*{\cdot}="2";
       {\ar@/^.8pc/ "1";"2"};
       {\ar@/_.8pc/ "1";"2"};
       {\ar@{=>} (4.5,1.75)*{};(4.5,-1.75)*{}} ;
   (9,0)*{\cdot}="1";
   (18,0)*{\cdot}="2";
    {\ar "1";"2"};
\endxy\]
we have four ``columns'' of 1-composites, the results of which are composed (horizontally) by a 4-ary 0-composition.  We need a parametrising element for \emph{each} part of the composition, thus
\begin{itemize}
\item for the first column, we have a 2-ary 1-composite of 2-cells, so consulting our table we find that this is parametrised by a 0-cell of $P_{n-2}(2)$,
\item for the second column, we have a 1-ary 1-composite, so this is parametrised by a 0-cell of $P_{n-2}(1)$,
\item similarly for the third column we need a 0-cell of $P_{n-2}(0)$,
\item for the last column we need a 0-cell of $P_{n-2}(4)$, and finally
\item the 0-composition is parametrised by a 1-cell of $P_{n-1}(4)$.
\end{itemize}
So, writing the $m$-cells of $P_i(k)$ as $P_i(k)_m$, we see that the set of ``ways of composing'' the above diagram should be given by
\[P_{n-1}(4)_1 \times P_{n-2}(2)_0 \times P_{n-2}(1)_0 \times P_{n-2}(0)_0 \times P_{n-2}(4)_0.\]
Of course, to prove this rigorously we use a much more abstract argument.  We use the fact that a globular operad is given by a cartesian monad $Q$, equipped with a cartesian natural transformation $Q \Rightarrow T$,
where $T$ is the free strict $\omega$-category monad on the category \cat{GSet} of globular sets, or the free strict $n$-category monad if we are dealing with $n$-dimensional globular operads.
Thus, to construct the operad for iterative operadic $n$-categories, we construct its associated monad. We follow Leinster's method for constructing the monad for \emph{strict} $n$-categories, which proceeds by induction using:
\begin{itemize}
\item the free strict $(n-1)$-category monad, $T^{(n-1)}$,
\item the free \cl{V}-category monad $\cat{fc}_{\cl{V}}$ on \cat{\cl{V}-Gph}, with $\cl{V} = \cat{$(n-1)$-GSet}$, and
\item a distributive law governing their interaction.
\end{itemize}
So we can construct the free strict $n$-category on an $n$-globular set in the following steps:
\begin{enumerate}
\item construct $i$-composites for $i \geq 1$, using $T^{(n-1)}$, and then
\item construct 0-composites freely, using the monad $\cat{fc}_{\cl{V}}$ as above.
\end{enumerate}
The distributive law comes from the interchange laws of 0-composition and $i$-composition, for all $i \geq 1$; the categories \cat{$n$-GSet} of $n$-dimensional globular sets appear as a result of iterating the \cl{V}-graph construction, starting with $\cl{V} =\cat{Set}$ (Lemma~\ref{lemmathreepointtwo}).

We can copy this construction for our weak enrichment, using:
\begin{itemize}
\item the free \emph{weak} $(n-1)$-category monad, $Q^{(n-1)}$ (by induction),
\item the free $(\cl{V},P)$-category monad $\cat{fc}_{(\cl{V},P)}$, with $\cl{V} = \cat{$(n-1)$-GSet}$ and $P$ the underlying globular set operad of $P_{n-1}$, and
\item a distributive law governing their interaction.
\end{itemize}
In this case the distributive law comes from a sort of ``parametrised interchange law'' which we will explain at the end of  Section~\ref{fourpointtwo}.  Note that the presence of this law means that not \emph{all} Batanin $n$-categories can be achieved in this way; we will discuss this issue more at the end of the Introduction.

To complete our main result we need a cartesian natural transformation
\[Q^{(n)} \Rightarrow T^{(n)}\]
for each $n \geq 0$; this is induced by the canonical operad morphisms from each $P_i$ to the terminal operad.  So we have half of our main result: we have for each $n \geq 0$ a globular operad whose algebras are the $n$-categories we started with.  In Proposition~\ref{propositionfourpointfive} we show that the formula obtained by this abstract argument is indeed the one we first thought of by considering the entries in Table~\ref{bigtable}.  This is useful for the purposes of satisfying our intuition, but is also useful to prove the rest of the result: that the contractibility of $Q^{(n)}$ corresponds to the contractibility of the $P_i$.

In Section~\ref{fourpointtwo} we briefly discuss the unravelling of the above inductive argument.  We find that we have for each $0 \leq i < n$ a monad $Q^{(n)}_i$ for ``composition along bounding $i$-cells''. That is, we can isolate each sort of composition and build it freely, parametrised by the relevant operad $P_{n-i-1}$. We then find that we have for each $n \geq 3$ a ``distributive series of monads'' as in \cite{che17}
\[Q^{(n)}_0, \ldots, Q^{(n)}_{n-1}\]
which is exactly analogous to the distributive series of monads giving \emph{strict} $n$-categories.

In Section~\ref{five} we apply the results of the rest of the work to Trimble's original definition (that is, involving his operad $E$), with the aim of relating it to the operad Batanin uses to take fundamental groupoids of a space.  The idea is that any topological space $X$ has a natural underlying globular set $GX$ whose
\begin{itemize}
\item 0-cells are the points of $X$
\item 1-cells are the paths of $X$
\item 2-cells are the homotopies between paths of $X$
\item 3-cells are the homotopies between homotopies between paths of $X$\\[4pt]
\hspace*{2em}$\vdots$
\end{itemize}
and that these should be the cells of the fundamental $\omega$-groupoid of $X$; for the fundamental $n$-groupoid we need to take homotopy classes at dimension $n$.  In order to exhibit this as an $n$-groupoid we first need to equip it with the structure of an $n$-category.  In Batanin's theory, this means we must find a contractible globular operad for which this globular set is an algebra.  (We will not be concerned with showing it is an $n$-groupoid here.)

Batanin constructs a contractible globular operad $K$ with a canonical action on the underlying globular set of any space.  Given an $m$-pasting diagram $\alpha$ he defines the operations of $K$ of arity $\alpha$ to be the continuous, boundary-preserving maps from the topological $m$-disc to the geometric realisation of $\alpha$.  These can be thought of as higher-dimensional reparametrising maps -- exactly the maps we would need to turn a pasting diagram of cells in $GX$ back into a single cell of $X$.

So in Section~\ref{five} we consider the following process.
\begin{enumerate}
\item Start with Trimble's topological operad $E$.
\item Use $E$ to make Trimble's original iterative operadic theory of $n$-categories.
\item Apply the constructions of Section~\ref{four} to produce an associated globular operad for this theory.
\item Embed this operad in Trimble's operad $K$ for fundamental $n$-groupoids.
\end{enumerate}
The aim of Section~\ref{five}, then, is to construct this embedding.  This works, essentially, by taking Trimble's linear reparametrising maps
\[[1] \lra [k]\]
and letting them act naturally on $m$-cubes; we then quotient the cubes to form $m$-discs.  Constructing an operad morphism to $K$ is then straightforward; we will not prove here that it is an embedding, but study its properties in a future work.

Note that throughout this work we will take the natural numbers \bb{N} to include 0.

\subsection*{Remarks on weak vs strict interchange}

In the definition of a bicategory as a category enriched in categories, the interchange law corresponds to the functoriality of the composition functor.  The same is true in general in an $n$-category defined by enrichment, and thus the strictness of interchange corresponds to the strictness of the functors we are using in our enrichment.

Trimble's definition only defines \emph{strict} functors between $n$-categories, and thus the definition may be thought of as having strict interchange laws, although they are parametrised in a slightly subtle way (see end of Section~\ref{fourpointtwo} ).  This might be considered to be ``too strict'' and indeed until recently attention was focused on ``fully weak'' definitions.  Trimble's intention was explicitly \emph{not} to define the most weak possible notion of $n$-category; he called his version ``flabby $n$-categories" rather than weak ones.  Rather, his stated aim was a definition that would \emph{naturally} yield fundamental $n$-groupoids of spaces, and his definition certainly achieves that -- the fundamental $n$-groupoid functor is an inherent part of the definition.

With regard to interchange, the point is that fundamental $n$-groupoids \emph{do} have strict interchange (see for example \cite{lei7}).  They do not, however, have strict units.  While strict 2-groupoids do model homotopy 2-types, Simpson proved in \cite{sim3} that strict 3-groupoids are too strict to model homotopy 3-types; in the same work he conjectures that having weak units is weak enough to model $n$-types for all $n$.  This conjecture has been proved at dimension 3 by Joyal and Kock in \cite{jk1}.  All this indicates that the combination of ``strict interchange and weak units'' is worth studying.

Furthermore, this is related to the question of coherence.  While not every weak 3-category is equivalent to a completely strict one, the coherence theorem of \cite{gps1} tells us that every weak 3-category is equivalent to a Gray-category, which can be thought of as a semi-strict 3-category in which everything is strict \emph{except} interchange.  It is generally believed that an analogous result should be true for higher dimensions.  However, there is a different sort of semi-strict $n$-category whose candidacy for a coherence theorem is effectively highlighted and supported by the work of Joyal and Kock: that is, a semi-strict $n$-category in which everything is strict \emph{except units}.  They have already proved in \cite{jk1} that for $n = 3$ this is enough to produce braided monoidal categories in the doubly degenerate case, which is effectively the content of the coherence theorem for 3-categories.

Iterative operadic $n$-categories, then, may be thought of as this latter form of semi-strict $n$-category.  As such they should prove useful for the study of both homotopy types and coherence.

\subsection*{Acknowledgements}

Much of this work was completed with the help of the very conducive research environments of the Laboratoire J. A. Dieudonn\'e at the Universit\'e de Nice Sophia-Antipolis, and the Fields Institute, Toronto, for which I am very grateful. I would also like to thank Andr\'e Joyal and Tom Leinster for useful discussions.

\section{Trimble-like theories of $n$-category}\label{one}

We begin this section with a concise account of Trimble's original definition of $n$-category as presented in \cite{lei7}.  Then, we propose a generalisation of this theory which allows for the use of operads other than Trimble's original operad $E$.  This more general framework will also serve as further explanation of Trimble's original definition.  It is this generalisation that we will refer to as ``Trimble-like'', or iterative operadic.

\subsection{Trimble's original definition}\label{trimbleorig}\label{onepointone}

Trimble's definition of $n$-category proceeds by iterated enrichment.  It is well-known that strict $n$-categories can be defined by repeated enrichment, that is, for $n \geq 1$, a strict $n$-category is precisely a category enriched in strict $(n-1)$-categories.  However, for \emph{weak} $n$-categories a notion of ``weak enrichment'' is required.

Trimble weakens the notion of enrichment using an operad action.  The operad he uses is a specific one, which we now define.

\begin{mydefinition}\label{operade}

We define the (classical, non-symmetric) operad $E$ in \cat{Top} by setting $E(k)$ to be the space of continuous endpoint-preserving maps
    \[ [0,1] \lra [0,k] .\]
for each $k \geq 0$.  We will write the closed interval $[0,k]$ as $[k]$, and we will often write $[0,1]$ as $I$. The composition maps
\[E(m) \times E(k_1) \times \cdots \times E(k_m) \lra E(k_1 + \cdots + k_m) \]
are given by reparametrisation and the unit is given by the identity map $[1] \lra [1]$ in $E(1)$.
\end{mydefinition}

\begin{myremarks}  The operad $E$ defined above has the following two crucial properties.

\begin{enumerate}

\item $E$ has a natural action on path spaces.  That is, given a space $X$ and points $x_0, \ldots, x_k \in X$ we have a canonical map
    \[E(k) \times X(x_{k-1}, x_k) \times \cdots \times X(x_0, x_1) \lra X(x_0, x_k)\]
    compatible with the operad composition.  Note that this action will be crucial for making the induction step in the definition.

\item For each $k \geq 0$, the space $E(k)$ is contractible.  This is what will give \emph{coherence} for the $n$-categories we define; however from a technical point of view the induction will not depend on this property of $E$.

\end{enumerate}
\end{myremarks}

We are now ready to make the definition.  We will simultaneously define, for each $n \geq 0$
\begin{itemize}
\item a finite product category \cat{$n$-Cat} of weak $n$-categories, and
\item a product-preserving functor $\Pi_n: \cat{Top} \lra \cat{$n$-Cat}$, which is intended to be the fundamental $n$-groupoid functor.
\end{itemize}
The idea is to use the fundamental $n$-groupoid of each $E(k)$ to parametrise $k$-ary composition in an $(n+1)$-category.

\begin{mydefinition}
First set $\cat{$0$-Cat}=\cat{Set}$ and define $\Pi_0$ to be the functor
\[\cat{Top} \lra \cat{Set}\]
which sends a space $X$ to its set of connected components.  Observe that $\Pi_0$ preserves products.

For $n \geq 1$ a \demph{weak $n$-category} $A$ consists of
\begin{itemize}
   \item objects: a set $A_0$,
   \item hom-$(n-1)$-categories: for all $a,a'\in A_0$ an $(n-1)$-category $A(a,a')$, and
   \item $k$-ary composition: for all $k\geq 0$ and $a_0,\dots ,a_k\in
   A_0$, an $(n-1)$-functor
\[
   \gamma :\Pi_{n-1}\big(E(k)\big)\times A(a_{k-1},a_k) \times \cdots \times A(a_0,a_1)
   \lra A(a_0,a_k)
\]
\end{itemize}
compatible with the operadic composition of $E$.

Given such $n$-categories $A$ and $B$, an \demph{$n$-functor} (or just \demph{functor}) $A \map{F} B$ consists of
\begin{itemize}
   \item  on objects: a function $F : A_0 \lra B_0$, and
   \item  on morphisms: for all $a,a'\in A_0$ an
   $(n-1)$-functor
   \[
      F: A(a,a') \lra B(Fa,Fa')
   \]
\end{itemize}
satisfying ``functoriality" --- for all $k\geq 0$ and $a_0,\ldots
,a_k \in A_0$ the following diagram commutes:
\[\xy
(-20,10)*+{\Pi_{n-1}\big(E(k)\big)\times A(a_{k-1},a_k)\times
\cdots \times A(a_{0},a_1)}="tl"; (40,10)*+{A(a_0,a_k)}="tr";
(-20,-10)*+{\Pi_{n-1}\big(E(k)\big)\times B(Fa_{k-1},Fa_k)\times
\cdots \times B(Fa_{0},Fa_1)}="bl";
(40,-10)*+{B(Fa_0,Fa_k).}="br";
{\ar^>>>>>>>>{\gamma} "tl";"tr"};
{\ar_{1 \times F \times \cdots \times F} "tl";"bl"};
{\ar^{F} "tr";"br"};
{\ar^>>>>>{\gamma} "bl";"br"};
\endxy
\]

\noindent We write \cat{$n$-Cat} for the category of weak $n$-categories and their functors and observe that it has finite products.

We also define a functor
   \[\Pi_n: \cat{Top} \lra \cat{$n$-Cat}\]
as follows.  Given a space $X$ we define an $n$-category $\Pi_n X$ by
\begin{itemize}
   \item objects: $(\Pi_n X)_0$ is the underlying set of $X$,
   \item hom-$(n-1)$-categories: $(\Pi_n X)(x,x')=\Pi_{n-1}(X(x,x'))$, and
   \item composition: we use the action of $E$ on path spaces and
   the fact that $\Pi_{n-1}$ preserves products to make the following composition functor
   \[\xy
(0,15)*+{\Pi_{n-1}\big(E(k)\big)\times\Pi_{n-1}\big(X(x_{k-1},x_k)\big)\times
\cdots \times \Pi_{n-1}\big(X(x_{0},x_1)\big)}="tl";
(0,0)*+{\Pi_{n-1}\big(E(k) \times X(x_{k-1},x_k)\times \cdots
\times X(x_{0},x_1)\big)}="bl";
(0,-20)*+{\Pi_{n-1}\big(X(x_0,x_k)\big)}="br";
{\ar^<<<<<{\iso \mbox{\upshape \textsf{\ \ \ $\Pi_{n-1}$ preserves products}}}"tl";"bl"};
{\ar^{\mbox{\upshape \ \ \textsf{$\Pi_{n-1}$ of the action of $E$
on path spaces}}} "bl";"br"};
\endxy.
\]

\end{itemize}
The action of $\Pi_n$ on morphisms is defined in the obvious way.  Finally observe that $\Pi_n$ preserves products, so the induction goes through.
\end{mydefinition}

\begin{myremarks} \hspace*{1em}
\begin{enumerate}
\item Note that the functors defined here are ``strict functors'', so the enrichment gives strict interchange even though everything else about the definition is weak.
\item Some further explanation about compatibility with operad operations will be given in the next section.
\end{enumerate}
\end{myremarks}

\subsection{A more general version of Trimble's definition}\label{onepointtwo}

Trimble's definition relies on use of the topological operad $E$, but in fact the operads used to parametrise composition are the operads $\Pi_n(E)$ given by
\[\Pi_n(E)(k) = \Pi_n(E(k)) \in \cat{$n$-Cat}.\]
The fact that the functor $\Pi_n : \cat{Top} \lra \cat{$n$-Cat}$ preserves products ensures that $\Pi_n$ takes operads to operads (since this is true of any lax monoidal functor), so $\Pi_n(E)$ defined in this way is indeed an operad.

In this section we define an ``iterative operadic theory of $n$-categories'' to be given by, for all $n$, a category \cat{$n$-Cat} of $n$-categories and an operad $P_n$ in \cat{$n$-Cat}, such that \cat{$(n+1)$-Cat} is the category of ``categories enriched in \cat{$n$-Cat} weakened by $P_n$''.   In fact we will make a general definition of ``$(\cl{V},P)$-category'' where \cl{V} is the monoidal category in which we are enriching, and $P$ is an operad in \cl{V} which we are using to parametrise composition.  This is also called a ``categorical $P$-algebra'' or ``$P$-category'' \cite[Section 10.1]{lei8}, and can be thought of as a cross between a \cl{V}-category and a $P$-algebra; we will see that it generalises both of these notions.

\begin{mydefinition}

Given a category \cl{V}, a \demph{\cl{V}-graph} $A$ is given by

\begin{itemize}
\item a set $A_0$ of objects, and
\item for every pair of objects $a,a'$, a hom-object $A(a,a') \in \cl{V}$.
\end{itemize}
A morphism $F: A \lra B$ of \cl{V}-graphs is given by

\begin{itemize}
\item a function $F:A_0 \lra B_0$, and
\item for every pair of objects $a,a'$, a morphism $A(a,a') \lra B(Fa, Fa') \in \cl{V}$.
\end{itemize}
\cl{V}-graphs and their morphisms form a category \cat{\cl{V}-Gph}.
\end{mydefinition}

\begin{myremark}
Note that \cat{\cl{V}-Gph} inherits many of the properties of \cl{V}.  We will use the fact that it is cartesian if \cl{V} is, with pullbacks given componentwise.
\end{myremark}

\begin{mydefinition}
Let \cl{V} be a symmetric monoidal category and $P$ an operad in \cl{V}.  A \demph{$(\cl{V},P)$-category} $A$ is given by
\begin{itemize}
\item a \cl{V}-graph $A$, equipped with
\item for all $k \geq 0$ and $a_0, \ldots, a_k \in A_0$ a composition morphism
\[\gamma : P(k) \otimes A(a_{k-1}, a_k) \otimes \cdots \otimes A(a_0, a_1) \lra A(a_0, a_k)\]
\end{itemize}
in \cl{V}, compatible with the composition of the operad.  Note that composition for the case $k=0$ is to be interpreted as, for all $a \in A_0$ a morphism
\[P(0) \lra A(a,a).\]

\noindent A \demph{morphism $F:A \lra B$ of $(\cl{V},P)$-categories} is a morphism of the underlying \cl{V}-graphs such that the following diagram commutes
%
\[\xy
(0,25)*+{P(k) \otimes A(a_{k-1}, a_k) \otimes \cdots \otimes A(a_0,a_1)}="1";
(60,25)*+{A(a_0, a_k)}="2";
(0,0)*+{P(k) \otimes B(Fa_{k-1}, Fa_k) \otimes \cdots \otimes B(Fa_0,Fa_1)}="3";
(60,0)*+{B(Fa_0, Fa_k)}="4";
{\ar^>>>>>>>>>>>>{\gamma} "1";"2"};
{\ar^{F} "2";"4"};
{\ar_{1 \otimes F \otimes \cdots \otimes F} "1";"3"};
{\ar^>>>>>>>>{\gamma} "3";"4"};
\endxy\]
Then $(\cl{V},P)$-categories and their morphisms form a category \cat{$(\cl{V},P)$-Cat}.
\end{mydefinition}

\noindent Note that the compatibility condition in the definition of $(\cl{V},P)$-category can be sketched in pictures as follows.  We represent elements of the operad as operations with multiple inputs and one output; for example an element of $P(3)$ is represented as:
\[\xy
(13.5,40)*{}="11";
(26.5,40)*{}="12";
{\ar@{-}^{} "11";"12"};
(20,34.8)*{}="16";
(32.5,35)*{}="17";
(20,33.5)*{}="16b";
{\ar@{-}^{} "11";"16"};
{\ar@{-}^{} "12";"16"};
(15,41.5)*{}="22";
(15,40)*{}="23";
(20,41.5)*{}="24";
(20,40)*{}="25";
(25,41.5)*{}="26";
(25,40)*{}="27";
{\ar@{-}^{} "22";"23"};
{\ar@{-}^{} "24";"25"};
{\ar@{-}^{} "26";"27"};
{\ar@{-}^{} "16";"16b"};
\endxy\]
Operadic composition is then represented by:
\[\xy
{\ar@{|->}^{} (30,65);(40,65)};
(0,60)*
{\xy   
(3.5,40)*{}="9";
(11.5,40)*{}="10";
(13.5,40)*{}="11";
(26.5,40)*{}="12";
(28.5,40)*{}="13";
(36.5,40)*{}="14";
{\ar@{-}^{} "9";"10"};
{\ar@{-}^{} "11";"12"};
{\ar@{-}^{} "13";"14"};
(7.5,35)*{}="15";
(20,34.8)*{}="16";
(32.5,35)*{}="17";
(7.5,33.5)*{}="15b";
(20,33.5)*{}="16b";
(32.5,33.5)*{}="17b";
(7.5,32)*{}="15c";
(20,32)*{}="16c";
(32.5,32)*{}="17c";
(7.5,30.5)*{}="15d";
(20,30.5)*{}="16d";
(32.5,30.5)*{}="17d";
{\ar@{-}^{} "9";"15"};
{\ar@{-}^{} "10";"15"};
{\ar@{-}^{} "11";"16"};
{\ar@{-}^{} "12";"16"};
{\ar@{-}^{} "13";"17"};
{\ar@{-}^{} "14";"17"};
(5,41.5)*{}="18";
(5,40)*{}="19";
(10,41.5)*{}="20";
(10,40)*{}="21";
(15,41.5)*{}="22";
(15,40)*{}="23";
(20,41.5)*{}="24";
(20,40)*{}="25";
(25,41.5)*{}="26";
(25,40)*{}="27";
(30,41.5)*{}="28";
(30,40)*{}="29";
(35,41.5)*{}="30";
(35,40)*{}="31";
{\ar@{-}^{} "18";"19"};
{\ar@{-}^{} "20";"21"};
{\ar@{-}^{} "22";"23"};
{\ar@{-}^{} "24";"25"};
{\ar@{-}^{} "26";"27"};
{\ar@{-}^{} "28";"29"};
{\ar@{-}^{} "30";"31"};
{\ar@{-}^{} "15";"15b"};
{\ar@{-}^{} "16";"16b"};
{\ar@{-}^{} "17";"17b"};
{\ar@{-}^{} "15c";"15d"};
{\ar@{-}^{} "16c";"16d"};
{\ar@{-}^{} "17c";"17d"};
(3.5,30.5)*{}="32";
(36.5,30.5)*{}="33";
(20,15)*{}="34";
(20,13.5)*{}="34b";
{\ar@{-}^{} "32";"33"};
{\ar@{-}^{} "32";"34"};
{\ar@{-}^{} "33";"34"};
{\ar@{-}^{} "34";"34b"};
(0,45)*{}="35";
(40,45)*{}="36";
(40,11)*{}="37";
(0,11)*{}="38";
{\ar@{-}^{} "35";"36"};
{\ar@{-}^{} "36";"37"};
{\ar@{-}^{} "37";"38"};
{\ar@{-}^{} "38";"35"};
\endxy}; 
(70,60)*
{\xy      
(3.5,40)*{}="9";
(11.5,40)*{}="10";
(13.5,40)*{}="11";
(26.5,40)*{}="12";
(28.5,40)*{}="13";
(36.5,40)*{}="14";
(20,15)*{}="34";
(20,13.5)*{}="34b";
{\ar@{-}^{} "9";"14"};
{\ar@{-}^{} "9";"34"};
{\ar@{-}^{} "14";"34"};
(5,41.5)*{}="18";
(5,40)*{}="19";
(10,41.5)*{}="20";
(10,40)*{}="21";
(15,41.5)*{}="22";
(15,40)*{}="23";
(20,41.5)*{}="24";
(20,40)*{}="25";
(25,41.5)*{}="26";
(25,40)*{}="27";
(30,41.5)*{}="28";
(30,40)*{}="29";
(35,41.5)*{}="30";
(35,40)*{}="31";
{\ar@{-}^{} "18";"19"};
{\ar@{-}^{} "20";"21"};
{\ar@{-}^{} "22";"23"};
{\ar@{-}^{} "24";"25"};
{\ar@{-}^{} "26";"27"};
{\ar@{-}^{} "28";"29"};
{\ar@{-}^{} "30";"31"};
{\ar@{-}^{} "34";"34b"};
(0,45)*{}="35";
(40,45)*{}="36";
(40,11)*{}="37";
(0,11)*{}="38";
{\ar@{-}^{} "35";"36"};
{\ar@{-}^{} "36";"37"};
{\ar@{-}^{} "37";"38"};
{\ar@{-}^{} "38";"35"};
\endxy}; 
\endxy
\]
We represent elements of $A(a,a')$ as arrows, and elements of \[A(a_{k-1}, a_k) \otimes \cdots \otimes A(a_0, a_1)\] as strings of composable arrows
\[\xy
(5,43)*{\rightarrow};
(10,43)*{\rightarrow};
(15,43)*{\rightarrow};
(20,43)*{\rightarrow};
(25,43)*{\rightarrow};
(30,43)*{\rightarrow};
(35,43)*{\rightarrow};
\endxy\]
without specifically labelling the endpoints $a_i$.  Then the composition in our $(\cl{V}, P)$-category $A$ is represented by:
\[\xy
{\ar@{|->}^{} (30,5);(40,5)};
(0,0)*
{\xy      
(5,43)*{\rightarrow};
(10,43)*{\rightarrow};
(15,43)*{\rightarrow};
(20,43)*{\rightarrow};
(25,43)*{\rightarrow};
(30,43)*{\rightarrow};
(35,43)*{\rightarrow};
(3.5,40)*{}="9";
(11.5,40)*{}="10";
(13.5,40)*{}="11";
(26.5,40)*{}="12";
(28.5,40)*{}="13";
(36.5,40)*{}="14";
(20,15)*{}="34";
(20,13.5)*{}="34b";
{\ar@{-}^{} "9";"14"};
{\ar@{-}^{} "9";"34"};
{\ar@{-}^{} "14";"34"};
(5,41.5)*{}="18";
(5,40)*{}="19";
(10,41.5)*{}="20";
(10,40)*{}="21";
(15,41.5)*{}="22";
(15,40)*{}="23";
(20,41.5)*{}="24";
(20,40)*{}="25";
(25,41.5)*{}="26";
(25,40)*{}="27";
(30,41.5)*{}="28";
(30,40)*{}="29";
(35,41.5)*{}="30";
(35,40)*{}="31";
{\ar@{-}^{} "18";"19"};
{\ar@{-}^{} "20";"21"};
{\ar@{-}^{} "22";"23"};
{\ar@{-}^{} "24";"25"};
{\ar@{-}^{} "26";"27"};
{\ar@{-}^{} "28";"29"};
{\ar@{-}^{} "30";"31"};
{\ar@{-}^{} "34";"34b"};
(0,47)*{}="35";
(40,47)*{}="36";
(40,11)*{}="37";
(0,11)*{}="38";
{\ar@{-}^{} "35";"36"};
{\ar@{-}^{} "36";"37"};
{\ar@{-}^{} "37";"38"};
{\ar@{-}^{} "38";"35"};
\endxy}; 
(70,0)*
{\xy    
(2.5,14)*{}="43";
(37.5,14)*{}="44";
{\ar^{} "43";"44"};
(0,47)*{}="35";
(40,47)*{}="36";
(40,11)*{}="37";
(0,11)*{}="38";
{\ar@{-}^{} "35";"36"};
{\ar@{-}^{} "36";"37"};
{\ar@{-}^{} "37";"38"};
{\ar@{-}^{} "38";"35"};
\endxy}; 
\endxy
\]
showing that we take a string of $k$ composable arrows together with an element of $P(k)$ and produce a single arrow as a result.  Then the compatibility is represented by the commutativity of the following diagram:
\[\xy
{\ar@{|->}^{\gamma} (30,65);(40,65)};   
{\ar@{|->}_{\gamma} (30,5);(40,5)};    
{\ar@{|->}_{\mbox{\textsf{composition in $P$}}} (0,40);(0,30)};   
{\ar@{|->}^{\gamma} (70,40);(70,30)};  
(0,60)*
{\xy   
(5,43)*{\rightarrow};
(10,43)*{\rightarrow};
(15,43)*{\rightarrow};
(20,43)*{\rightarrow};
(25,43)*{\rightarrow};
(30,43)*{\rightarrow};
(35,43)*{\rightarrow};
(3.5,40)*{}="9";
(11.5,40)*{}="10";
(13.5,40)*{}="11";
(26.5,40)*{}="12";
(28.5,40)*{}="13";
(36.5,40)*{}="14";
{\ar@{-}^{} "9";"10"};
{\ar@{-}^{} "11";"12"};
{\ar@{-}^{} "13";"14"};
(7.5,35)*{}="15";
(20,34.8)*{}="16";
(32.5,35)*{}="17";
(7.5,33.5)*{}="15b";
(20,33.5)*{}="16b";
(32.5,33.5)*{}="17b";
(7.5,32)*{}="15c";
(20,32)*{}="16c";
(32.5,32)*{}="17c";
(7.5,30.5)*{}="15d";
(20,30.5)*{}="16d";
(32.5,30.5)*{}="17d";
{\ar@{-}^{} "9";"15"};
{\ar@{-}^{} "10";"15"};
{\ar@{-}^{} "11";"16"};
{\ar@{-}^{} "12";"16"};
{\ar@{-}^{} "13";"17"};
{\ar@{-}^{} "14";"17"};
(5,41.5)*{}="18";
(5,40)*{}="19";
(10,41.5)*{}="20";
(10,40)*{}="21";
(15,41.5)*{}="22";
(15,40)*{}="23";
(20,41.5)*{}="24";
(20,40)*{}="25";
(25,41.5)*{}="26";
(25,40)*{}="27";
(30,41.5)*{}="28";
(30,40)*{}="29";
(35,41.5)*{}="30";
(35,40)*{}="31";
{\ar@{-}^{} "18";"19"};
{\ar@{-}^{} "20";"21"};
{\ar@{-}^{} "22";"23"};
{\ar@{-}^{} "24";"25"};
{\ar@{-}^{} "26";"27"};
{\ar@{-}^{} "28";"29"};
{\ar@{-}^{} "30";"31"};
{\ar@{-}^{} "15";"15b"};
{\ar@{-}^{} "16";"16b"};
{\ar@{-}^{} "17";"17b"};
{\ar@{-}^{} "15c";"15d"};
{\ar@{-}^{} "16c";"16d"};
{\ar@{-}^{} "17c";"17d"};
(3.5,30.5)*{}="32";
(36.5,30.5)*{}="33";
(20,15)*{}="34";
(20,13.5)*{}="34b";
{\ar@{-}^{} "32";"33"};
{\ar@{-}^{} "32";"34"};
{\ar@{-}^{} "33";"34"};
{\ar@{-}^{} "34";"34b"};
(0,47)*{}="35";
(40,47)*{}="36";
(40,11)*{}="37";
(0,11)*{}="38";
{\ar@{-}^{} "35";"36"};
{\ar@{-}^{} "36";"37"};
{\ar@{-}^{} "37";"38"};
{\ar@{-}^{} "38";"35"};
\endxy}; 
(70,60)*
{\xy     
(7.5,32)*{}="15c";
(20,32)*{}="16c";
(32.5,32)*{}="17c";
(7.5,30.5)*{}="15d";
(20,30.5)*{}="16d";
(32.5,30.5)*{}="17d";
{\ar@{-}^{} "15c";"15d"};
{\ar@{-}^{} "16c";"16d"};
{\ar@{-}^{} "17c";"17d"};
(3.5,30.5)*{}="32";
(36.5,30.5)*{}="33";
(20,15)*{}="34";
(20,13.5)*{}="34b";
{\ar@{-}^{} "32";"33"};
{\ar@{-}^{} "32";"34"};
{\ar@{-}^{} "33";"34"};
{\ar@{-}^{} "34";"34b"};
(2.5,34)*{\ }="39";
(12.5,34)*{\ }="40";
(27.5,34)*{\ }="41";
(37.5,34)*{\ }="42";
{\ar^{} "39";"40"};
{\ar^{} "40";"41"};
{\ar^{} "41";"42"};
(0,47)*{}="35";
(40,47)*{}="36";
(40,11)*{}="37";
(0,11)*{}="38";
{\ar@{-}^{} "35";"36"};
{\ar@{-}^{} "36";"37"};
{\ar@{-}^{} "37";"38"};
{\ar@{-}^{} "38";"35"};
\endxy};  
(0,0)*
{\xy      
(5,43)*{\rightarrow};
(10,43)*{\rightarrow};
(15,43)*{\rightarrow};
(20,43)*{\rightarrow};
(25,43)*{\rightarrow};
(30,43)*{\rightarrow};
(35,43)*{\rightarrow};
(3.5,40)*{}="9";
(11.5,40)*{}="10";
(13.5,40)*{}="11";
(26.5,40)*{}="12";
(28.5,40)*{}="13";
(36.5,40)*{}="14";
(20,15)*{}="34";
(20,13.5)*{}="34b";
{\ar@{-}^{} "9";"14"};
{\ar@{-}^{} "9";"34"};
{\ar@{-}^{} "14";"34"};
(5,41.5)*{}="18";
(5,40)*{}="19";
(10,41.5)*{}="20";
(10,40)*{}="21";
(15,41.5)*{}="22";
(15,40)*{}="23";
(20,41.5)*{}="24";
(20,40)*{}="25";
(25,41.5)*{}="26";
(25,40)*{}="27";
(30,41.5)*{}="28";
(30,40)*{}="29";
(35,41.5)*{}="30";
(35,40)*{}="31";
{\ar@{-}^{} "18";"19"};
{\ar@{-}^{} "20";"21"};
{\ar@{-}^{} "22";"23"};
{\ar@{-}^{} "24";"25"};
{\ar@{-}^{} "26";"27"};
{\ar@{-}^{} "28";"29"};
{\ar@{-}^{} "30";"31"};
{\ar@{-}^{} "34";"34b"};
(0,47)*{}="35";
(40,47)*{}="36";
(40,11)*{}="37";
(0,11)*{}="38";
{\ar@{-}^{} "35";"36"};
{\ar@{-}^{} "36";"37"};
{\ar@{-}^{} "37";"38"};
{\ar@{-}^{} "38";"35"};
\endxy}; 
(70,0)*
{\xy    
(2.5,14)*{}="43";
(37.5,14)*{}="44";
{\ar^{} "43";"44"};
(0,47)*{}="35";
(40,47)*{}="36";
(40,11)*{}="37";
(0,11)*{}="38";
{\ar@{-}^{} "35";"36"};
{\ar@{-}^{} "36";"37"};
{\ar@{-}^{} "37";"38"};
{\ar@{-}^{} "38";"35"};
\endxy}; 
\endxy
\]

\vspace{2em}
\begin{myexamples} \mbox{\ }

\begin{enumerate}

\item Put $\cl{V} = \cat{Set}$ and $P=1$ the terminal operad i.e. each $P(k)=1$.  Then a $(\cl{V}, P)$-category is just an ordinary small category.

\item Let $A$ be a $(\cl{V}, P)$-category with only one object $*$, thus only one hom-object $A(*,*) \in \cl{V}$ which by abuse of notation we write as $A$.  Then  the composition morphism for each $k$ becomes a morphism
\[P(k) \otimes A^{\otimes k} \lra A\]
and the axioms show precisely that $A$ is an algebra for the operad $P$.

\item Let $P$ be the operad defined by putting each $P(k)=I$ with composition given by the unique coherence isomorphisms in \cl{V}.  Then \cat{$(\cl{V},P)$-Cat} is equivalent to \cat{\cl{V}-Cat}, the usual category of categories enriched in \cl{V}.

\item Put $\cl{V} = \cat{Top}$ and take $P$ to be the operad $E$ used in the previous section (Definition~\ref{operade}).  Then a $(\cl{V},P)$-category can be thought of as a ``category weakly enriched in spaces''.  Any topological space is naturally a $(\cl{V},P)$-category \cite[Example 5.1.10]{lei8}.  In fact this is the precise formulation of the fact that ``$E$ has a natural action on path spaces'', and lies at the heart of why Trimble's definition of $n$-category seems natural for modelling homotopy types.

\end{enumerate}
\end{myexamples}

\begin{myremark} Note that examples (2) and (3) above show how the notion of a $(\cl{V},P)$-category is a generalisation and conflation of the notions of \cl{V}-category and $P$-algebra.  In fact, $(\cl{V},P)$-categories are precisely the algebras for a related generalised operad $\Sigma P$, as we will show in Section~\ref{twopointtwo}.
\end{myremark}

In all our examples, the tensor product in \cl{V} will be given by a categorical product; in this case \cat{$(\cl{V},P)$-Cat} also has products, and we can then iterate the construction.  This iteration gives us a candidate notion of $n$-category; it only remains to have a way of saying that such $n$-categories are ``sensible'' or coherent.  For this we will use the notion of contractibility of an operad; since we are iterating our constructions, we also need the notion of contractibility of a $(\cl{V},P)$-category.

\begin{mydefinition} \hspace*{1em}

\begin{enumerate}

\item We say that a set is \demph{contractible} if and only if it is terminal.

\item  Suppose \cl{V} is a category with a notion of contractibility, that is, we know what it means for an object of \cl{V} to be ``contractible''.  Then we say that an operad $P \in \cl{V}$ is \demph{contractible} if each $P(k)$ is contractible.

\item Suppose \cl{V} has a notion of contractibility.  Then we say a $\cl{V}$-graph is \demph{contractible} if

\begin{itemize}
\item $A_0 \neq \emptyset$, and
\item for all $a,a' \in A_0$, the hom-object $A(a,a')$ is contractible in \cl{V}.
\end{itemize}

\item We say a $(\cl{V},P)$-category is \demph{contractible} if its underlying \cl{V}-graph is contractible.

\end{enumerate}
\end{mydefinition}

\begin{myremarks}\label{remtwo} \hspace*{1em}

\begin{enumerate}

\item Note that by starting our inductive definition with the 1-element sets, we ensure that in any contractible $n$-category every homset of $n$-cells with given source and target is a 1-element set. Since we will use contractible $n$-categories to parametrise composition in an $(n+1)$-category, this is what will ensure that composition of top-dimensional cells is always strict (see Proposition~\ref{propcoh}).

\item Note that elsewhere (for example \cite{may1}) ``contractible'' is taken to mean ``weakly equivalent to the terminal object'' in a suitable model category structure; we do not address the use of model categories here.
\end{enumerate}
\end{myremarks}

We are now ready to iterate the weak enrichment construction to make $n$-categories. We are not claiming here to have made a ``new'' definition of $n$-category, nor to have improved on Trimble's remarkably elegant and concise definition.  We state the definition in the above form merely because this is the form in which we are going to use it, and we prefer to show the greatest generality in which our comparison theorem might be applied.

\begin{mydefinition}

An \demph{iterative operadic theory of $n$-categories} is given by, for all $n \geq 0$ a category $\cl{V}_n$ and a contractible operad $P_n \in \cl{V}_n$ such that

\begin{itemize}
\item $\cl{V}_0 = \cat{Set}$, and
\item for all $n \geq 0$, $\cl{V}_{n+1} = \cat{$(\cl{V}_n, P_n)$-Cat}$.
\end{itemize}
Thus $\cl{V}_n$ is the category of $n$-categories according to the theory in question.

\end{mydefinition}

Note that putting $\cl{V}_0 = \cat{Set}$ and demanding that $P_0$ be contractible means that we must have $P_0 =1$ the terminal operad.

\numarabic

\begin{myexamples} \hspace*{1em}

\begin{enumerate}

\item Trimble's original definition is an example of such a theory of $n$-categories, with $P_n = \Pi_n (E)$ for each $n \geq 0$.  The contractibility of each $E(k)$ in the topological sense ensures the contractibility of each $P_n$ in our sense.

\item Another example of such a theory is given in \cite{cg1} in which the authors propose a version of Trimble's original definition beginning with an operad in \cat{GSet} instead of in \cat{Top}.  The authors use Trimble's inductive method to produce a series of operads $P_n$ in \cat{$n$-Cat}, and they present sufficient conditions on an operad in \cat{GSet} to make the induction step work.

\item Batanin's definition of $n$-category (and variants) is a non-example.  We will see in Section~\ref{fourpointtwo} that an iterative operadic theory of $n$-categories necessarily has strict (albeit parametrised) interchange at all levels, whereas Batanin's $n$-categories allow for the possibility of weak interchange.  Thus although our main theorem will show that an iterative operadic theory can be expressed as a Batanin-type theory of $n$-categories, the converse is not true.

\item We could alternatively start with $\cl{V}_0 = \cat{Top}$ or \cat{sSet} which would give a candidate for a notion of $\omega$-category in which all cells are invertible above dimension $n$; such a structure is sometimes referred to as an $(\infty,n)$-category.   We could also start with other suitable monoidal categories with a notion of contractibility.  Note that in these cases it is not immediate that the rest of the constructions in this work will follow; we will study this in a future work.

\end{enumerate}
\end{myexamples}

%

\section{Definition via free \cl{V}-categories}\label{two}

In the previous section we demonstrated informally that a $(\cl{V},P)$-category is a ``cross'' between a $P$-algebra and a \cl{V}-category.  We now show how to derive from an operad $P$ a \emph{generalised operad} $\Sigma P$, whose algebras are precisely the $(\cl{V},P)$-categories.  This will enable the constructions of Section~\ref{four}.


\subsection{Generalised operads}\label{twopointone}

We first rapidly recall the definition of generalised operad.  The idea is to generalise the sorts of arities that the operations of an operad can have.  For a classical operad, the arities are just the natural numbers. Observing that \bb{N} is the free monoid on the terminal set, we may try applying other monads $T$ to terminal objects in other categories \cl{E}, and this works provided \cl{E} and $T$ are cartesian.  This generalisation opens up a wealth of possibilities for encoding operations whose inputs have some structure on them.  One example of such an operation is composition in an $n$-category; we will see that the ``globular operads'' in Batanin's definition are also a particular kind of generalised operad.

Generalised operads were introduced by Burroni in \cite{bur1} under the name of ``$T$-category''.  The idea was later rediscovered independently by Hermida \cite{her1} and Leinster \cite{lei11}.


\begin{mydefinition}

Let \cl{E} be a category with pullbacks (i.e. it is cartesian) and a terminal object. Let $T$ be a cartesian monad on \cl{E}, that is, the functor part preserves pullbacks, and all the naturality squares for $\eta$ and $\mu$ are pullbacks.

The category \cat{$T$-Coll} of \demph{$T$-collections} is defined to be the slice category $\cl{E}/T1$.  This is a monoidal category where the tensor product of two objects
\[\left(
\def\objectstyle{\scriptstyle}
\def\labelstyle{\scriptstyle}
\vcenter{\xymatrix @-1.0pc  { A \ar[d]^{\alpha} \\ T1  }} \right) \otimes \left(
\def\objectstyle{\scriptstyle}
\def\labelstyle{\scriptstyle}
\vcenter{\xymatrix @-1.0pc  { B \ar[d]^{\beta} \\ T1  }} \right)\]
is the left-hand edge of the following diagram
%
 \[
\xy
(0,0)*+{T1}="1";
(12,12)*+{T^2 1}="2";
(24,24)*+{TA}="3";
(36,12)*+{T1}="4";
(48,24)*+{B}="5";
(36,36)*+{\cdot}="6";
(36,31)*+{\dcorner}="";
{\ar_{\mu_1} "2";"1"};
{\ar_{T\alpha} "3";"2"};
{\ar^{T!} "3";"4"};
{\ar_{\beta} "5";"4"};
{\ar^{} "6";"3"};
{\ar^{} "6";"5"};
\endxy
\]
and the unit is the collection
\[\xymatrix  { 1 \ar[d]^{\eta_1} \\ T1 . }\]
The unit and associativity axioms follow from the fact that $T$ is a cartesian monad. \end{mydefinition}

\begin{mydefinition}  An \demph{$(\cl{E},T)$-operad} is a monoid in the monoidal category \cat{$T$-coll}.  We refer to such operads generally as \demph{generalised operads}.
\end{mydefinition}

\noindent So an $(\cl{E},T)$-operad is given by an object $P \in \cl{E}$ and a morphism
\[\xymatrix { P \ar[d]^{d} \\ T1  }\]
equipped with maps for unit
\[\left(
\def\objectstyle{\scriptstyle}
\def\labelstyle{\scriptstyle}
\vcenter{\xymatrix @-1.0pc  { 1 \ar[d]^{\eta} \\ T1  }} \right) \lra \left(
\def\objectstyle{\scriptstyle}
\def\labelstyle{\scriptstyle}
\vcenter{\xymatrix @-1.0pc  { P \ar[d]^{d} \\ T1  }} \right)\]
and multiplication
\[\left(
\def\objectstyle{\scriptstyle}
\def\labelstyle{\scriptstyle}
\vcenter{\xymatrix @-1.0pc  { P \ar[d]^{d} \\ T1  }} \right)
\otimes
\left(
\def\objectstyle{\scriptstyle}
\def\labelstyle{\scriptstyle}
\vcenter{\xymatrix @-1.0pc  { P \ar[d]^{d} \\ T1  }} \right)
\lra
\left(
\def\objectstyle{\scriptstyle}
\def\labelstyle{\scriptstyle}
\vcenter{\xymatrix @-1.0pc  { P \ar[d]^{d} \\ T1  }} \right).\]
$P$ can be thought of as the operations and $T1$ their arities; the map $d$ gives the arities of the operations.  By abuse of notation we often refer to an operad as above simply as $P$.

\vspace{2em}
\begin{myexamples} \mbox{\ }

\begin{enumerate}

\item Let $(\cl{E},T) = (\cat{Set}, \cat{id})$.  Then an $(\cl{E},T)$-operad is precisely a monoid.

\item Let $(\cl{E},T) = (\cat{Set},\mbox{``free monoid monad''})$.  Then an $(\cl{E},T)$-operad is an ordinary non-symmetric operad in \cat{Set}.

\item Let $(\cl{E},T) = (\cat{Gph}, \mbox{``free category monad''})$.  Then an $(\cl{E},T)$-operad is what Leinster calls a \cat{fc}-operad, \cat{fc} being notation for the free category monad on the category \cat{Gph} of graphs \cite[Chapter 5]{lei8}.  A key example for us will be the enriched version of this example, which we will introduce in Theorem~\ref{fcv}.

\item We will later see that a globular operad is defined to be an $(\cl{E},T)$-operad where $\cl{E}=\cat{GSet}$ and $T$ is the free strict $\omega$-category monad on \cat{GSet}; there is also an $n$-dimensional version.

\end{enumerate}
\end{myexamples}

We now turn our attention to algebras. Recall that a classical operad has an associated monad, and algebras for the operad are precisely algebras for its associated monad.  A similar result holds for $(\cl{E},T)$-operads as follows.

\begin{mydefinition}   Let $P$ be an $(\cl{E},T)$-operad.  Then the \demph{associated monad} $T_P$ on \cl{E} is defined as follows.  Given an object $A \in \cl{E}$, the object $T_PA$ is given by the following pullback
\[\xy
(0,15)*+{T_PA}="t";
   (-15,0)*+{P}="l";
   (15,0)*+{TA}="r";
   (0,-15)*+{T1}="b";
       {\ar "t";"l"};
       {\ar "t";"r"};
       {\ar "r";"b"};
       {\ar "l";"b"};
(0,10)*{\dcorner};
\endxy \]
The unit and multiplication come from the unit and multiplication of $P$.  An \demph{algebra} for the operad $P$ is then defined to be an algebra for this monad.
\end{mydefinition}


The above association of a monad to any operad extends to the following comparison result which characterises generalised operads in the form that we will construct them in Section~\ref{four}; it appears as Corollary 6.2.4 of \cite{lei8}.

Recall that a cartesian natural transformation is one whose naturality squares are all pullbacks; a cartesian monad is one whose functor part preserves pullbacks, and whose unit and multiplication are cartesian natural transformations.

%

\begin{theorem}\label{operadmonad} Let $T$ be a cartesian monad on a cartesian category \cl{E}.  Then the category of $(\cl{E},T)$-operads is equivalent to the category in which
\begin{itemize}
\item an object is a cartesian monad $P$ on \cl{E} together with a cartesian natural transformation $\alpha: P \Rightarrow T$ commuting with the monad structures, and
\item a morphism $(P,\alpha) \lra (P', \alpha')$ is a cartesian natural transformation $\theta: P \Rightarrow P'$ commuting with the monad structures and satisfying $\alpha' \circ \theta = \alpha$.
\end{itemize}
\end{theorem}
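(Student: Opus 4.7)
The plan is to construct functors in both directions and verify they form an equivalence. In one direction, send an $(\cl{E},T)$-operad $P$ with structure map $d:P\to T1$ to its associated monad $T_P$ together with the natural transformation $\alpha:T_P\Rightarrow T$ whose component at $A$ is the right-hand leg of the pullback defining $T_PA$. That $T_P$ is a monad was noted just before the theorem; that $\alpha$ is cartesian follows because each of its naturality squares pastes together the defining pullback of $T_P$ with the image under $T$ of a commuting square, which is itself a pullback since $T$ preserves pullbacks. Compatibility of $\alpha$ with the monad structures is immediate from the way $\eta^{T_P}$ and $\mu^{T_P}$ are built from $\eta^P,\mu^P$ and the monad structure of $T$.

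In the other direction, given a cartesian monad $R$ on $\cl{E}$ with a cartesian transformation $\alpha:R\Rightarrow T$ compatible with the monad structures, form the underlying collection by evaluating at the terminal object to get $\alpha_1:R1\to T1$. The operad unit is $\eta^R_1:1\to R1$, which sits over $\eta^T_1$ by compatibility of $\alpha$ with units. For the operadic composition, one uses that the naturality square of $\alpha$ at the unique map $R1\to 1$ is a pullback, which provides a canonical identification of $R(R1)$ with the collection tensor product $R1\otimes R1$; the operadic multiplication is then $\mu^R_1:R(R1)\to R1$, and associativity and unitality of the operad follow directly from the monad axioms for $R$.

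The hinge of the equivalence is cartesianness of $\alpha$: the naturality square at the unique map $A\to 1$ is a pullback, so $RA\cong R1\times_{T1}TA$. This is exactly the formula defining the monad associated to the operad $R1\to T1$ just extracted. Hence the round trip ``operad $\to$ monad $\to$ operad'' is the identity on collections, while ``monad $\to$ operad $\to$ monad'' is a natural isomorphism via this pullback formula. The step requiring the most care is checking that the two monad multiplications agree under this isomorphism: on one side $\mu^{T_P}$ is constructed by pasting pullbacks, on the other side $\mu^R$ is given; matching them is a diagram chase using that $T$ preserves pullbacks, that $\mu^T$ is cartesian, and that $\alpha$ is compatible with both multiplications. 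This is the main technical point, though it is essentially bookkeeping rather than a conceptual difficulty. The morphism part of the equivalence is then routine: a cartesian monad transformation over $T$ restricts at $1$ to a collection map respecting operad structure, and conversely any collection map between such operads extends uniquely through the pullback formula and is automatically cartesian.
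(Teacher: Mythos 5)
The paper does not actually prove this statement: it explicitly defers to Leinster, citing it as Corollary~6.2.4 of \cite{lei8}, so there is no in-paper argument to compare against. Your sketch is essentially the standard proof from that reference, and it is sound: extracting the collection $\alpha_1 : R1 \to T1$ from a cartesian monad over $T$, identifying $R(R1)$ with the collection tensor $R1 \otimes R1$ via the cartesian naturality square of $\alpha$ at $R1 \to 1$ (this pullback is literally the one defining $\otimes$ in the paper, with $T!$ on one leg and $\alpha_1$ on the other), and recovering $RA \cong R1 \times_{T1} TA$ from the naturality square at $A \to 1$ is exactly how the equivalence is established, and you correctly flag the comparison of the two multiplications as the only point needing a real diagram chase.

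One justification is slightly off, though the conclusion is right: the cartesianness of $\alpha : T_P \Rightarrow T$ does not come from $T$ preserving pullbacks. Rather, for $f : A \to B$ both $T_PA \to TA \to T1$ and $T_PB \to TB \to T1$ are the defining pullbacks over $P \to T1$, so the pasting lemma (a square sitting on top of a pullback is a pullback iff the composite rectangle is) gives that the naturality square of $\alpha$ at $f$ is a pullback; no hypothesis on $T$ is needed for this step. Where $T$ being a cartesian monad genuinely enters is in making the collection tensor associative and unital, in showing $T_P$ preserves pullbacks, and in the comparison of $\mu^{T_P}$ with $\mu^R$, as you note.
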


%
%


We will not give the proof of this equivalence result here, but it is worth noting that the underlying collection of an operad expressed as above is
\[\xymatrix { P1 \ar[d]^{\alpha_1} \\ T1 . } \]
By abuse of notation, we will often write $P$ for both the operad and its associated monad.

\subsection{Operad suspension}
\label{twopointtwo}


In this section we show how to take a classical operad $P$ in a suitably well-behaved category \cl{V} and construct from it an $(\cl{E},T)$-operad $\Sigma P$, for some suitable \cl{E} and $T$, such that the algebras for $\Sigma P$ are precisely the $(\cl{V},P)$-categories we defined in the previous section.  We use the notation $\Sigma$ as the construction can be viewed as a sort of ``suspension''.  This is a generalisation of the $\Sigma$ construction given by Leinster for the case $\cl{V} = \cat{Set}$ in \cite[Example 5.1.6]{lei8}.  Thus where Leinster uses $T=\cat{fc}$, the free category monad, we will use the ``free \cl{V}-category monad'' $\cat{fc}_{\cl{V}}$ which we now define.  This construction is analogous to the construction of \cat{fc}, which is itself analogous to the construction of a free monoid on a set $A$ as
\[\coprod_{k \geq 0} A^k.\]
Leinster makes this construction in \cite[Appendix F]{lei8} for any presheaf category \cl{V}.  In fact (as Leinster points out) the construction works in a much more general context; here we give it in a slightly more general context in order to gain some more examples, although for our main theorem in Section~\ref{four} we too will only need the result for presheaf categories.  First recall that the underlying data for a \cl{V}-category is a \cl{V}-graph.

%
%
%

\begin{thm}\label{fcv}
Let \cl{V} be a category with finite products and small coproducts that commute.  Then the forgetful functor
\[\cat{\cl{V}-Cat} \lra \cat{\cl{V}-Gph}\]
is monadic.  The induced  ``free \cl{V}-category monad'' $\cat{fc}_{\cl{V}}$ is cartesian and coproduct-preserving.  Its action is given as the identity on objects, and on hom-objects:
\[(\cat{fc}_{\cl{V}}A)(a,a') = \coprod_{k \geq 0,\  a=a_0, a_1, \ldots, a_{k-1}, a_k=a'} A(a_{k-1},a_k) \times \cdots \times A(a_0,a_1).\]

\end{thm}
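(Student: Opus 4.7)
My approach is to construct the free $\cl{V}$-category explicitly, deduce the adjunction by universal property, invoke Beck's monadicity theorem, and then read off cartesianness and coproduct-preservation directly from the explicit formula. The formula in the statement plays a dual role: it is both the definition of the left adjoint and the chief computational device for everything else.

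First I would define $\cat{fc}_{\cl{V}}A$ to have the same object set as $A$ and the stated hom-objects, and equip it with a $\cl{V}$-category structure. Identities come from the $k=0$ summand, which is the terminal object of $\cl{V}$ (the empty product). Composition is juxtaposition of strings: the product of a length-$\ell$ summand and a length-$k$ summand should map by inclusion into the length-$(\ell+k)$ summand whose intermediate objects are obtained by concatenation. To turn juxtaposition into an actual morphism $(\cat{fc}_{\cl{V}}A)(a',a'') \times (\cat{fc}_{\cl{V}}A)(a,a') \lra (\cat{fc}_{\cl{V}}A)(a,a'')$, I use the distributivity hypothesis to pull the outer product inside the double coproduct. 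Associativity and unit laws then follow from standard coherence between coproduct and product.

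Second, for the adjunction, the unit $\eta_A : A \lra U \cat{fc}_{\cl{V}}A$ is the inclusion of the $k=1$ summand on each hom-object. Given a $\cl{V}$-category $B$ and a $\cl{V}$-graph map $f : A \lra UB$, the unique $\cl{V}$-functorial extension $\bar f : \cat{fc}_{\cl{V}}A \lra B$ is forced to send the length-$k$ summand by the composite $A(a_{k-1},a_k) \times \cdots \times A(a_0, a_1) \lra B(Fa_{k-1},Fa_k) \times \cdots \times B(Fa_0,Fa_1) \lra B(Fa_0, Fa_k)$, and these assemble out of the coproduct. Monadicity then follows from Beck's theorem: $U$ obviously reflects isomorphisms, and it creates coequalizers of $U$-split pairs, since the splitting transports the $\cl{V}$-category composition through the coequalizer componentwise.

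Finally, for the monad-theoretic properties I would work from the formula. Pullbacks in $\cat{\cl{V}-Gph}$ are computed componentwise (as a pullback of sets on the objects and as a pullback in $\cl{V}$ on each hom-object), and the hom-objects of $\cat{fc}_{\cl{V}}A$ are coproducts of finite products. Since finite products preserve pullbacks, and the distributivity hypothesis implies that the small coproducts in $\cl{V}$ are stable under pullback, $\cat{fc}_{\cl{V}}$ preserves pullbacks. For cartesianness of $\eta$, the naturality square is the inclusion of a single summand, which is cartesian by disjointness and stability of the coproduct. For $\mu$, after unfolding the formula the naturality square expresses that concatenation of strings behaves as a pullback, which reduces to the same stability property. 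Coproduct preservation is immediate because coproducts of $\cl{V}$-graphs are also computed componentwise, and the formula is itself a coproduct. The main obstacle is verifying cartesianness of $\eta$ and $\mu$: one has to convert the mild distributivity hypothesis into disjointness and stability of coproducts, sufficient for the bookkeeping of summand inclusions and string concatenations to yield pullback squares. In the presheaf-category case needed for Section~\ref{four} this is automatic, but in the stated generality it is the delicate point of the proof.
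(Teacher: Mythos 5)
Your proposal is correct and fills in exactly the ``routine calculations'' that the paper's proof consists of: the paper gives no argument beyond deferring to Leinster's Appendix~F construction for presheaf categories, and your explicit construction of the left adjoint via strings, the Beck's-theorem argument for monadicity, and the componentwise verification of cartesianness and coproduct-preservation are the standard way to carry this out. Your closing observation is also well taken: the stated hypothesis that finite products and small coproducts commute (distributivity) is genuinely weaker than the disjointness and pullback-stability of coproducts needed to make the naturality squares for $\eta$ and $\mu$ into pullbacks, and the paper quietly assumes the stronger property elsewhere (Proposition~\ref{sigmap} explicitly requires coproducts commuting with pullbacks); in the presheaf categories used for the main theorem both properties hold, so nothing downstream is affected.
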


\begin{proof}
Routine calculations.
\end{proof}

\begin{myexample} In the case $\cl{V} = \cat{Set}$ we have $\cat{fc}_{\cl{V}} = \cat{fc}$.
\end{myexample}

\begin{myexample}\label{keyexample}

The following example will be a key example for our constructions. Suppose \cl{V} satisfies the hypotheses of the above theorem. Then since \cl{V} has a terminal object 1, \cat{\cl{V}-Gph} has a terminal object which we will also write 1; it has a single object, and its single hom-object is $1 \in \cl{V}$.  Then $\cat{fc}_{\cl{V}}(1)$ has a single object and ``\bb{N}'s worth'' of morphisms.  That is, its single hom-object is
\[\coprod_{k \geq 0} 1.\]

\end{myexample}

%


\begin{mydefinition}
Let $P$ be a (classical) operad in \cl{V}.  We define the \demph{suspension}  $\Sigma P$ of $P$ to be a  $(\cat{\cl{V}-Gph}, \cat{fc}_{\cl{V}})$-operad with underlying collection
\[\xymatrix { \Sigma P \ar[d]^{} \\ \cat{fc}_{\cl{v}}1  } \]
where the \cl{V}-graph $\Sigma P$ is given by:

\begin{itemize}

\item $(\Sigma P)_0$ is a terminal set, $\{*\}$, say, and
\item $\Sigma P (*,*) = \displaystyle\coprod_{k \geq 0} P(k)$;
\end{itemize}
the morphism $\Sigma P \lra \cat{fc}_{\cl{V}}1$ is defined on hom-objects in the obvious way, by degree, so we have
\[\coprod_{k \geq 0} P(k) \lra \coprod_{k \geq 0} 1.\]
Furthermore, the unit and multiplication are constructed from the unit and multiplication for $P$.
\end{mydefinition}

\begin{myexample}
Let $\cl{V} = \cat{Set}$ and $P$ an operad in \cat{Set}.  Then $\cat{fc}_{\cl{V}} = \cat{fc}$ and $\Sigma P$ is exactly the suspension \cat{fc}-operad described in \cite[Examples 5.1.6 and 5.1.7]{lei8}.
\end{myexample}

\begin{prop}\label{sigmap}
Let \cl{V} be a cartesian category with tensor product given by product, and small coproducts commuting with pullbacks.  Then a $(\cl{V},P)$-category is an algebra for $\Sigma P$, and this extends to an equivalence of categories
\[\cat{$(\cl{V},P)$-Cat} \catequiv \mbox{$\Sigma P$-Alg}.\]
\end{prop}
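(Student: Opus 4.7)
The plan is to compute the monad $M = T_{\Sigma P}$ associated to the operad $\Sigma P$ explicitly, and then observe that an $M$-algebra structure on a $\cl{V}$-graph $A$ unpacks, term by term, to the data and axioms of a $(\cl{V},P)$-category.

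First I would compute the underlying endofunctor of $M$. By definition $MA$ is the pullback in $\cat{\cl{V}\textrm{-Gph}}$
\[
\xymatrix{ MA \ar[r] \ar[d] & \cat{fc}_{\cl{V}} A \ar[d] \\ \Sigma P \ar[r] & \cat{fc}_{\cl{V}} 1. }
\]
Since pullbacks in $\cat{\cl{V}\textrm{-Gph}}$ are computed hom-wise, and since $\Sigma P$ has a terminal object-set, the object-set of $MA$ is $A_0$. On hom-objects, for $a,a'\in A_0$ I would expand the pullback
\[
MA(a,a') = \Big(\coprod_{k\geq 0} P(k)\Big) \times_{\coprod_{k\geq 0} 1} \Big(\coprod_{k\geq 0,\,a=a_0,\ldots,a_k=a'} A(a_{k-1},a_k)\times\cdots\times A(a_0,a_1)\Big),
\]
and use the standing hypothesis that small coproducts commute with pullbacks in $\cl{V}$ to rewrite this as
\[
MA(a,a') = \coprod_{k\geq 0,\,a=a_0,\ldots,a_k=a'} P(k)\times A(a_{k-1},a_k)\times\cdots\times A(a_0,a_1).
\]

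Next I would unpack what an $M$-algebra $MA\to A$ is. Since $M$ is the identity on objects, such a morphism is forced to be the identity on objects, and on hom-objects it is a map out of the above coproduct. By the universal property of $\coprod$, this is precisely a family of morphisms
\[
P(k)\times A(a_{k-1},a_k)\times\cdots\times A(a_0,a_1)\lra A(a_0,a_k),
\]
one for each $k\geq 0$ and each string $a_0,\ldots,a_k$ in $A_0$. This is exactly the composition data of a $(\cl{V},P)$-category.

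Then I would verify that the monad axioms for $M$ translate into the compatibility condition with the operad structure on $P$. The unit $\eta:A\to MA$ picks out the summand $k=1$ via the unit of $\Sigma P$, so the unit law for the $M$-algebra says precisely that the operad unit of $P$ acts as an identity. The multiplication $\mu:MMA\to MA$ is built from the operadic multiplication of $\Sigma P$ together with the $\cat{fc}_{\cl{V}}$-multiplication on $\cat{fc}_{\cl{V}}A$; expanding the associativity square on each coproduct summand produces exactly the compatibility diagram with the operadic composition of $P$, no more and no less. The main bookkeeping obstacle is precisely this step: tracking which summand of the iterated coproduct maps where, and checking that the multiplication of $\Sigma P$ (reindexing in the $k$-coordinate) agrees with the substitution of strings of composable arrows performed by $\cat{fc}_{\cl{V}}$.

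Finally, I would repeat the same unpacking for morphisms: a morphism of $M$-algebras is a map $F$ of $\cl{V}$-graphs making the square $MF$ commute with the structure maps; summand-by-summand this becomes precisely the compatibility square in the definition of a morphism of $(\cl{V},P)$-categories. This yields a functor in each direction, and the two unpackings are mutually inverse on the nose, giving the stated equivalence.
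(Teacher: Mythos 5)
Your proposal is correct and follows essentially the same route as the paper: compute $(\Sigma P)(A)$ via the componentwise pullback over $\cat{fc}_{\cl{V}}1$, use the hypothesis that coproducts commute with pullbacks to reduce it to a coproduct of products $P(k)\times A(a_{k-1},a_k)\times\cdots\times A(a_0,a_1)$, and read off the algebra action as the family of composition morphisms. The paper leaves the verification of the axioms and the extension to morphisms as ``straightforward,'' whereas you sketch them explicitly, but there is no substantive difference.
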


\begin{prf}
An algebra for $\Sigma P$ consists of

\begin{itemize}
\item a \cl{V}-graph $A$, and
\item an algebra action $(\Sigma P)(A) \lra A$.
\end{itemize}
Here $(\Sigma P)(A)$ is given by the pullback
\[\xy
(0,15)*+{\cdot}="t";
   (-15,0)*+{\Sigma P}="l";
   (15,0)*+{\fc_{\cl{V}}(A)}="r";
   (0,-15)*+{\fc_{\cl{V}}(1)}="b";
       {\ar "t";"l"};
       {\ar "t";"r"};
       {\ar "r";"b"};
       {\ar "l";"b"};
(0,10)*{\dcorner};
\endxy \]
This pullback is given componentwise.  On object sets, the lower-lefthand map is the identity, so the the pullback has the same set of objects as $\cat{fc}_{\cl{V}}(A)$, i.e. just the objects of $A$.  For hom-objects we have for each pair of objects $a,a' \in A$ an object $(\Sigma P)(A)(a,a')$ given by a pullback of coproducts as below
\[\xy
(0,25)*+{\cdot}="t";
   (-25,0)*+{\displaystyle\coprod_{k \geq 0} P(k)}="l";
   (25,0)*+++{\makebox[13em][l]{$\displaystyle\coprod_{\begin{array}{c} {\scriptstyle k\geq 0} \\ {\scriptstyle a=a_0, \ldots, a_k=a' \in A(0)} \end{array}} \hspace{-3em}A(a_{k-1}, a_k) \times \cdots \times A(a_0, a_1)$}}="r";
   (0,-25)*+{\displaystyle\coprod_{k \geq 0} 1}="b";
       {\ar "t";"l"};
       {\ar "t";"r"};
       {\ar "r";"b"};
       {\ar "l";"b"};
(0,20)*{\dcorner};
\endxy \]
hence, since these coproducts commute with pullbacks, it is the coproduct over $k \geq 0$ of pullbacks
\[\xy
(0,20)*+{\cdot}="t";
   (-20,0)*+{P(k)}="l";
   (20,0)*++{\makebox[13em][l]{$\displaystyle\coprod_{\begin{array}{c} {\scriptstyle a=a_0, \ldots, a_k=a' \in A(0)} \end{array}} \hspace{-2em}A(a_{k-1}, a_k) \times \cdots \times A(a_0, a_1)$}}="r";
   (0,-20)*+{1}="b";
       {\ar "t";"l"};
       {\ar "t";"r"};
       {\ar "r";"b"};
       {\ar "l";"b"};
(0,15)*{\dcorner};
\endxy \]
but for each $k \geq 0$ this pullback is just a product, so we have
\[(\Sigma P)(A)(a,a') = \displaystyle\coprod_{\begin{array}{c} {\scriptstyle k\geq 0} \\ {\scriptstyle a=a_0, \ldots, a_k=a' \in A(0)} \end{array}} \hspace{-2em}
P(k) \times  A(a_{k-1},a_k) \times \cdots \times A(a_0,a_1).\]
Hence an algebra action is given by, for all $k \geq 0,\ a_0, \ldots , a_k \in A_0$ a morphism
\[P(k) \times A(a_{k-1}, a_k) \times \cdots \times A(a_0,a_1) \lra A(a_0,a_k)\]
satisfying the relevant axioms to give precisely a $(\cl{V},P)$-category. This extends to an equivalence of categories straightforwardly.
\end{prf}

%

\begin{mydefinition}
We write $\cat{fc}_{(\cl{V},P)}$ for the cartesian monad on \cat{\cl{V}-Gph} associated to the operad $\Sigma P$.  By Proposition~\ref{sigmap}, its category of algebras is \cat{$(\cl{V},P)$-Cat}, so it is the monad for ``free $(\cl{V},P)$-categories''.
\end{mydefinition}

\begin{myexample}\label{twopointtwelve}
In the case $P=1$, we have $\cat{fc}_{(\cl{V},P)} = \cat{fc}_{\cl{V}}$.
\end{myexample}

This example shows how our construction is a generalisation of the free \mbox{\cl{V}-category} construction.  In \cite[Appendix F]{lei8} Leinster makes great use of the $\cat{fc}_{\cl{V}}$ construction to define the free strict $n$-category monads; in Section~\ref{four} we will proceed analogously using the $\cat{fc}_{(\cl{V},P)}$ construction to define the free weak $n$-category monads associated to an iterative operadic theory.

\section{Batanin's definition}\label{three}

We begin with some preliminaries on globular sets and pasting diagrams, which will also be useful in understanding the constructions of Section~\ref{four}.  We are going to need the notion of ``globular operad''; these operads are the same as $(\cat{GSet},T)$-operads where $T$ is the free strict $\omega$-category monad, which constructs pasting diagrams freely.  So it is useful to introduce some notation and terminology for pasting diagrams first.

\subsection{Globular sets and pasting diagrams}\label{threepointone}

\begin{mydefinition}
A \demph{globular set} $A$ is a diagram in \cat{Set}
\[\xy
(120,0)*+{A_0}="0";
(103,0)*+{A_1}="1";
(86,0)*+{A_2}="2";
(72,0)*+{}="3";
(62,0)*+{}="4";
(45,0)*+{A_{n-1}}="5";
(24,0)*+{A_{n}}="6";
(10,0)*+{}="7";
(67,0)*{\cdots};
(6,0)*{\cdots};
{\ar@<0.7ex>^{s} "1"; "0"};
{\ar@<-0.7ex>_{t} "1"; "0"};
{\ar@<0.7ex>^{s} "2"; "1"};
{\ar@<-0.7ex>_{t} "2"; "1"};
{\ar@<0.7ex>^<<<<<{s} "3"; "2"};
{\ar@<-0.7ex>_<<<<<{t} "3"; "2"};
{\ar@<0.7ex>^>>>>>>{s} "5"; "4"};
{\ar@<-0.7ex>_>>>>>>{t} "5"; "4"};
{\ar@<0.7ex>^<<<<<{s} "6"; "5"};
{\ar@<-0.7ex>_<<<<<{t} "6"; "5"};
{\ar@<0.7ex>^<<<<<<{s} "7"; "6"};
{\ar@<-0.7ex>_<<<<<<{t} "7"; "6"};
\endxy\]
satisfying the globularity conditions $ss=st$ and $ts=tt$.  Globular sets together with the obvious morphisms form a category \cat{GSet}, which can of course also be expressed as a presheaf category; we write \bb{G} for the ``globe'' category on which globular sets are presheaves, whose objects are the natural numbers.   In particular \cat{GSet} is cartesian.  An \demph{$n$-dimensional globular set} or simply \mbox{\demph{$n$-globular set}} is a globular set with $A_k=\emptyset$ for all $k>n$.  Similarly the $n$-globular sets form a category \cat{$n$-GSet}.
\end{mydefinition}

We will be making use of the fact that $n$-globular sets can be formed by iterating the $\cl{V}$-graph construction.  The following is proved as part of Proposition 1.4.9 in \cite{lei8}.

\begin{lemma}\label{lemmathreepointtwo}
Write $\cat{0-Gph}=\cat{Set}$ and for all $n>0$ put
\[\cat{$n$-Gph} = \cat{($(n-1)$-Gph)-Gph}.\]
Then for all $n \geq 0$ we have
\[\cat{$n$-Gph} \catequiv \cat{$n$-GSet}.\]
\end{lemma}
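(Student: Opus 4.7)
The plan is to proceed by induction on $n$, and at each step unwind the definition of a $(\cl{W})$-graph in terms of its object set and its hom-$\cl{W}$-objects.

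For the base case $n=0$, both categories are $\cat{Set}$ by definition: a $0$-globular set is just a diagram with nothing above dimension $0$, i.e.\ a set $A_0$, matching $\cat{0-Gph}=\cat{Set}$.

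For the inductive step, assume $\cat{$(n-1)$-Gph} \catequiv \cat{$(n-1)$-GSet}$. By definition $\cat{$n$-Gph}=\cat{($(n-1)$-Gph)-Gph}$, which by the inductive hypothesis is equivalent to $\cat{($(n-1)$-GSet)-Gph}$. So it suffices to exhibit an equivalence
\[
\cat{($(n-1)$-GSet)-Gph} \catequiv \cat{$n$-GSet}.
\]
The key observation is that the source/target pair $s,t\colon A_1\rightrightarrows A_0$ in an $n$-globular set induces, for each ordered pair $(a,a')\in A_0\times A_0$, a sub-globular-set of ``cells from $a$ to $a'$'' living in dimensions $0$ through $n-1$. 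Explicitly, I would define a functor $\Phi\colon\cat{$n$-GSet}\lra\cat{($(n-1)$-GSet)-Gph}$ sending $A$ to the graph with object-set $A_0$ and hom-$(n-1)$-globular-set $A(a,a')$ having $k$-cells $\{x\in A_{k+1}\;:\;s^{k+1}x=a,\ t^{k+1}x=a'\}$, with the source/target/globularity inherited from $A$ shifted by one dimension. In the other direction I would define $\Psi$ which takes an $(n-1)$-GSet-graph $B$ to the $n$-globular set with $B_0$ as objects and $(\Psi B)_{k+1}=\coprod_{a,a'\in B_0}B(a,a')_k$ for $0\leq k\leq n-1$, with source/target assembled from those of each hom together with the endpoint data $a,a'$.

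The remaining work is to check that $\Phi$ and $\Psi$ are quasi-inverse, and that each is functorial on morphisms. Both verifications are mechanical: the globularity equations $ss=st$ and $ts=tt$ in $A$ correspond exactly to the requirement that for each $(a,a')$ the iterated sources and targets keep us inside the sub-globular-set $A(a,a')$, and conversely the disjoint-union decomposition of $A_{k+1}$ over endpoint-pairs recovers $A$ from $\Phi A$. I do not expect any serious obstacle — the content of the lemma is purely combinatorial repackaging — but a small amount of care is needed to verify that the functors respect morphisms at all dimensions and that the two compositions are naturally isomorphic to the identity functors. The induction then completes the proof.
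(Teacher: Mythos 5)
Your proof is correct and is essentially the argument the paper relies on: the paper gives no proof of its own but defers to Proposition 1.4.9 of Leinster's \emph{Higher Operads, Higher Categories}, whose proof is exactly this induction, repackaging an $n$-globular set as its object set together with the hom-$(n-1)$-globular sets cut out by iterated sources and targets (with the globularity equations guaranteeing that each hom is closed under $s$ and $t$, as you note). The only step you leave implicit is that passing from \cat{($(n-1)$-Gph)-Gph} to \cat{($(n-1)$-GSet)-Gph} uses that $\cl{W}\mapsto\cat{\cl{W}-Gph}$ is (2-)functorial in $\cl{W}$ and hence carries the inductive equivalence to an equivalence; this is the ``suspension 2-functor'' the paper itself invokes in Section~\ref{four}, so it is readily available.
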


We will use \cat{$n$-Gph} and \cat{$n$-GSet} slightly interchangeably, tending to prefer the former when we are iterating an enrichment construction, and the latter when dealing with globular operads.

We now present the free strict $\omega$-category monad, which is treated in great detail in \cite{lei8}.  There is a forgetful functor
\[U:\cat{$\omega$-Cat} \lra \cat{GSet}\]
where \cat{$\omega$-Cat} is the category of strict $\omega$-categories.  The following is proved in \cite[Appendix F]{lei8}.

\begin{theorem}

The forgetful functor $U$ is monadic and the induced monad $T$ on \cat{GSet} is cartesian.  Likewise for the $n$-dimensional case, which induces a monad $T^{(n)}$ on \cat{$n$-GSet}.
\end{theorem}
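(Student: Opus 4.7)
The plan is to proceed by induction on dimension, leveraging the iterated enrichment decomposition $\cat{$n$-Gph} \catequiv \cat{$n$-GSet}$ from Lemma~\ref{lemmathreepointtwo}. For the $n$-dimensional statement, the base case $n=0$ is trivial: $T^{(0)} = \id$ on \cat{Set} with $\cat{$0$-Cat} = \cat{Set}$, so $U^{(0)}$ is the identity functor, which is monadic, and the identity monad is cartesian. I then assume inductively that $T^{(n-1)}$ is a cartesian monad on $\cat{$(n-1)$-GSet}$ whose algebras are $\cat{$(n-1)$-Cat}$.

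For the inductive step, I would construct $T^{(n)}$ on $\cat{$n$-Gph} = \cat{$\cl{V}$-Gph}$ with $\cl{V} = \cat{$(n-1)$-GSet}$ as the composite of two cartesian monads linked by a distributive law. The first ingredient is the hom-wise extension $\widehat{T^{(n-1)}}$ of $T^{(n-1)}$ to $\cat{$\cl{V}$-Gph}$, fixing objects and applying $T^{(n-1)}$ to each hom-object; intuitively this handles composition of cells of dimension at least $1$. The second is the free $\cl{V}$-category monad $\cat{fc}_{\cl{V}}$ from Theorem~\ref{fcv}, which freely generates $0$-composites. The composite monad is $T^{(n)}$, and its algebras are precisely strict $n$-categories, so monadicity of $U^{(n)}$ follows from Beck's theorem via the existence of the left adjoint and the levelwise computation of $U$-split coequalisers in the underlying presheaf category. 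Cartesianness of $T^{(n)}$ is inherited from its two factors: $T^{(n-1)}$ by inductive hypothesis (and its hom-wise extension remains cartesian because pullbacks in $\cat{$\cl{V}$-Gph}$ are computed componentwise), and $\cat{fc}_{\cl{V}}$ by Theorem~\ref{fcv}, provided the distributive law itself is cartesian.

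The principal obstacle is exhibiting and verifying the distributive law $\cat{fc}_{\cl{V}} \circ \widehat{T^{(n-1)}} \Rightarrow \widehat{T^{(n-1)}} \circ \cat{fc}_{\cl{V}}$. Intuitively this is strict interchange: a composable string of pasting-diagrams-in-homs rearranges into a pasting diagram of strings. One must write down a natural transformation of this shape, check the four distributive law axioms, and confirm each component square is a pullback. The pullback condition will reduce to a diagram chase using that $\cat{fc}_{\cl{V}}$ preserves pullbacks (Theorem~\ref{fcv}), that small coproducts commute with pullbacks in $\cl{V}$ (available since \cat{GSet} is a presheaf category), and that $T^{(n-1)}$ is cartesian by induction. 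This is where the bulk of the technical work sits.

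For the $\omega$-dimensional statement, I would realise $\cat{$\omega$-Cat}$ as the inverse limit of the truncation tower $\cdots \lra \cat{$(n+1)$-Cat} \lra \cat{$n$-Cat} \lra \cdots$, and correspondingly $T$ on $\cat{GSet}$ as the limit of the $T^{(n)}$ along the compatible projections. Both monadicity of $U$ and cartesianness of $T$ transfer from the $n$-dimensional case: pullbacks in \cat{GSet} are computed levelwise, and both the monad axioms and the requirement that naturality squares for $\eta$ and $\mu$ be pullbacks can be verified dimension by dimension, which reduces them to the $n$-dimensional statements already established.
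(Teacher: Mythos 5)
Your proposal is correct and follows essentially the same route as the paper, which does not prove this theorem itself but defers it to Leinster's Appendix F and then, in Section~4, spells out exactly the three ingredients you use: the free $\cl{V}$-category monad $\cat{fc}_{\cl{V}}$, the hom-wise extension $(-)_*$, and an interchange distributive law, assembled by induction on dimension. One small correction: for the composite monad to be $\cat{fc}_{\cl{V}} \circ T_*$ (first hom-wise composition of higher cells, then free $0$-composites, as your prose correctly describes), the distributive law must point the other way, namely $T_* \circ \cat{fc}_{\cl{V}} \Rightarrow \cat{fc}_{\cl{V}} \circ T_*$, induced by coproduct preservation of $T^{(n-1)}$ together with the canonical comparison $T\left(\prod_i X_i\right) \lra \prod_i TX_i$.
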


Thus $T$ is the free strict $\omega$-category monad, and its action on a globular set $A$ produces all formal composites of cells in $A$.

\begin{mydefinition}
An \demph{$m$-pasting diagram} is an $m$-cell of $T1$, where 1 is the terminal globular set which has precisely one cell of each dimension.
\end{mydefinition}

\begin{myexample}\label{pdex}
We will use the following running example of a 2-pasting diagram.

\vspace{-6pt}\[
       \def\objectstyle{\scriptstyle}
       \xy
   (18,0)*{\cdot}="1";
   (27,0)*{\cdot}="2";
       {\ar@/^.8pc/ "1";"2"};
       {\ar@/_.8pc/ "1";"2"};
       {\ar@/^2pc/ "1";"2"};
       {\ar@/_2pc/ "1";"2"};
       {\ar@/_3pc/ "1";"2"};
       {\ar@{=>} (22.5,1.5)*{};(22.5,-1.5)*{}} ;
       {\ar@{=>} (22.5,7.25)*{};(22.5,4.75)*{}} ;
       {\ar@{=>} (22.5,-4.75)*{};(22.5,-7.25)*{}} ;
       {\ar@{=>} (22.5,-9.25)*{};(22.5,-11.75)*{}} ;
       (-9,0)*{\cdot}="1";
       (0,0)*{\cdot}="2";
           {\ar "1";"2"};
           {\ar@/^1.1pc/ "1";"2"};
           {\ar@/_1.1pc/ "1";"2"};
           {\ar@{=>} (-4.5,3)*{};(-4.5,.75)*{}} ;
           {\ar@{=>} (-4.5,-.75)*{};(-4.5,-3)*{}} ;
   (0,0)*{\cdot}="1";
   (9,0)*{\cdot}="2";
       {\ar@/^.8pc/ "1";"2"};
       {\ar@/_.8pc/ "1";"2"};
       {\ar@{=>} (4.5,1.75)*{};(4.5,-1.75)*{}} ;
   (9,0)*{\cdot}="1";
   (18,0)*{\cdot}="2";
    {\ar "1";"2"};
\endxy\]
\end{myexample}

\noindent In fact for precise calculations we will make use of Batanin's highly efficacious tree notation for pasting diagrams, which follows from the following immediate consequence of Leinster's construction of the free strict $\omega$-category monad.

\begin{proposition}
Let $m \geq 1$. An $m$-pasting diagram is given by a (possibly empty) sequence of $(m-1)$-pasting diagrams.
\end{proposition}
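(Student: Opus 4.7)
The plan is to derive this proposition directly from the inductive construction of the free strict $\omega$-category monad $T$, precisely the construction that the paper will reuse in Section~\ref{four}. Specifically, I would invoke the fact (due to Leinster, and recalled in the excerpt via Lemma~\ref{lemmathreepointtwo} together with Theorem~\ref{fcv}) that $T^{(n)}$ is built inductively: viewing an $n$-globular set as a $\cl{V}$-graph for $\cl{V} = \cat{$(n-1)$-GSet}$, one first applies $T^{(n-1)}$ to each hom-$(n-1)$-globular set, and then applies the free \cl{V}-category monad $\cat{fc}_{\cl{V}}$. This is exactly the same ``compose-along-$0$-cells'' step whose weak version will appear in Section~\ref{four}.

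The next step is to specialise to the terminal $n$-globular set $1$. It has a single cell in every dimension, so as a $\cl{V}$-graph it has one object $*$ whose single hom is the terminal $(n-1)$-globular set $1 \in \cl{V}$. Applying $T^{(n-1)}$ to this hom just gives $T^{(n-1)}1$. Then Theorem~\ref{fcv} gives the explicit formula
\[
(T^{(n)}1)(*,*) \;=\; \coprod_{k\geq 0} \underbrace{T^{(n-1)}1 \times \cdots \times T^{(n-1)}1}_{k\text{ factors}},
\]
since every indexing sequence $*=a_0,\dots,a_k=*$ is forced.

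Finally, I would pass to $m$-cells. For $m\geq 1$, an $m$-cell of $T^{(n)}1$ is by definition an $(m-1)$-cell of the hom $(n-1)$-globular set $(T^{(n)}1)(*,*)$. Since products and coproducts of globular sets are computed dimensionwise, the displayed formula shows that an $m$-cell of $T^{(n)}1$ is exactly a finite (possibly empty) sequence of $(m-1)$-cells of $T^{(n-1)}1$. By definition each of these is an $(m-1)$-pasting diagram, which yields the claim. (For the $\omega$-dimensional statement, take $n$ large enough, or observe that $T$ restricted to cells of dimension $\leq m$ agrees with $T^{(m)}$.)

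The only real subtlety is the bookkeeping: one must check that the sequence structure on the $m$-cells supplied by the $\cat{fc}_{\cl{V}}$ step is the correct combinatorial structure — i.e.\ that the source/target data of consecutive entries in the sequence is compatible in a meaningful way, so that what we recover really is Batanin's tree description. Here this verification is trivial because the terminal globular set has exactly one cell in each dimension, so all source/target matching conditions are automatically satisfied; this is the one place where restricting to the terminal globular set makes the whole argument clean, and it is where I would expect any subtlety to arise in a more general version of the statement.
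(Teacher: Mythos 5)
Your proposal is correct and is essentially the paper's own argument: the paper states this proposition without proof as an ``immediate consequence of Leinster's construction of the free strict $\omega$-category monad,'' and what you have written is exactly that consequence made explicit --- the decomposition $T^{(n)} = \cat{fc}_{\cl{V}} \circ (T^{(n-1)})_*$ applied to the terminal globular set, using the explicit coproduct-of-products formula for $\cat{fc}_{\cl{V}}$ and the fact that (co)limits of globular sets are computed dimensionwise. Your closing remark about the terminal object trivialising the source/target matching, and the note that $T$ agrees with $T^{(m)}$ in dimensions $\leq m$, are exactly the right points to flag.
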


\begin{cor}
Let $m \geq 0$.  An $m$-pasting diagram is given by a (possibly degenerate) planar tree of height $m$.
\end{cor}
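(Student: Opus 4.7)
The plan is a straightforward induction on $m$, using the preceding proposition as the essential inductive mechanism; what needs care is unpacking the convention that a \emph{planar tree of height $m$} is allowed to be \emph{degenerate}, meaning that its leaves need not all sit at height exactly $m$ but may occur at any height $\leq m$.

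\textbf{Base case.} For $m=0$, the terminal globular set $1$ has a unique $0$-cell, so there is exactly one $0$-pasting diagram. On the tree side, there is exactly one planar tree of height $0$, namely the single-node tree (the root). So both sides are singletons and the correspondence is trivial.

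\textbf{Inductive step.} Suppose the claim holds for $m-1$, so $(m-1)$-pasting diagrams are in bijection with (possibly degenerate) planar trees of height $m-1$. By the preceding proposition, an $m$-pasting diagram is given by a (possibly empty) finite sequence
\[ (\pi_1, \pi_2, \ldots, \pi_r) \]
of $(m-1)$-pasting diagrams. By the inductive hypothesis, each $\pi_i$ corresponds to a planar tree $\tau_i$ of height $\leq m-1$. I would then define the associated planar tree of height $m$ to be the tree obtained by adjoining a new root and attaching the $\tau_i$ in left-to-right order as its children. If $r=0$ the result is the bare root, which is the degenerate height-$m$ tree; if some $\tau_i$ have height strictly less than $m-1$, the resulting tree is degenerate at those branches — which is exactly the reason the qualifier ``possibly degenerate'' is built into the statement.

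\textbf{Inverse direction and well-definedness.} Conversely, any (possibly degenerate) planar tree $\tau$ of height $m$ admits a canonical decomposition at its root into the ordered list of its maximal proper subtrees; each such subtree has height $\leq m-1$ and, by induction, corresponds uniquely to an $(m-1)$-pasting diagram. These two constructions are mutually inverse by construction, so the bijection is established.

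\textbf{Expected obstacle.} The only real subtlety is bookkeeping around degeneracy: one has to be comfortable identifying a tree of height strictly less than $m$ with a degenerate tree ``of height $m$'', so that the root-plus-subtrees construction stays within the class of height-$m$ trees when some subtree is short or the sequence is empty. Once this convention is fixed the induction is immediate, and no further computation with $T$ or with $\cat{GSet}$ is required beyond what the proposition already provides.
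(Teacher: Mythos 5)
Your proof is correct and is essentially the argument the paper intends: the corollary is stated as an immediate consequence of the preceding proposition, and the paper's own later discussion ("expressing an $m$-stage tree $\alpha$ as a sequence $(\alpha_1,\ldots,\alpha_k)$ of $(m-1)$-stage trees, joined by a new base node") is exactly your inductive step, with your remarks on degeneracy matching the paper's caveat that "some care is required for degenerate cases". Nothing further is needed.
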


\begin{myexample}
The pasting diagram in Example~\ref{pdex} corresponds to the tree
%
\[\xy
(-14,20)*{\bullet}="6";
(-10,20)*{\bullet}="7";
(-4,20)*{\bullet}="8";
(6,20)*{\bullet}="9";
(10,20)*{\bullet}="10";
(14,20)*{\bullet}="11";
(18,20)*{\bullet}="12";
(-12,10)*{\bullet}="2";
(-4,10)*{\bullet}="3";
(4,10)*{\bullet}="4";
(12,10)*{\bullet}="5";
(0,0)*{\bullet}="1";
"1";"2" **\dir{-};
"1";"3" **\dir{-};
"1";"4" **\dir{-};
"1";"5" **\dir{-};
"2";"6" **\dir{-};
"2";"7" **\dir{-};
"3";"8" **\dir{-};
"5";"9" **\dir{-};
"5";"10" **\dir{-};
"5";"11" **\dir{-};
"5";"12" **\dir{-};
\endxy\]
Note that some care is required for degenerate cases; the above picture also corresponds to a degenerate $m$-pasting diagram for each $m>2$.
\end{myexample}

\begin{mydefinition}
Given a tree $\tau$ its \demph{dimension} is the dimension of the corresponding pasting diagram, and we write $\cat{dim}(\tau)$; taking care to remember that for degenerate trees this will not be the height of the tree as drawn on the page.  If $\cat{dim}(\tau) = m$ then $\tau$ is called an \demph{$m$-stage tree}.
\end{mydefinition}

Every tree can be decomposed as a sequence of trees, and this corresponds to the expression of the corresponding $m$-pasting diagram as a sequence of $(m-1)$-pasting diagrams.  We will often give constructions or proofs by induction over dimension, expressing an $m$-stage tree $\alpha$ as a sequence $(\alpha_1, \ldots, \alpha_k)$ of $(m-1)$-stage trees, joined by a new base node as follows:

%
\[\xy
(-12,10)*+{\alpha_1}="2";
(-6,10)*+{\alpha_2}="3";
(0,10)*+{\phantom{\alpha_3}}="4";
(2,10)*+{\cdots};
(12,10)*+{\alpha_k}="5";
(0,0)*{\bullet}="1";
"1";"2" **\dir{-};
"1";"3" **\dir{-};
"1";"4" **\dir{-};
"1";"5" **\dir{-};
\endxy\]

\begin{myexample}
The above 2-stage tree is given as the sequence
\[
\xy
(0,0)*{\bullet}="1";
(-2,6)*{\bullet}="2";
(2,6)*{\bullet}="3";
"1";"2" **\dir{-};
"1";"3" **\dir{-};
\endxy\hspace{2em},\hspace{2em}
\xy
(0,0)*{\bullet}="1";
(0,6)*{\bullet}="2";
"1";"2" **\dir{-};
\endxy\hspace{2em},\hspace{2em}
\xy
(0,0)*{\bullet}="1";
\endxy\hspace{2em},\hspace{2em}
\xy
(0,0)*{\bullet}="1";
(-6,6)*{\bullet}="2";
(-2,6)*{\bullet}="3";
(2,6)*{\bullet}="4";
(6,6)*{\bullet}="5";
"1";"2" **\dir{-};
"1";"3" **\dir{-};
"1";"4" **\dir{-};
"1";"5" **\dir{-};
\endxy\]
of 1-stage trees, where the third one is degenerate.  As a 2-pasting diagram, this corresponds to the sequence
\[\xy
(0,0)*{\cdot}="1";
(8,0)*{\cdot}="2";
(16,0)*{\cdot}="3";
{\ar "1";"2"};
{\ar "2";"3"};
\endxy\hspace{1.5em},\hspace{1.5em}
\xy
(0,0)*{\cdot}="1";
(8,0)*{\cdot}="2";
{\ar "1";"2"};
\endxy\hspace{1.5em},\hspace{1.5em}
\xy
(0,0)*{\cdot}="1";
\endxy\hspace{1.5em},\hspace{1.5em}
\xy
(0,0)*{\cdot}="1";
(8,0)*{\cdot}="2";
(16,0)*{\cdot}="3";
(24,0)*{\cdot}="4";
(32,0)*{\cdot}="5";
{\ar "1";"2"};
{\ar "2";"3"};
{\ar "3";"4"};
{\ar "4";"5"};
\endxy\]
of 1-pasting diagrams.  These in turn correspond to the four horizontally composed components of the pasting diagram
\[\xy
(0,0)*{\cdot}="1";
   (9,0)*{\cdot}="2";
       {\ar@/^.8pc/ "1";"2"};
       {\ar@/_.8pc/ "1";"2"};
       {\ar@{=>} (4.5,1.75)*{};(4.5,-1.75)*{}} ;
   \endxy\hspace{1.5em},\hspace{1.5em}
\xy
(9,0)*{\cdot}="1";
   (18,0)*{\cdot}="2";
    {\ar "1";"2"};
\endxy\hspace{1.5em},\hspace{1.5em}
\xy
(0,0)*{\cdot}="1";
\endxy\hspace{1.5em},\hspace{1.5em}
\xy
(18,0)*{\cdot}="1";
   (27,0)*{\cdot}="2";
       {\ar@/^.8pc/ "1";"2"};
       {\ar@/_.8pc/ "1";"2"};
       {\ar@/^2pc/ "1";"2"};
       {\ar@/_2pc/ "1";"2"};
       {\ar@/_3pc/ "1";"2"};
       {\ar@{=>} (22.5,1.5)*{};(22.5,-1.5)*{}} ;
       {\ar@{=>} (22.5,7.25)*{};(22.5,4.75)*{}} ;
       {\ar@{=>} (22.5,-4.75)*{};(22.5,-7.25)*{}} ;
       {\ar@{=>} (22.5,-9.25)*{};(22.5,-11.75)*{}} ;
       \endxy
\]
\end{myexample}

Evidently, in order to turn the above sequence of 1-pasting diagrams into the 2-pasting diagram, we have to shift the dimensions by 1.  This notion of ``suspension'' appears again in Section~\ref{five}.

\begin{myremark}\label{nodeheight}\label{remone}
The bottom node of a tree tells us the arity of the 0-composition involved in the pasting diagram; more generally the nodes at height $b$ tell us the arity of each string of $b$-composites in the pasting diagram.
\end{myremark}

\subsection{Batanin's definition} \label{threepointtwo}

We are now ready to give Batanin's definition of $n$-category.
Note that we are using a non-algebraic version of Leinster's variant of Batanin's definition, as discussed in the Introduction.

\begin{mydefinition}
A \demph{globular operad} is a $(\cat{GSet}, T)$-operad.  An $n$-dimensional globular operad is an $(\cat{$n$-GSet}, T^{(n)})$-operad.
\end{mydefinition}

Thus we have an underlying collection
\[\xy
{\ar^{d} (0,20)*+{P};(0,10)*+{T1}};
\endxy\]
equipped with unit and multiplication maps; $P$ can be thought of as the globular set of operations, and each operation has a pasting diagram as its arity, given by the map $d$.  The remaining structure we need for our definition of $n$-categories is the notion of ``contractibility''.

\begin{mydefinition}
Let $T$ be the free strict $\omega$-category monad on \cat{GSet}. A $T$-collection
\[\xy
{\ar^{p} (0,20)*+{A};(0,10)*+{T1}};
\endxy\]
is \demph{contractible} if

\begin{itemize}

\item given any 0-cells $a,b \in A$ and a 1-cell $y:pa \lra pb \in T1$, there exists a 1-cell $x:a \lra b \in A$ such that $px = y$, and

\item for all $m \geq 1$, given any $m$-cells $a,b \in A$ that are ``parallel'' i.e. $sa = sb$ and $ta=tb$, and an $(m+1)$-cell $y:pa \lra pb \in T1$, there exists an $(m+1)$-cell $x:a \lra b \in A$ such that $px = y$.

\end{itemize}
\end{mydefinition}

Note that this can be expressed as a lifting condition just as for fibrations in spaces or simplicial sets -- by the inclusion of the $m$-sphere into the boundary of the $(m+1)$-ball.

\begin{mydefinition}
For each $m \geq 0$ the \demph{globular $m$-sphere} $S^m$ is given by the following $m$-globular set
\[\xy
(90,0)*+{2}="0";
(75,0)*+{2}="1";
(60,0)*+{2}="2";
(45,0)*+{}="3";
(35,0)*+{}="4";
(20,0)*+{2}="5";
(5,0)*+{2}="6";
(40,0)*{\cdots};
{\ar@<0.7ex>^{0} "1"; "0"};
{\ar@<-0.7ex>_{1} "1"; "0"};
{\ar@<0.7ex>^{0} "2"; "1"};
{\ar@<-0.7ex>_{1} "2"; "1"};
{\ar@<0.7ex>^{0} "3"; "2"};
{\ar@<-0.7ex>_{1} "3"; "2"};
{\ar@<0.7ex>^{0} "5"; "4"};
{\ar@<-0.7ex>_{1} "5"; "4"};
{\ar@<0.7ex>^{0} "6"; "5"};
{\ar@<-0.7ex>_{1} "6"; "5"};
\endxy\]
where 2 denotes the 2-element set $\{0,1\}$ and the maps 0 and 1 send everything to 0 and 1 respectively.

The \demph{globular $m$-ball} $B^m$ is given by the following $m$-globular set
\[\xy
(90,0)*+{2}="0";
(75,0)*+{2}="1";
(60,0)*+{2}="2";
(45,0)*+{}="3";
(35,0)*+{}="4";
(20,0)*+{2}="5";
(5,0)*+{1}="6";
(40,0)*{\cdots};
{\ar@<0.7ex>^{0} "1"; "0"};
{\ar@<-0.7ex>_{1} "1"; "0"};
{\ar@<0.7ex>^{0} "2"; "1"};
{\ar@<-0.7ex>_{1} "2"; "1"};
{\ar@<0.7ex>^{0} "3"; "2"};
{\ar@<-0.7ex>_{1} "3"; "2"};
{\ar@<0.7ex>^{0} "5"; "4"};
{\ar@<-0.7ex>_{1} "5"; "4"};
{\ar@<0.7ex>^{0} "6"; "5"};
{\ar@<-0.7ex>_{1} "6"; "5"};
\endxy.\]
\end{mydefinition}
Thus there is an obvious inclusion $S^m \hookrightarrow B^{m+1}$, and a $T$-collection
\[\xy
{\ar^{p} (0,20)*+{A};(0,10)*+{T1}};
\endxy\]
is contractible if and only if for all $m \geq 0$ every commutative square

\[\xy
(0,20)*+++{\makebox[0.8em][l]{$S^m$}}="1";
(20,20)*+{A}="2";
(0,0)*+{B^{m+1}}="3";
(20,0)*+{T1}="4";
{\ar^{} "1";"2"};
{\ar^{} "2";"4"};
{\ar@{^{(}->}_{} "1";"3"};
{\ar_{} "3";"4"};
\endxy\]
has a lift
\[\xy
(0,20)*+++{\makebox[0.8em][l]{$S^m$}}="1";
(20,20)*+{A}="2";
(0,0)*+{B^{m+1}}="3";
(20,0)*+{T1}="4";
{\ar^{} "1";"2"};
{\ar^{} "2";"4"};
{\ar@{^{(}->}_{} "1";"3"};
{\ar_{} "3";"4"};
{\ar@{-->}_{} "3";"2"};
\endxy\]
making both triangles commute.  For the $n$-dimensional case we must ``collapse'' everything above the $n$th dimension.  Thus we have the following $n$-dimensional definition.

\begin{mydefinition}
Let $n \geq 0$ and $T^{(n)}$ the free strict $n$-category monad on \cat{$n$-GSet}.  A $T^{(n)}$-collection
\[\xy
{\ar^{p} (0,20)*+{A};(0,10)*+{T^{(n)}1}};
\endxy\]
is \demph{contractible} if

\begin{itemize}

\item given any 0-cells $a,b \in A$ and a 1-cell $y:pa \lra pb \in T1$, there exists a 1-cell $x:a \lra b \in A$ such that $px = y$,

\item for all $1 \leq m < n$, given any $m$-cells $a,b \in A$ that are ``parallel'' i.e. $sa = sb$ and $ta=tb$, and an $(m+1)$-cell $y:pa \lra pb \in T1$, there exists an $(m+1)$-cell $x:a \lra b \in A$ such that $px = y$.

\item given any parallel $n$-cells $a,b \in A$ and an $(n+1)$-cell $y:pa \lra pb \in T1$, we have $a = b$.

\end{itemize}
\end{mydefinition}

\begin{mydefinition}
A \demph{contractible globular operad} is a globular operad whose underlying $T$-collection is contractible.  Likewise for the $n$-dimensional case.
\end{mydefinition}

\begin{mydefinition}
A \demph{globular weak $\omega$-category} is an algebra for any contractible globular operad.  Likewise for the $n$-dimensional case.  \end{mydefinition}

\section{Comparison}\label{comparison}\label{four}

In this section we show how to start with an iterative operadic theory of $n$-categories and construct for each $n \geq 0$ an $n$-dimensional contractible globular operad whose algebras are the $n$-categories we started with.  The converse will in general not be possible since an iterative operadic theory of $n$-categories always has strict interchange, whereas a Batanin-style globular theory does not.

The data for an iterative operadic theory of $n$-categories is essentially just a series of operads $P_i$.  For each $n\geq 1$ we combine the operads $P_0, P_1, \ldots, P_{n-1}$ to produce an $n$-dimensional globular operad $Q^{(n)}$.  We then show that it is contractible whenever the $P_i$ are all contractible.  We will use the characterisation of a globular operad as a cartesian monad $Q$ equipped with a cartesian natural transformation $Q \Rightarrow T$.

We will follow the method used by Leinster in \cite[Appendix F]{lei8} to construct the free strict $n$-category monad $T^{(n)}$ by induction.  Leinster's approach uses three essential ingredients:

\begin{enumerate}
\item the free $\cl{V}$-category monad $\cat{fc}_{\cl{V}}$, for suitable categories \cl{V},
\item the ``suspension'' 2-functor
\[\cat{CAT} \lra \cat{CAT}\]
which sends $\cl{V}$ to \cat{\cl{V}-Gph}, and $F: \cl{V} \lra \cl{W}$ to $F_*$ which is the identity on objects and $F$ on hom-objects, preserving monadic adjunctions, cartesian monads and distributive laws, and
\item for suitable monads $T$ on \cl{V} a distributive law
\[ T_* \circ \cat{fc}_\cl{V} \Rightarrow \cat{fc}_\cl{V} \circ T_*\]
whose resulting composite monad $\cat{fc}_\cl{V} \circ T_*$ is the free \cat{$\cl{V}^T$-Cat} monad \cite[Proposition F.1.1]{lei8}.
\end{enumerate}

\noindent Then the monad $T^{(n)}$ is produced by induction, by putting $\cl{V} = \cat{$(n-1)$-Gph}$ and $T= T^{(n-1)}$.  Thus $\cl{V}^T = \bncat{n-1}$ and the distributive law
\[ T_* \circ \cat{fc}_\cl{V} \Rightarrow \cat{fc}_\cl{V} \circ T_*\]
gives us a monad whose category of algebras is $\cat{\bncat{n-1}-Cat}=\ncat{n}$, so we have constructed $T^{(n)}$ as required.

In order to construct the monad $Q^{(n)}$ for weak $n$-categories, we will generalise the above method to include the action of the operads $P_i$.  Afterwards in Section~\ref{fourpointtwo} we will describe how to construct this monad by a distributive series of monads; this is not necessary for the proofs, but sheds some light on the situation and provides a comparison with the analysis of the free strict $n$-category monad in \cite{che17}.

\subsection{Construction of a globular operad}\label{fourpointone}

We are going to copy the free $n$-category constructions in \cite[Appendix F]{lei8} but with composition parametrised by a series of operads $P_i$.  Thus, we need to replace the free $\cl{V}$-category functor in the construction with the free $(\cl{V},P)$-category functor, for suitable \cl{V} and $P$.  We then have three ingredients analogous to those described above and can make the analogous constructions, with some modification whenever the action of the $P_i$ is involved.  We therefore need the following three essential ingredients:

\begin{enumerate}
\item the free $(\cl{V},P)$-category monad $\cat{fc}_{(\cl{V},P)}$, for operads $P$ in suitable categories \cl{V},
\item the suspension functor as above, and
\item a certain distributive law, generalised from the one above.
\end{enumerate}

We have already constructed the monad $\cat{fc}_{(\cl{V},P)}$ in Section~\ref{twopointtwo}.  The distributive law is given by the following result, which generalises the distributive law provided in the proof of Proposition F.1.1 in \cite{lei8}.

\begin{theorem}\label{keynew}\label{theoremfourpointone}
\label{thmfourpointone}
Let \cl{V} be a presheaf category, $T$ a monad on \cl{V} that preserves coproducts, and $P$ an operad in $\cl{V}^T$, the category of algebras of $T$.  Write $U$ for the forgetful functor $\cl{V}^T \lra \cl{V}$, and $UP$ for the operad in $\cl{V}$ given by $(UP)(k) = U(P(k))$. Then we have monads on \cat{$\cl{V}$-Gph} given by $T_*$ and $\cat{fc}_{(\cl{V},UP)}$, and a distributive law
\[ \lambda: T_* \circ \cat{fc}_{(\cl{V},UP)} \Rightarrow \cat{fc}_{(\cl{V},UP)} \circ T_*\]
whose resulting composite monad $\cat{fc}_{(\cl{V},UP)} \circ T_*$ is the free \cat{$(\cl{V}^T,P)$-Cat} monad, that is
\[ (\cat{\cl{V}-Gph})^{\cat{{\small fc}}_{(\cl{V},UP)} \circ T_*} \cong \cat{$(\cl{V}^T,P)$-Cat}.\]

\end{theorem}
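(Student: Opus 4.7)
The plan is to follow the strategy of Leinster's Proposition F.1.1 verbatim, but with the free \cl{V}-category monad $\cat{fc}_{\cl{V}}$ replaced throughout by the operad-parametrised version $\cat{fc}_{(\cl{V},UP)}$ constructed in Section~\ref{twopointtwo}. All the action happens componentwise on hom-objects, because the suspension 2-functor sending $\cl{V}$ to $\cat{\cl{V}-Gph}$ is the identity on object-sets. The core of the work is thus to produce a natural transformation
\[\lambda: T_* \circ \cat{fc}_{(\cl{V},UP)} \Rightarrow \cat{fc}_{(\cl{V},UP)} \circ T_*,\]
to verify the four coherence axioms of a distributive law, and then to invoke Beck's theorem both to obtain the composite monad $\cat{fc}_{(\cl{V},UP)} \circ T_*$ and to read off its algebras.

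First I will construct $\lambda$. It is the identity on objects. On the hom-object from $a_0$ to $a_k$ I must produce a map
\[T\left(\coprod_{k \geq 0,\, \mathbf{a}} UP(k) \times A(a_{k-1},a_k) \times \cdots \times A(a_0,a_1) \right) \lra \coprod_{k \geq 0,\, \mathbf{a}} UP(k) \times TA(a_{k-1},a_k) \times \cdots \times TA(a_0,a_1).\]
Since $T$ preserves coproducts by hypothesis, the left-hand side decomposes summand-wise, so it suffices to give the component on each $(k,\mathbf{a})$-summand. I take the canonical product-comparison map $T(X_0 \times \cdots \times X_k) \lra TX_0 \times \cdots \times TX_k$ (the tuple of $T$ applied to the projections, which needs no preservation-of-products hypothesis), and post-compose its first factor with the $T$-algebra action $TUP(k) \lra UP(k)$, which is available precisely because $P$ is an operad in $\cl{V}^T$.

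Next I verify that $\lambda$ is a distributive law. The two unit axioms reduce, via the coproduct decomposition, to the operad unit of $P$ being a $T$-algebra map (automatic, since the operad lives in $\cl{V}^T$) and to naturality of the product-comparison maps. The $T$-multiplication axiom is a standard consequence of naturality once one uses that $T$ preserves coproducts. The $\cat{fc}_{(\cl{V},UP)}$-multiplication axiom is meatier: unfolded, it says that operadic substitution
\[UP(k) \times UP(k_1) \times \cdots \times UP(k_k) \lra UP(k_1 + \cdots + k_k)\]
intertwines with the $T$-algebra actions on each factor, which again holds because operad composition in $\cl{V}^T$ takes place in the category of $T$-algebras.

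Finally I identify algebras. By Beck's theorem, a $\cat{fc}_{(\cl{V},UP)} \circ T_*$-algebra on a \cl{V}-graph $A$ is a $T_*$-algebra structure together with a $\cat{fc}_{(\cl{V},UP)}$-algebra structure cohering via $\lambda$. The $T_*$-algebra structure promotes each hom-object $A(a,a')$ to an object of $\cl{V}^T$, i.e.\ makes $A$ into a $\cl{V}^T$-graph; the $\cat{fc}_{(\cl{V},UP)}$-algebra structure is a $(\cl{V},UP)$-category structure by Proposition~\ref{sigmap}; and the $\lambda$-compatibility says precisely that each composition morphism $UP(k) \times A(a_{k-1},a_k) \times \cdots \times A(a_0,a_1) \to A(a_0,a_k)$ is a $T$-algebra map, equivalently that it comes from a morphism $P(k) \otimes A(a_{k-1},a_k) \otimes \cdots \otimes A(a_0,a_1) \to A(a_0,a_k)$ in $\cl{V}^T$. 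This is exactly the data of a $(\cl{V}^T,P)$-category, giving the claimed equivalence. I expect the main obstacle to be the $\cat{fc}_{(\cl{V},UP)}$-multiplication axiom, since it requires tracking how operadic substitution interacts with the product-comparison maps across both $\coprod$-summands and making crucial use that the operad structure of $P$ lives in $\cl{V}^T$ rather than merely in $\cl{V}$.
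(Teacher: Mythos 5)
Your proposal is correct and follows essentially the same route as the paper: the paper also constructs $\lambda$ componentwise on hom-objects by combining the coproduct-preservation of $T$, the canonical comparison map into the product, and the $T$-algebra action $TU(P(k)) \to U(P(k))$, and then leaves the distributive-law axioms and the identification of algebras as routine. Your additional detail on verifying the axioms and on reading off $(\cl{V}^T,P)$-categories as the algebras simply fills in what the paper declares ``straightforward to check''.
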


\begin{proof}
Since $T$ preserves coproducts we have
\[ (T_* \circ \cat{fc}_{(\cl{V},UP)})  (A)(a,a') = \coprod_{\begin{array}{c} {\scriptstyle k\geq 0} \\ {\scriptstyle a=a_0, \ldots, a_k=a' \in A(0)} \end{array}} \hspace{-3em}T\big(\ \ U(P(k) \times A(a_{k-1}, a_k) \times \cdots \times A(a_0, a_1)\ \big)\]
and also we have
\[ (\cat{fc}_{(\cl{V},UP)} \circ T_*) (A)(a,a') = \coprod_{\begin{array}{c} {\scriptstyle k\geq 0} \\ {\scriptstyle a=a_0, \ldots, a_k=a' \in A(0)} \end{array}} \hspace{-3em}U(P(k)) \times T\big(A(a_{k-1}, a_k)\big) \times \cdots \times T\big(A(a_0, a_1)\big).\]
Now the universal property of the product
\[U(P(k)) \times T\big(A(a_{k-1}, a_k)\big) \times \cdots \times T\big(A(a_0, a_1)\big)\]
together with the algebra action
\[TU(P(k)) \lra U(P(k))\]
induce a canonical morphism
\[\xy
(0,13)*+{T\big(\ (U(P(k)) \times A(a_{k-1}, a_k) \times \cdots \times A(a_0, a_1)\ \big)}="1";
(0,0)*+{U(P(k)) \times T\big(A(a_{k-1}, a_k)\big) \times \cdots \times T\big(A(a_0, a_1)\big)}="2";
{\ar^{} "1";"2"};
\endxy\]
and this gives us the components of a natural transformation $\lambda$ as required.  It is straightforward to check that $\lambda$ is a distributive law and that we have the isomorphism of categories required. \end{proof}

Note that this distributive law will correspond to the interchange laws in our weak $n$-categories; we discuss this further at the end of Section~\ref{fourpointtwo}.

\begin{myexample}
If $P=1$ then $\cat{fc}_{(\cl{V}, UP)} = \cat{fc}_{\cl{V}}$ and the result becomes exactly the result of Leinster.
\end{myexample}

We can now use this theorem to construct the monads $Q^{(n)}$ for weak $n$-categories.  Note that \emph{a priori} we have monads on \cat{$\cl{V}_{n-1}$-Gph}, but we seek to construct monads on \cat{$n$-GSet}.

\begin{theorem}
Let $P_0, P_1, \ldots, P_{i}, \ldots$ give an iterative operadic theory of $n$-categories, with a resulting category $\cl{V}_n$ of $n$-categories for each $n \geq 0$.  Then for each $n \geq 0$ there is a monad $Q^{(n)}$ on \cat{$n$-GSet} whose category of algebras is $\cl{V}_n$.
\end{theorem}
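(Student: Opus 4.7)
The plan is to prove this by induction on $n$, using Theorem~\ref{theoremfourpointone} as the engine for the inductive step. The induction needs to carry some extra baggage in order to feed itself, so the statement I would actually prove is the following strengthened form: for each $n\geq 0$ there is a cartesian, coproduct-preserving monad $Q^{(n)}$ on $\cat{$n$-GSet}$ such that $(\cat{$n$-GSet})^{Q^{(n)}} \iso \cl{V}_n$. The cartesianness and coproduct-preservation are precisely what the next round of Theorem~\ref{theoremfourpointone} demands, so they must be propagated through the induction.

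For the base case $n=0$, we have $\cl{V}_0 = \cat{Set} = \cat{$0$-GSet}$ and we take $Q^{(0)} = \id$, which is trivially cartesian and preserves coproducts. For the inductive step, suppose $Q^{(n-1)}$ on $\cat{$(n-1)$-GSet}$ has been constructed, is cartesian and coproduct-preserving, and has algebra category $\cl{V}_{n-1}$. Using Lemma~\ref{lemmathreepointtwo} we identify $\cat{$n$-GSet}$ with $\cat{\cl{V}-Gph}$ for $\cl{V} = \cat{$(n-1)$-GSet}$, which is a presheaf category. The data of the iterative operadic theory gives an operad $P_{n-1}$ in $\cl{V}_{n-1} = \cl{V}^{Q^{(n-1)}}$. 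We may therefore apply Theorem~\ref{theoremfourpointone} with this $\cl{V}$, with $T = Q^{(n-1)}$, and with $P = P_{n-1}$. The theorem produces a distributive law
\[\lambda: (Q^{(n-1)})_* \circ \cat{fc}_{(\cl{V},UP_{n-1})} \Rightarrow \cat{fc}_{(\cl{V},UP_{n-1})} \circ (Q^{(n-1)})_*\]
and hence a composite monad
\[Q^{(n)} := \cat{fc}_{(\cl{V},UP_{n-1})} \circ (Q^{(n-1)})_*\]
on $\cat{\cl{V}-Gph} = \cat{$n$-GSet}$ whose algebra category is $\cat{$(\cl{V}^{Q^{(n-1)}}, P_{n-1})$-Cat} = \cat{$(\cl{V}_{n-1},P_{n-1})$-Cat} = \cl{V}_n$, as required.

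The main technical obstacle is maintaining the inductive hypotheses (cartesianness and coproduct-preservation) on $Q^{(n)}$ itself, since without them the next stage of the induction cannot appeal to Theorem~\ref{theoremfourpointone}. For the suspension $(Q^{(n-1)})_*$, both properties are inherited pointwise from $Q^{(n-1)}$ because the suspension acts as the identity on object sets and as the given functor on hom-objects, and pullbacks and coproducts in $\cat{\cl{V}-Gph}$ are computed componentwise over a fixed object set. For $\cat{fc}_{(\cl{V},UP_{n-1})}$, cartesianness is guaranteed by Theorem~\ref{operadmonad} together with the fact that it is the monad associated to the generalised operad $\Sigma P_{n-1}$ on the cartesian category $\cat{\cl{V}-Gph}$ (Proposition~\ref{sigmap} and Theorem~\ref{fcv}); coproduct-preservation follows from the explicit formula for $\cat{fc}_{(\cl{V},UP_{n-1})}$ derived in Section~\ref{twopointtwo}, since that formula is built from products, coproducts, and the operations $P_{n-1}(k) \times -$, all of which preserve coproducts under our standing hypotheses on $\cl{V}$. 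Finally, both properties are preserved under composition of monads via a distributive law, so $Q^{(n)}$ inherits them and the induction closes.
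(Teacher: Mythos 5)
Your proposal is correct and follows essentially the same route as the paper: induction with $Q^{(0)}=\id$, then the inductive step via Theorem~\ref{theoremfourpointone} applied to $\cl{V}=\cat{$(n-1)$-GSet}$, $T=Q^{(n-1)}$, $P=P_{n-1}$, yielding $Q^{(n)}=\cat{fc}_{(\cl{V},UP_{n-1})}\circ(Q^{(n-1)})_*$, with coproduct-preservation carried as the extra inductive hypothesis. The only (harmless) difference is that you also thread cartesianness through the induction, whereas the paper defers that to the subsequent theorem constructing the cartesian natural transformation $Q^{(n)}\Rightarrow T^{(n)}$; note that Theorem~\ref{theoremfourpointone} itself only demands coproduct-preservation, not cartesianness.
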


\begin{proof}
By induction.  First set $Q^{(0)} = \cat{id}$ as a monad on $\cl{V}_0 = \cat{Set}$.

Now let $n>0$.  We use Theorem~\ref{keynew} with
\begin{itemize}
\item $\cl{V} = \cat{$(n-1)$-GSet}$,
\item $T=Q^{(n-1)}$ so $\cl{V}^T = \cl{V}_{n-1}$ and $U$ is the forgetful functor
    \[\cl{V}_{n-1} \lra \cat{$(n-1)$-GSet},\]
\item $P=P_{n-1}$, which is an operad in $\cl{V}^T = \cl{V}_{n-1}$ as required,
\end{itemize}
and we set
\[Q^{(n)} = \cat{fc}_{(\cl{V}, UP)} \circ (T)_*\]

Then assuming $T$ preserves coproducts, we know by Theorem~\ref{keynew} that we have a distributive law
\[ \lambda: T_* \circ \cat{fc}_{(\cl{V},UP)} \Rightarrow \cat{fc}_{(\cl{V},UP)} \circ T_*\]
and the induced composite monad $\cat{fc}_{(\cl{V},UP)} \circ T_*$ has category of algebras
\[\begin{array}{rcl}
 \cat{$(\cl{V}^T,P)$-Cat} &=& \cat{$(\cl{V}_{n-1}, P_{n-1})$-Cat} \\
 &=& \cl{V}_n
 \end{array}\]
as required.  To make the induction go through, it remains to check that the induced monad preserves coproducts.  Now a routine calculation shows that the monad $\cat{fc}_{(\cl{V}, UP)}$ preserves coproducts, since products and coproducts commute in \cat{$(n-1)$-GSet}; also $T_*$ preserves coproducts if $T$ does, so by induction the result follows.
\end{proof}

We can now show almost immediately that this gives a globular operad; we use the characterisation of a globular operad as a cartesian monad $Q$ on \cat{GSet} equipped with a cartesian natural transformation $Q \Rightarrow T$.

\begin{theorem}
For each $n>0$ the monad $Q^{(n)}$ as above is cartesian, and there is a cartesian natural transformation $Q^{(n)} \Rightarrow T^{(n)}$.  Thus we have a globular operad whose category of algebras is $\cl{V}_n$.
\end{theorem}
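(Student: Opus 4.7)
\begin{prf}
The plan is to prove both assertions by induction on $n$, with $Q^{(0)}=\cat{id}$ on \cat{Set} being trivially cartesian and equipped with the identity natural transformation to $T^{(0)}=\cat{id}$. For the inductive step we need three ingredients: cartesianness of $\cat{fc}_{(\cl{V},UP)}$, cartesianness of the suspension $(Q^{(n-1)})_*$, and cartesianness of the distributive law $\lambda$ from Theorem~\ref{theoremfourpointone}; these together imply that the composite monad $Q^{(n)}=\cat{fc}_{(\cl{V},UP_{n-1})}\circ(Q^{(n-1)})_*$ is cartesian.

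First I would observe that $\cat{fc}_{(\cl{V},UP_{n-1})}$ is, by definition, the monad associated to the generalised operad $\Sigma P_{n-1}$ of Proposition~\ref{sigmap}; hence by Theorem~\ref{operadmonad} it is cartesian and comes with a cartesian natural transformation to $\cat{fc}_\cl{V}$ (corresponding under that equivalence to the unique operad morphism $\Sigma P_{n-1}\lra \Sigma 1$). Next, the suspension $2$-functor $(-)_{*}$ sends the cartesian monad $Q^{(n-1)}$ (given by the inductive hypothesis) to a cartesian monad $(Q^{(n-1)})_*$ on \cat{\cl{V}-Gph}, because pullbacks, units and multiplications in \cat{\cl{V}-Gph} are computed pointwise from those in $\cl{V}$; the same argument shows that any cartesian natural transformation suspends to a cartesian one.

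The main obstacle is verifying that the distributive law $\lambda$ constructed in the proof of Theorem~\ref{theoremfourpointone} is cartesian, since its components are built from an interaction of products, coproducts, and the algebra action $TU(P(k))\lra U(P(k))$. I would check this directly: on a morphism $f:A\lra B$ of $\cl{V}$-graphs, the naturality square for $\lambda$ on hom-objects decomposes, using coproduct preservation of $T$, into a coproduct of squares indexed by $k$ and tuples $(a_0,\ldots,a_k)$. Each such square factors as a product of the cartesian square for the identity on $UP(k)$ and the cartesian squares witnessing that $T$ preserves the products $A(a_{i-1},a_i)\lra B(fa_{i-1},fa_i)$; since presheaf-category coproducts commute with pullbacks, the coproduct of pullback squares is a pullback square, so $\lambda$ is cartesian. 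A standard general fact then gives that the composite of two cartesian monads via a cartesian distributive law is itself cartesian, completing the proof that $Q^{(n)}$ is cartesian.

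To assemble the cartesian natural transformation $\alpha^{(n)}:Q^{(n)}\Rightarrow T^{(n)}$, I would paste two cartesian transformations: $\cat{fc}_{(\cl{V},UP_{n-1})}\Rightarrow \cat{fc}_\cl{V}$ (from the first paragraph) and $(\alpha^{(n-1)})_*:(Q^{(n-1)})_*\Rightarrow (T^{(n-1)})_*$ (the suspension of the inductive cartesian transformation, which is cartesian by suspension functoriality). Their horizontal composite lands in $\cat{fc}_\cl{V}\circ(T^{(n-1)})_*=T^{(n)}$. Compatibility with the monad structures follows from naturality of the distributive laws: both $\lambda$ for $(Q^{(n-1)},P_{n-1})$ and the Leinster distributive law for $(T^{(n-1)},1)$ are compatible under $\alpha^{(n-1)}$ and the operad morphism $P_{n-1}\lra 1$, so the induced map of composite monads respects units and multiplications. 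Finally, applying Theorem~\ref{operadmonad} to $(Q^{(n)},\alpha^{(n)})$ exhibits $Q^{(n)}$ as the monad associated to an $n$-dimensional globular operad, whose algebras by construction are exactly $\cl{V}_n$.
\end{prf}
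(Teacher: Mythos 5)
Your proof is correct and follows essentially the same route as the paper's: induction on $n$, cartesianness of $\cat{fc}_{(\cl{V},UP)}$ via Theorem~\ref{operadmonad}, preservation of cartesianness by the suspension, and the transformation to $T^{(n)}$ assembled from the operad map $UP \lra 1$ together with the suspended inductive transformation $Q^{(n-1)} \Rightarrow T^{(n-1)}$. The only difference is that you explicitly verify that the distributive law $\lambda$ is cartesian, a point the paper subsumes under the remark that the composite of cartesian monads is cartesian; this is welcome extra precision rather than a divergence.
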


\begin{proof}
By induction.  Clearly $Q^{(0)}$ is cartesian.  Now let $n>0$.  We know that $\cat{fc}_{(\cl{V},UP)}$ is cartesian as it is the monad associated to an operad (see Theorem~\ref{operadmonad}), and it is a straightforward exercise to check that if monads $S$ and $T$ are cartesian, then $ST$ is cartesian.  Thus $Q^{(n)} = \cat{fc}_{(\cl{V},UP)} \circ (Q^{(n-1)})_*$ is cartesian.

Now we need to exhibit a cartesian natural transformation $Q^{(n)} \Rightarrow T^{(n)}$.  Again we proceed by induction.  $Q^{(0)} = T^{(0)}$, so let $n>0$.  We know that
\[\begin{array}{rcl}
T^{(n)} &=& \cat{fc}_{\cl{V}} \circ (T^{(n-1)})_*\\
&=& \cat{fc}_{(\cl{V},1)} \circ (T^{(n-1)})_*
\end{array}\]
where here 1 is the terminal operad.  So we need a natural transformation
\[\cat{fc}_{(\cl{V},UP)} \circ (Q^{(n-1)})_* \Rightarrow \cat{fc}_{(\cl{V},1)} \circ (T^{(n-1)})_*.\]
Now, since $1$ is terminal, we have an operad map $UP \lra 1$ which induces a cartesian natural transformation $\cat{fc}_{(\cl{V}, UP)} \Rightarrow \cat{fc}_{(\cl{V}, 1)}$.  Composing this with the cartesian natural transformation $Q^{(n-1)} \Rightarrow T^{(n-1)}$ gives the natural transformation as required.
\end{proof}

This result gives us the first part of the comparison theorem which is the main result of this work: we have constructed, for any iterative operadic theory of $n$-categories, a cartesian natural transformation $Q^{(n)} \Rightarrow T^{(n)}$ and thus a globular operad.  It remains to show that the resulting operad is contractible; this depends on the contractibility of each of the operads $P_i$.  In order to prove this we will need to use the explicit description of the globular operad associated to the $P_i$.

We need to give, for each pasting diagram $\alpha \in T1$ the set of operations of $Q^{(n)}1$ lying over $\alpha$; we will write this as $Q^{(n)}_\alpha$.  We will also give the globular source and target maps in $Q^{(n)}1$ as these are important for examining contractibility.  We will use the tree notation for pasting diagrams.  Recall that an $m$-pasting diagram is described by a tree of dimension $m$.  For example, the following is a 4-stage tree:
%
\[\xy
(5,32)*{\bullet}="14";
(9,32)*{\bullet}="15";
(14,32)*{\bullet}="16";
(18,32)*{\bullet}="17";
(7,24)*{\bullet}="11";
(14,24)*{\bullet}="12";
(18,24)*{\bullet}="13";
(7,16)*{\bullet}="5";
(11,16)*{\bullet}="6";
(16,16)*{\bullet}="7";
(20,16)*{\bullet}="8";
(24,16)*{\bullet}="9";
(31,16)*{\bullet}="10";
(9,8)*{\bullet}="2";
(20,8)*{\bullet}="3";
(31,8)*{\bullet}="4";
(20,0)*{\bullet}="1";
"1";"2" **\dir{-};
"1";"3" **\dir{-};
"1";"4" **\dir{-};
"2";"5" **\dir{-};
"2";"6" **\dir{-};
"3";"7" **\dir{-};
"3";"8" **\dir{-};
"3";"9" **\dir{-};
"4";"10" **\dir{-};
"5";"11" **\dir{-};
"7";"12" **\dir{-};
"7";"13" **\dir{-};
"11";"14" **\dir{-};
"11";"15" **\dir{-};
"12";"16" **\dir{-};
"13";"17" **\dir{-};
\endxy\]
Recall that the nodes of height $b$ describe pasting along bounding $b$-cells.  The globular source and target in $T1$ are found by forgetting the top level of the tree.  In the above example we get the following 3-stage tree as both source and target:
%
\[\xy
(7,24)*{\bullet}="11";
(14,24)*{\bullet}="12";
(18,24)*{\bullet}="13";
(7,16)*{\bullet}="5";
(11,16)*{\bullet}="6";
(16,16)*{\bullet}="7";
(20,16)*{\bullet}="8";
(24,16)*{\bullet}="9";
(31,16)*{\bullet}="10";
(9,8)*{\bullet}="2";
(20,8)*{\bullet}="3";
(31,8)*{\bullet}="4";
(20,0)*{\bullet}="1";
"1";"2" **\dir{-};
"1";"3" **\dir{-};
"1";"4" **\dir{-};
"2";"5" **\dir{-};
"2";"6" **\dir{-};
"3";"7" **\dir{-};
"3";"8" **\dir{-};
"3";"9" **\dir{-};
"4";"10" **\dir{-};
"5";"11" **\dir{-};
"7";"12" **\dir{-};
"7";"13" **\dir{-};
\endxy\]

\begin{mydefinition}
Let $A \in \cl{V}_n$ and write $U$ for the forgetful functor $\cl{V}_n \lra \cat{$n$-GSet}$.  Then the \demph{$m$-cells of $A$} are simply the $m$-cells of $UA$.
\end{mydefinition}

\begin{proposition}\label{propfourpointfive}
\label{propositionfourpointfive}
For all $n > 0$, an element of $Q^{(n)}(1)$ lying over a tree $\alpha \in T^{(n)}(1)$ consists of a label for each node of the tree, where given $k \geq 0$ and $0 \leq b < m \leq n$, a node of arity $k$ at height $b$ in an $m$-stage tree must be an $(m-b-1)$-cell of $P_{n-b-1}(k)$.

The source (respectively target) of this element is found by
\begin{enumerate}
\item forgetting the top level of the tree and the top level of labels, and
\item replacing each remaining label with its source (respectively target).
\end{enumerate}
\end{proposition}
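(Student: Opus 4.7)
The natural strategy is induction on $n$, peeling off one layer of the recursive definition $Q^{(n)} = \cat{fc}_{(\cl{V},UP_{n-1})} \circ (Q^{(n-1)})_*$ and matching it with the tree/subtree decomposition of pasting diagrams recalled in Section~\ref{threepointone}.

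The plan is as follows. For the base case $n=0$ the claim is vacuous (or, for $n=1$, $Q^{(1)}(1) = \cat{fc}_{(\cat{Set},P_0)}(1)$ and an $m$-cell over a $1$-stage tree with $k$ leaves is just an element of $P_0(k)_0$, which is the required label; there are no other nodes). For the inductive step, I would first compute $Q^{(n)}(1)$ explicitly using the formula from the suspension construction (Proposition~\ref{sigmap} and the definition of $\cat{fc}_{(\cl{V},UP)}$). Since $(Q^{(n-1)})_*(1)$ is the $\cl{V}$-graph with a single object whose unique hom-object is $Q^{(n-1)}(1) \in \cat{$(n-1)$-GSet}$, applying $\cat{fc}_{(\cl{V},UP_{n-1})}$ gives a one-object $\cl{V}$-graph with hom-object
\[
\coprod_{k\geq 0} UP_{n-1}(k) \times Q^{(n-1)}(1)^k
\]
in $\cat{$(n-1)$-GSet}$.

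Then I would read off cells. For $1 \leq m \leq n$, an $m$-cell of $Q^{(n)}(1)$ corresponds to an $(m-1)$-cell of this hom-object, which, because products and coproducts in the presheaf category $\cat{$(n-1)$-GSet}$ are computed dimensionwise, decomposes as a choice of $k \geq 0$, an $(m-1)$-cell of $UP_{n-1}(k)$, and an ordered $k$-tuple of $(m-1)$-cells of $Q^{(n-1)}(1)$. Matching this against the tree picture: the arity $k$ is the arity of the bottom ($b=0$) node of an $m$-stage tree $\alpha$, and this node is labelled by an $(m-1)$-cell of $P_{n-1}(k)$, i.e.\ an $(m-0-1)$-cell of $P_{n-0-1}(k)$, as required. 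The $k$-tuple of $(m-1)$-cells of $Q^{(n-1)}(1)$ corresponds, by the inductive hypothesis, to labellings of the $k$ subtrees $\alpha_1,\ldots,\alpha_k$, where a node at height $b'$ in the $(m-1)$-stage subtree is an $((m-1)-b'-1)$-cell of $P_{(n-1)-b'-1}(k')$. A node at height $b' $ in a subtree sits at height $b = b'+1$ in $\alpha$, so substituting $b' = b-1$ gives an $(m-b-1)$-cell of $P_{n-b-1}(k')$, matching the claim for all $b \geq 1$. The decomposition of an $m$-pasting diagram as a sequence of $(m-1)$-pasting diagrams (recalled just before Example~\ref{pdex}) is what ensures this tree bookkeeping matches the algebraic decomposition.

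For the source/target formula, I would use that in any presheaf category the source/target of a cell in a product (or coproduct) is taken componentwise. The source of the $(m-1)$-cell of $UP_{n-1}(k)$ labelling the bottom node is its source as a cell of $P_{n-1}(k)$; the source of each $(m-1)$-cell of $Q^{(n-1)}(1)$, by the inductive hypothesis, is obtained by deleting the top level of the corresponding subtree and replacing each remaining label with its source. Assembled at the level of $\alpha$, this exactly says: forget the top level of $\alpha$ (the top level of every subtree simultaneously) together with its labels, then replace each remaining label by its source. A consistency check, which I would include explicitly, is that an old label at height $b$ (an $(m-b-1)$-cell of $P_{n-b-1}(k')$) has source an $(m-b-2)$-cell of $P_{n-b-1}(k')$, which is exactly the correct label for a node at height $b$ of the resulting $(m-1)$-stage tree.

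The main obstacle is purely the index arithmetic: carefully tracking the shift between the dimension of a cell in an $n$-globular set and the dimension of the corresponding cell in its $(n-1)$-globular set of hom-objects, together with the parallel shift in tree height $b \leftrightarrow b-1$ and the operad index $P_{n-b-1} \leftrightarrow P_{(n-1)-(b-1)-1}$. Once these shifts are lined up, the substantive content (the explicit form of $Q^{(n)}(1)$, the dimensionwise behaviour of (co)products in presheaves, and the tree/sequence correspondence) is already in place, and the proof reduces to bookkeeping.
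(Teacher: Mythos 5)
Your proposal is correct and follows essentially the same route as the paper: induction on $n$ with base case $n=1$, unwinding $Q^{(n)}(1)$ via the formula $\bigl(Q^{(n)}(1)\bigr)_m = \bigl(\coprod_{k\geq 0} UP_{n-1}(k)\times (Q^{(n-1)}(1))^k\bigr)_{m-1}$ and matching the bottom node to the $P_{n-1}(k)$-factor and the subtrees to the $Q^{(n-1)}(1)$-factors with the height shift $b\mapsto b-1$. Your explicit treatment of the source/target bookkeeping (and the consistency check on label dimensions) is slightly more detailed than the paper's, which leaves that part implicit, but the substance is the same.
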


\begin{proof}
We proceed by induction over $n$.  For $n=1$ we are considering 1-stage trees, which are determined simply by an integer $k \geq 0$.   Now $Q^{(1)}$ is the monad induced by the adjunction
\[\xy
(40,0)*+{\cat{$1$-GSet}}="1";
(0,0)*+{\cat{$( \cat{Set}, P_0)$-Cat}}="2";
(22,0)*{{\scriptstyle \top}};
{\ar@<1ex>^<<<<<<<<<<<{\mbox{\textsf{forgetful}}} "2";"1"};
{\ar@<1ex>^<<<<<<<<<<<{\mbox{\textsf{free}}} "1";"2"};
\endxy.\]
This monad leaves 0-cells unchanged; 1-cells of $Q^{(1)}(1)$ are given by the set
\[\coprod_{k \geq 0} P_0(k)_0.\]
The canonical map to $T^{(1)}(1)$ is given by mapping elements of $P_0(k)_0$ to the arity $k$, as required.

Now let $n>1$.  We know that
\[Q^{(n)} = \cat{fc}_{(\cl{V}, UP)} \circ (Q^{(n-1)})_*\]
where $\cl{V} = \cat{$n$-GSet}$ and $P = P_{n-1}$.  So, using the definition of the monad $\cat{fc}_{(\cl{V},UP)}$ we see that $Q^{(n)}$ leaves 0-cells unchanged, and for $m >0$ the $m$-cells of $Q^{(n)}(1)$ are given by the following set
\[\big( Q^{(n)}(1)\big)_m = \big( \coprod_{k \geq 0} UP_{n-1}(k)_{m-1} \times \big( Q^{(n-1)}(1) \big)^k\big)_{m-1}.\]
Now by induction we are done, since for an $m$-stage tree $\alpha$ given by a sequence $(\alpha_1, \ldots, \alpha_k)$ of $(m-1)$-stage trees:
\begin{itemize}
\item for the bottom node of the tree ($b=0$) we get a label in $P_{n-1}(k)_{m-1}$ as required, where $k$ is the arity of the node, and
\item a node of height $b>0$ is a node of height $b-1$ in one of the $(m-1)$-stage subtrees $\alpha_i$ which, by the formula above, has labels given by $Q^{(n-1)}(1)$ giving us the dimensions we require.
\end{itemize}
\end{proof}

Recall (Remark~\ref{remone}) that the nodes of height $b$ tell us about $b$-composition, so by comparison with Table~\ref{bigtable} we can check informally that the labelling described above parametrises composition in the way we expected.

We are now ready to prove the contractibility result.

\begin{proposition}\label{propcoh}
With notation as before, given $n \geq 0$ the globular operad $Q^{(n)}$ is contractible if and only if for all $0 \leq i < n$ the operad $P_i$ is contractible.
\end{proposition}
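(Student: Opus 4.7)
The plan is to leverage the nodewise description of $Q^{(n)}(1)$ provided by Proposition~\ref{propositionfourpointfive}: cells of $Q^{(n)}(1)$ over an $m$-stage tree $\alpha$ are products of labels, one label in $P_{n-h-1}(k)$ of the appropriate dimension at each node of height $h$ and arity $k$, with source and target acting nodewise by taking source (resp.\ target) of each remaining label after forgetting the top-level nodes. A preliminary observation: in $T^{(n)}(1)$ source equals target at every dimension, since applying $p\colon Q^{(n)}(1) \to T^{(n)}(1)$ to the source/target formula of Proposition~\ref{propositionfourpointfive} yields the same tree truncation either way. Thus in any lifting problem in the sense of Section~\ref{threepointtwo}, the parallel $m$-cells $u,v$ satisfy $pu=pv=\alpha$, the common truncation of the given $\gamma \in T^{(n)}(1)_{m+1}$.

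\smallskip

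For the ``if'' direction, assume each $P_i$ is contractible. I construct a lift $w \in Q^{(n)}(1)$ over $\gamma$ labelwise. At each node of $\gamma$ at height $h<m$ and arity $k$, which is already a node of $\alpha$, the source and target of $w$'s label are forced to equal the corresponding labels of $u$ and $v$ (which are parallel $(m-h-1)$-cells in $P_{n-h-1}(k)$); contractibility of $P_{n-h-1}(k)$ in $\cl{V}_{n-h-1}$ supplies a filler $(m-h)$-cell. At each new node at height $m$ of arity $k$, one needs only an unconstrained $0$-cell of $P_{n-m-1}(k)$, available from non-emptiness. For the top case $m=n$, uniqueness $u=v$ follows because height-$(n{-}1)$ labels lie in $P_0(k)=\{\ast\}$ (since $P_0$ is contractible in \cat{Set}) and so agree, while lower-height labels of $u$ and $v$ are parallel top-dimensional cells in the contractible $(n{-}h{-}1)$-category $P_{n-h-1}(k)$ and hence equal by Remark~\ref{remtwo}(1).

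\smallskip

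For the ``only if'' direction, assume $Q^{(n)}$ is contractible. Fix $0 \le i<n$, $k\ge 0$, and set $h_0=n-i-1$; I verify contractibility of $P_i(k)$ in $\cl{V}_i$ at each dimension $j \le i$. Iterating the lifting condition from $Q^{(n)}(1)_0=\{\ast\}$ yields an $m$-cell over every $m$-stage tree, so specialising to an $(n-i)$-stage tree containing an arity-$k$ principal node at height $h_0$ gives $P_i(k)_0 \neq \emptyset$. For $0<j \le i$, set $m=n-i+j$ and pick any $m$-stage tree $\alpha$ with such a principal node. Given parallel $j$-cells $a,c \in P_i(k)$, choose any cell $w_0 \in Q^{(n)}(1)$ over $\alpha$ and, using the product decomposition, substitute its principal-node label by $a$ and by $c$ respectively, producing $m$-cells $u,v$ which differ only at the principal node; since source and target are nodewise and $a,c$ are parallel, $u$ and $v$ are parallel. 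Any $(m{+}1)$-stage tree $\gamma$ with truncation $\alpha$ (for instance, one obtained by attaching a single arity-$1$ child above each height-$m$ leaf of $\alpha$) then gives a lifting problem, and the lift $w$ guaranteed by contractibility of $Q^{(n)}$ has principal-node label a $(j{+}1)$-cell of $P_i(k)$ from $a$ to $c$. For $j=i$ (so $m=n$), the top-dimensional uniqueness clause of contractibility forces $u=v$ and hence $a=c$, giving the required singleton property. The main obstacle is this ``only if" direction: one must verify that altering a single node's label preserves parallelism, which relies squarely on the product-over-nodes structure from Proposition~\ref{propositionfourpointfive} together with the nodewise action of source and target.
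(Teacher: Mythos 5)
Your proof is correct and follows essentially the same route as the paper's: both use the nodewise description of $Q^{(n)}(1)$ from Proposition~\ref{propositionfourpointfive} to reduce the sphere-filling condition to fillers in the individual $P_{n-b-1}(k)$, with non-emptiness supplying the $0$-cell labels at the new height-$m$ nodes and Remark~\ref{remtwo} forcing uniqueness at dimension $n$. Your converse is considerably more explicit than the paper's one-sentence remark, and your index $P_{n-m-1}$ at the height-$m$ nodes is the one consistent with Proposition~\ref{propositionfourpointfive} (the paper writes $P_{n-m-2}$ there); the only nit is that the range $0<j\le i$ in your converse omits the $j=0$ case of producing a $1$-cell between two $0$-cells of $P_i(k)$, though the identical argument with $m=n-i$ covers it.
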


\begin{proof}
We use the explicit construction above.  We write the underlying collection of the globular operad $Q^{(n)}$ as
\[\xy
{\ar^{p} (0,20)*+{Q};(0,10)*+{T1}};
\endxy\]
omitting the $n$ superscripts for convenience.  We consider parallel $m$-cells $a,b \in Q$ and an $(m+1)$-cell $\alpha: pa \lra pb \in T1$.
First note that the globular source and target maps in $T1$ are equal so the existence of $\alpha$ tells us $pa = pb$ as $m$-stage trees.  We write this tree as $\vardelta \alpha$; it is the $(m+1)$-stage tree $\alpha$ with the top level omitted; the $m$-cells $a$ and $b$ in $Q$ consist of labels for this tree.


Consider the case $m < n$.  We need to find an $(m+1)$-cell $x: a \lra b \in Q$ such that $px = \alpha$.  Thus, by Proposition~\ref{propfourpointfive}, we need to find a label $\alpha_\nu$ for each node $\nu$ of height $b \leq m$ of the tree $\alpha$ such that:
\begin{enumerate}
\item if $b < m$ then $s(\alpha_\nu)$ is the label for $\nu$ in $a$ and $t(\alpha_\nu)$ is the label for $\nu$ in $b$, and
\item if $b = m$ then $\alpha_\nu$ is 0-cell of $P_{n-m-2}$.
\end{enumerate}
For (1) the existence of such a cell follows from the contractibility of $P_{n-b-1}$, and for (2) the existence of such a cell comes from the non-empty condition in the contractibility of $P_{n-m-2}$.

For the case $m=n$ we need to show, under the above hypotheses, that $a=b$ i.e. both give the same labels for each node of $\vardelta \alpha$. In this case the label of a $k$-ary node of height $b$ is an $(n-b-1)$-cell of $P_{n-b-1}(k)$, a contractible \mbox{$(n-b-1)$-category}.  By contractibility there is only one such cell (see Remark~\ref{remtwo}), so we must have $a=b$.

%
%
%
%

The converse follows since if $Q$ is contractible we can use the above construction to find cells in between any parallel cells of $P_i(k)$ as required.
\end{proof}

We now sum up the results of this section in the main theorem as follows.

\begin{theorem}
Suppose we have operads $P_0, P_1, \ldots, P_{i}, \ldots$ giving an iterative operadic theory of $n$-categories, with a resulting category $\cl{V}_n$ of $n$-categories for each $n \geq0$.  Then for each $n \geq 0$ there is a contractible globular operad $Q^{(n)}$ such that
\[\cat{Alg}\ Q^{(n)} \catequiv \cl{V}_n.\]
\end{theorem}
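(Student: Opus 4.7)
The plan is essentially to assemble the three preceding results into a single statement. The hard work has already been done; this final theorem is a compilation.

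First I would invoke the second theorem of Section~\ref{fourpointone} (the one constructing $Q^{(n)}$ inductively) to obtain, for each $n \geq 0$, a monad $Q^{(n)}$ on \cat{$n$-GSet} together with the identification of its category of algebras as $\cl{V}_n$. The base case $Q^{(0)} = \cat{id}$ and the inductive step, in which we apply Theorem~\ref{thmfourpointone} with $\cl{V} = \cat{$(n-1)$-GSet}$, $T = Q^{(n-1)}$ and $P = P_{n-1}$, together deliver both the monad and the isomorphism $(\cat{\cl{V}-Gph})^{Q^{(n)}} \cong \cl{V}_n$.

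Next, I would appeal to the third theorem of Section~\ref{fourpointone}, which upgrades this monad to a globular operad: it provides a cartesian natural transformation $Q^{(n)} \Rightarrow T^{(n)}$ and shows $Q^{(n)}$ is itself cartesian. Via the characterisation recalled as Theorem~\ref{operadmonad}, this gives an $n$-dimensional globular operad whose associated monad is $Q^{(n)}$, hence whose algebras coincide with $\cl{V}_n$.

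Finally, contractibility follows from Proposition~\ref{propcoh} together with the hypothesis built into the notion of an iterative operadic theory of $n$-categories: by definition each parametrising operad $P_i$ is required to be contractible, so the hypothesis of that proposition is satisfied for all $0 \leq i < n$, and hence $Q^{(n)}$ is a contractible globular operad. Combining these three inputs yields the stated equivalence $\cat{Alg}\ Q^{(n)} \catequiv \cl{V}_n$ together with the contractibility of $Q^{(n)}$, so no real obstacle remains -- the only thing to note is that contractibility of the $P_i$ is part of the definition of an iterative operadic theory, so it need not be stated as an additional hypothesis of the theorem.
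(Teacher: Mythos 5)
Your proposal is correct and matches the paper exactly: the paper states this theorem as a summary of the section's results and gives no separate proof, the content being precisely the combination of the monad construction, the cartesian natural transformation $Q^{(n)} \Rightarrow T^{(n)}$, and Proposition~\ref{propcoh}. Your observation that contractibility of the $P_i$ is already built into the definition of an iterative operadic theory is also right, which is why the paper can assert contractibility of $Q^{(n)}$ without further hypotheses.
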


\subsection{Iterated distributive laws for weak $n$-categories}\label{fourpointtwo}

In this section we will show how iterative operadic weak $n$-categories can be constructed using iterated distributive laws as in \cite{che17}.  In the previous section we began with an iterative operadic theory of weak $n$-categories, and constructed free weak $n$-category monads $Q^{(n)}$ on \cat{$n$-GSet} by induction, as
\[Q^{(n)} = \cat{fc}_{(\cl{V}, UP)} \circ (Q^{(n-1)})_*\]
where $\cl{V}=\cat{$(n-1)$-Gph}$, and $P=P_{n-1}$.  In this section we will further discuss what this actually gives us when the induction is unravelled.  We will show that we have monads $Q^{(n)}_i$ for parametrised $i$-composition, analogous to the monads $T^{(n)}_i$ for strict $i$-composition described in \cite{che17}.

\begin{lemma}
Let $n>0$ and $0 \leq i < n$.  Set
\[Q^{(n)}_i = (\cat{fc}_{(\cl{V},P)})_{i*}\]
as a monad on \cat{$n$-Gph}, where
\begin{itemize}
\item $\cl{V}= \cat{$(n-i-1)$-GSet}$,
\item $P=U_{n-i-1}P_{n-i-1}$ where we are writing $U_{n-i-1}$ for the forgetful functor $\cl{V}_{n-i-1} \lra \cat{$(n-i-1)$-GSet}$, and
\item $(-)_{i*}$ denotes applying $(-)_*$ $i$ times.
\end{itemize}
Then $Q^{(n)}=Q^{(n)}_0 Q^{(n)}_1 \cdots Q^{(n)}_{n-1}$.
\end{lemma}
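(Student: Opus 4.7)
The plan is to prove this by induction on $n$, unfolding the recursive definition of $Q^{(n)}$ and identifying each factor in the resulting composite with one of the $Q^{(n)}_i$ on the nose. Since the recursion
\[
Q^{(n)} \;=\; \cat{fc}_{(\cl{V},UP)} \circ (Q^{(n-1)})_*,
\]
with $\cl{V}=\cat{$(n-1)$-GSet}$ and $P=P_{n-1}$, already exhibits $Q^{(n)}$ as a two-fold composite, the argument is essentially bookkeeping: check that the outer factor is $Q^{(n)}_0$ and that the suspension of the inductively-decomposed inner factor is precisely $Q^{(n)}_1 Q^{(n)}_2 \cdots Q^{(n)}_{n-1}$.

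For the base case $n=1$, the product contains a single term, and one checks directly from the definition that $Q^{(1)}_0 = \cat{fc}_{(\cat{Set},U_0P_0)} = \cat{fc}_{(\cat{Set},1)}$, viewed as a monad on $\cat{$1$-Gph}$, which agrees with $Q^{(1)}$ as defined in Section~\ref{fourpointone}. For the inductive step, first I would match indices: the definition of $Q^{(n)}_i$ specialised to $i=0$ gives $Q^{(n)}_0 = \cat{fc}_{(\cat{$(n-1)$-GSet}, U_{n-1}P_{n-1})}$, which is exactly the outer factor $\cat{fc}_{(\cl{V},UP)}$ appearing in the recursion for $Q^{(n)}$. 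Then I would use the inductive hypothesis to rewrite
\[
Q^{(n-1)} \;=\; Q^{(n-1)}_0 \, Q^{(n-1)}_1 \,\cdots\, Q^{(n-1)}_{n-2}
\]
as a composite on $\cat{$(n-1)$-Gph}$.

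Next, I would apply the suspension $(-)_*$ to both sides. The key fact is that $(-)_*$ is a $2$-functor $\cat{CAT} \lra \cat{CAT}$ (ingredient (2) recalled at the start of Section~\ref{fourpointone}) and hence preserves composition of monads, so
\[
(Q^{(n-1)})_* \;=\; (Q^{(n-1)}_0)_* \,(Q^{(n-1)}_1)_* \,\cdots\, (Q^{(n-1)}_{n-2})_*.
\]
Then I would verify the index shift $(Q^{(n-1)}_j)_* = Q^{(n)}_{j+1}$ for $0 \leq j \leq n-2$: unwinding the definition, $Q^{(n-1)}_j$ has $\cl{V} = \cat{$(n-j-2)$-GSet}$, parametrising operad $U_{n-j-2}P_{n-j-2}$, and $j$ applied suspensions, while $Q^{(n)}_{j+1}$ has the same $\cl{V}$ and parametrising operad but $j+1$ applied suspensions; applying $(-)_*$ once more to $Q^{(n-1)}_j$ raises the suspension count from $j$ to $j+1$, giving precisely $Q^{(n)}_{j+1}$.

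Combining these pieces yields
\[
Q^{(n)} \;=\; Q^{(n)}_0 \circ (Q^{(n-1)})_* \;=\; Q^{(n)}_0 \, Q^{(n)}_1 \, Q^{(n)}_2 \,\cdots\, Q^{(n)}_{n-1},
\]
completing the induction. There is no real obstacle here — each step is forced once one adopts the convention that $(-)_{i*}$ denotes $i$-fold iteration of suspension; the only mild subtlety is to make sure the domain categories match up (each $Q^{(n)}_i$ indeed lives on $\cat{$n$-Gph}$, because it arises from $i$ suspensions of a monad on $\cat{$(n-i)$-Gph}$), so that the composite on the right-hand side is well-defined as a monad on $\cat{$n$-Gph} \catequiv \cat{$n$-GSet}$.
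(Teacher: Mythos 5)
Your proof is correct and follows exactly the route the paper intends: its own proof is just ``Straightforward by induction,'' and your argument is that induction carried out in detail, with the outer factor identified as $Q^{(n)}_0$ and the index shift $(Q^{(n-1)}_j)_* = Q^{(n)}_{j+1}$ handled via the $2$-functoriality of $(-)_*$. The bookkeeping of suspension counts and domain categories all checks out, so there is nothing to add.
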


\begin{proof}
Straightforward by induction.
\end{proof}

\begin{myremark}
An algebra for $Q^{(n)}_i$ is an $n$-globular sets with parametrised $i$-composition but no other composition.
\end{myremark}

In \cite{che17} we showed that the monads
\[T^{(n)}_0, T^{(n)}_1, \ldots, T^{(n)}_{n-1}\]
for strict $i$-composition form a ``distributive series of monads'', shedding light on the interchange laws for strict $n$-categories.  We now prove analogous results for the weak case; first we recall the relevant results from \cite{che17}.

\begin{thm}\label{mainthmdist}

Fix $n \geq 3$.  Let $T_1, \ldots, T_n$ be monads on a category \cl{C}, equipped with

\begin{itemize}

\item for all $i > j$ a distributive law $\lambda_{ij}: T_i T_j \Rightarrow T_j T_i$, satisfying

\item for all $i>j>k$ the ``Yang-Baxter'' equation given by the commutativity of the following diagram

\begin{equation}\label{yb}
\xy
(0,12)*+{T_i T_j T_k}="1";
(12,24)*+{T_j T_i T_k}="2";
(36,24)*+{T_j T_k T_i}="3";
(12,0)*+{T_i T_k T_j}="4";
(36,0)*+{T_k T_i T_j}="5";
(48,12)*+{T_k T_j T_i}="6";
{\ar^{\lambda_{ij} T_k} "1";"2"};
{\ar^{T_j \lambda_{ik}} "2";"3"};
{\ar^{\lambda_{jk} T_i} "3";"6"};
{\ar_{T_i\lambda_{jk}} "1";"4"};
{\ar_{\lambda_{ik} T_j} "4";"5"};
{\ar_{T_k \lambda_{ij}} "5";"6"};
\endxy\end{equation}

\end{itemize}

\noindent Then for all $1\leq i < n$  we have canonical monads
\[T_1 T_2 \cdots T_i \quad \mbox{and} \quad T_{i+1} T_{i+2} \cdots T_n\]

\noindent together with a distributive law of \ $T_{i+1} T_{i+2} \cdots T_n$\  over \ $T_1 T_2 \cdots T_i$\ i.e.
\[(T_{i+1} T_{i+2} \cdots T_n) (T_1 T_2 \cdots T_i) \Rightarrow (T_1 T_2 \cdots T_i)(T_{i+1} T_{i+2} \cdots T_n)\]

\noindent given by the obvious composites of the $\lambda_{ij}$.  Moreover, all the induced monad structures on \ $T_1 T_2 \cdots T_n$ are the same.

\end{thm}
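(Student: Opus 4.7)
The plan is to proceed by induction on $n$, bootstrapping from Beck's theorem, which states that a distributive law $\sigma: ST \Rightarrow TS$ endows the composite $TS$ with a canonical monad structure. Everything follows from iterated applications of Beck combined with careful use of the Yang-Baxter equation to verify the distributive-law axioms at each stage.

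First I would construct the monad structure on $S_i = T_1 T_2 \cdots T_i$ inductively in $i$. Assuming that $S_{i-1}$ has been built as a monad using the $\lambda_{jk}$ with $i > j > k$, I build a distributive law $\tilde\lambda_i: T_i S_{i-1} \Rightarrow S_{i-1} T_i$ as the evident composite
\[
T_i T_1 T_2 \cdots T_{i-1} \Rightarrow T_1 T_i T_2 \cdots T_{i-1} \Rightarrow \cdots \Rightarrow T_1 T_2 \cdots T_{i-1} T_i
\]
built by applying $\lambda_{i1}, \lambda_{i2}, \ldots, \lambda_{i(i-1)}$ in turn. The nontrivial content is to check that $\tilde\lambda_i$ satisfies the distributive-law axioms against the composite multiplication $\mu^{S_{i-1}}$: since $\mu^{S_{i-1}}$ is itself built via Beck from the $\lambda_{jk}$ with $j, k < i$, this check reduces to a tiling of naturality squares together with Yang-Baxter hexagons for triples $(T_i, T_j, T_k)$ with $i > j > k$. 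Once this is in place, Beck gives $S_i = S_{i-1} T_i$ the structure of a monad. The same argument run on $T_{i+1}, \ldots, T_n$ yields the monad structure on $T_{i+1}\cdots T_n$, and the distributive law between the two halves is assembled by iteratively moving each $T_j$ with $j > i$ past $T_1, \ldots, T_i$, again invoking Yang-Baxter to verify the axioms.

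For the uniqueness clause, the key observation is that every induced monad structure on $T_1 \cdots T_n$ is determined by a word in the elementary $\lambda_{jk}$'s that sends each $T_j$ past each $T_k$ with $j > k$ exactly once. Any two such words are connected by Yang-Baxter moves together with commuting moves between disjoint transpositions, so the resulting composite multiplications coincide. The principal obstacle will be the bookkeeping in the inductive step: verifying compatibility of $\tilde\lambda_i$ with $\mu^{S_{i-1}}$ unfolds into a large pasting diagram that must be filled in by repeated application of Yang-Baxter. The cleanest organisational device is to regard each $\lambda_{jk}$ as a braid generator and the Yang-Baxter equation as the braid relation; compatibility then becomes the assertion that two braid diagrams with the same underlying permutation define the same 2-cell, which is precisely what the hypothesis gives.
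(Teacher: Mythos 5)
The paper does not actually prove Theorem~\ref{mainthmdist}: it is recalled without proof from \cite{che17}, where the argument runs exactly as you outline --- an induction building $\tilde\lambda_i\colon T_iT_1\cdots T_{i-1}\Rightarrow T_1\cdots T_{i-1}T_i$ as the composite of the $\lambda_{ij}$, with the Yang--Baxter hexagons (plus naturality/interchange for the disjointly-positioned squares) supplying compatibility with the composite multiplication, and the uniqueness clause reduced to the observation that any two reduced words in the $\lambda_{jk}$ realising the same interleaving permutation are connected by Yang--Baxter and far-commutation moves. Your sketch is correct and is essentially the same argument as the cited source.
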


\begin{mydefinition}
We refer to a series of monads as above as a \demph{distributive series of monads}.
\end{mydefinition}

\begin{thm}\label{fourpointseven}
Let $n \geq 0$. For all $0 \leq i < n$ there is a monad $T^{(n)}_i$ for free $i$-composition on \cat{$n$-GSet}.  For $n \geq 2$ and $n>i>j\geq 0$ there is a distributive law
\[\lambda^{(n)}_{ij}: T^{(n)}_i T^{(n)}_j \Rightarrow T^{(n)}_j T^{(n)}_i\]
given by the interchange law of $i$-composition and $j$-composition.
For $n \geq 3$ the monads $T^{(n)}_0, \cdots, T^{(n)}_{n-1}$ on \cat{$n$-GSet} form a distributive series of monads.  The resulting composite monad $T^{(n)}_0 T^{(n)}_1 \cdots T^{(n)}_{n-1}$ is the free strict $n$-category monad $T^{(n)}$ on \cat{$n$-GSet}.
\end{thm}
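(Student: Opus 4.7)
The plan is to follow the template of \cite{che17} but express everything in terms of the suspension and free $\cl{V}$-category monad machinery developed in the excerpt, so that the statement becomes a direct specialisation of the preceding theory with $P=1$.

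First I would define the monad $T^{(n)}_i$ on \cat{$n$-GSet} as the $i$-fold suspension
\[T^{(n)}_i = (\cat{fc}_{\cl{V}})_{i*}\]
where $\cl{V} = \cat{$(n-i-1)$-GSet}$, exactly analogous to the $Q^{(n)}_i$ of the lemma preceding this theorem but with trivial parametrising operad. Since $\cat{fc}_{\cl{V}}$ is cartesian and coproduct-preserving (Theorem~\ref{fcv}), and suspension preserves both properties, each $T^{(n)}_i$ is cartesian. An algebra for $T^{(n)}_i$ is an $n$-globular set equipped with free strict $i$-composition and no other composition, which justifies the name.

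Next, for each pair $n>i>j\geq 0$ I construct the distributive law $\lambda^{(n)}_{ij}$ by invoking Theorem~\ref{theoremfourpointone} in the special case $P=1$, which yields a distributive law $T_* \circ \cat{fc}_\cl{V} \Rightarrow \cat{fc}_\cl{V} \circ T_*$ for any coproduct-preserving cartesian monad $T$ on a presheaf category $\cl{V}$. Taking $T = (\cat{fc}_{\cl{W}})_{(i-j-1)*}$ with $\cl{W} = \cat{$(n-i-1)$-GSet}$, and then suspending the whole distributive law $j$ more times, produces a distributive law on \cat{$n$-GSet} of $T^{(n)}_i$ over $T^{(n)}_j$. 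Concretely this encodes the strict interchange of $i$- and $j$-composition: the left-hand side applies $j$-composites to $i$-composed pasting diagrams, and the distributive law rearranges these into $i$-composites of $j$-composed diagrams.

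The main obstacle is verifying the Yang--Baxter equation~(\ref{yb}) for each triple $i>j>k$. This amounts to checking that the two ways of rearranging a triply-interchanged configuration through the pairwise distributive laws coincide; because each of the pairwise laws is ``identity on components plus a reshuffling of the product structure coming from cartesianness'', the two composites around the hexagon are both equal to the canonical universal map determined by the cartesian structure. This coherence was carried out in detail in \cite{che17} and the same argument applies verbatim here since only the monad structure of $\cat{fc}_{\cl{V}}$ (not any weakening by an operad) is used.

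Granted Yang--Baxter, Theorem~\ref{mainthmdist} immediately supplies the distributive series structure on $T^{(n)}_0, \ldots, T^{(n)}_{n-1}$ and hence a composite monad $T^{(n)}_0 T^{(n)}_1 \cdots T^{(n)}_{n-1}$. To identify this composite with $T^{(n)}$, I would observe that algebras for the composite are, by the theory of distributive laws, globular sets equipped with compatible strict $i$-composition for each $0 \leq i < n$, which is exactly the data of a strict $n$-category; a routine check comparing the explicit free constructions on a generic $n$-globular set then upgrades this equivalence of algebra categories to an isomorphism of monads.
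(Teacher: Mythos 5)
You should note first that the paper itself offers no proof of Theorem~\ref{fourpointseven}: it is imported wholesale from \cite{che17}, where the monads $T^{(n)}_i$, the pairwise interchange laws and the Yang--Baxter equations are constructed and checked directly for strict $n$-categories. Your proposal takes a genuinely different route, deriving the theorem as the $P=1$ specialisation of the machinery of Sections~\ref{two} and~\ref{fourpointone}: you set $T^{(n)}_i=(\cat{fc}_{\cl{V}})_{i*}$ with $\cl{V}=\cat{$(n-i-1)$-GSet}$ (the trivial-operad case of the lemma defining $Q^{(n)}_i$, using $\cat{fc}_{(\cl{V},1)}=\cat{fc}_{\cl{V}}$), and you obtain $\lambda^{(n)}_{ij}$ by applying Theorem~\ref{theoremfourpointone} with $P=1$ and then suspending $j$ times; your index bookkeeping there is correct, and this buys a uniform treatment in which the strict and weak distributive series are two instances of one construction --- very much in the spirit of the section. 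What it costs is that the two pieces of genuine work remain deferred. The Yang--Baxter hexagon is asserted on the grounds that both composites are ``the canonical map''; this is essentially right, since with $P=1$ the components of $\lambda$ reduce to the product-comparison maps $T(X_1\times\cdots\times X_k)\lra TX_1\times\cdots\times TX_k$, but that observation should be made explicit, as it is the actual content of the verification in \cite{che17}. And the identification of the composite with $T^{(n)}$ is cleaner by direct induction on $n$ from $T^{(n)}=\cat{fc}_{\cl{V}}\circ(T^{(n-1)})_*$ together with the fact that suspension is a $2$-functor preserving monads and distributive laws (exactly as in the proof of the lemma expressing $Q^{(n)}$ as $Q^{(n)}_0\cdots Q^{(n)}_{n-1}$), rather than by comparing categories of algebras, where you must additionally check that the equivalence commutes with the forgetful functors to \cat{$n$-GSet} before you may conclude an isomorphism of monads --- a point you acknowledge only in passing.
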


The analogous results hold for the weak case as follows.

\begin{theorem}
Let $n \geq 3$. Then the monads
\[Q^{(n)}_0, \cdots, Q^{(n)}_{n-1}\]
on \cat{$n$-GSet} form a distributive series of monads as in Theorem~\ref{mainthmdist}.
\end{theorem}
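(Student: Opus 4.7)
The plan is to proceed by induction on $n$, paralleling the strict case from \cite{che17}. Recall the two key identities: $Q^{(n)}_0 = \cat{fc}_{(\cl{V},UP_{n-1})}$ with $\cl{V} = \cat{$(n-1)$-GSet}$, and $Q^{(n)}_i = (Q^{(n-1)}_{i-1})_*$ for $i \geq 1$. The inductive hypothesis supplies both the distributive laws $\lambda^{(n-1)}_{ij}$ for $n-1 > i > j \geq 0$ and their Yang-Baxter equations.

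To construct $\lambda^{(n)}_{ij}$ for $n > i > j \geq 0$, I would split into two cases. When $j \geq 1$, both monads are suspensions, so applying $(-)_*$, which preserves distributive laws, to $\lambda^{(n-1)}_{(i-1)(j-1)}$ yields $\lambda^{(n)}_{ij}$. When $j = 0$, I would invoke Theorem~\ref{keynew} with $\cl{V} = \cat{$(n-1)$-GSet}$, $T = Q^{(n-1)}_{i-1}$, and $P = P_{n-1}$: the operad $P_{n-1}$ inherits a $Q^{(n-1)}_{i-1}$-algebra structure from its $Q^{(n-1)}$-algebra structure via the distributive series decomposition of $Q^{(n-1)}$, and the required coproduct preservation follows from that of $\cat{fc}_{(\cl{V},UP)}$ together with the inductive hypothesis.

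Verifying the Yang-Baxter equation for each triple $n > i > j > k \geq 0$ again splits into two cases. For $k \geq 1$, applying the $(-)_*$ 2-functor to the YB hexagon for $\lambda^{(n-1)}_{(i-1)(j-1)}$, $\lambda^{(n-1)}_{(j-1)(k-1)}$, $\lambda^{(n-1)}_{(i-1)(k-1)}$ is immediate. The substantive case is $k = 0$, where one must compare the suspended $\lambda^{(n)}_{ij}$ with the two Theorem~\ref{keynew}-constructed distributive laws $\lambda^{(n)}_{i0}$ and $\lambda^{(n)}_{j0}$. I would reduce this to the following compatibility lemma: if $T = SR$ is a composite of monads with a distributive law $RS \Rightarrow SR$, then the distributive law of Theorem~\ref{keynew} for $T$ factors as the pasting of those produced by Theorem~\ref{keynew} for $R$ and $S$ separately. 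This follows from the explicit construction of $\lambda$ in Theorem~\ref{keynew} via the algebra action $TU(P(k)) \to U(P(k))$, together with the fact that such actions decompose compatibly through the distributive series structure.

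The main obstacle is precisely this compatibility check underlying the $k = 0$ Yang-Baxter equation, as it is the one verification not reducible to the inductive hypothesis via the suspension functor. Everything else is a formal consequence of 2-functoriality of $(-)_*$ and naturality in $T$ of the construction in Theorem~\ref{keynew}. Conceptually, the compatibility says that the parametrised interchange between $0$-composition and any higher-dimensional composition is consistent with how higher-dimensional compositions interchange among themselves, which is a coherence built into the iterative construction of $Q^{(n)}$ from the outset.
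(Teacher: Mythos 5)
Your proposal is correct and follows essentially the same route as the paper, whose entire proof reads: the distributive laws are instances of Theorem~\ref{keynew} and the Yang--Baxter equations are straightforward to check. Your inductive unpacking --- suspending $\lambda^{(n-1)}_{(i-1)(j-1)}$ for $j\geq 1$, invoking Theorem~\ref{keynew} for $j=0$ with $P_{n-1}$ regarded as an operad in $Q^{(n-1)}_{i-1}$-algebras via the decomposition of $Q^{(n-1)}$, and isolating the $k=0$ Yang--Baxter hexagon as the one substantive compatibility check --- is precisely the detail the paper leaves implicit.
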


\begin{proof}
The distributive laws are given by instances of the distributive law of Theorem~\ref{keynew}.  It is straightforward to check that the Yang-Baxter equations hold.
\end{proof}

Here the distributive laws come from ``parametrised interchange laws'' -- since composition is parametrised by the action of the operads $P_i$, we must take the operads into account when describing interchange.

To shed some light on this we will give an example for the usual interchange of horizontal and vertical composition.  In a strict $n$-category, the usual interchange law says that given composable 2-cells
%
\[\xy
(30,0)*{ 	
\xy
(0,0)*{
\xy
(-3,0)*+{.}="1";
(13,0)*+{.}="2";
{\ar@/^1.55pc/^{} "1";"2"};
{\ar@/_1.55pc/_{} "1";"2"};
{\ar "1";"2"};
{\ar@{=>}^{a} (5,5)*{};(5,1)*{}};
{\ar@{=>}^{c} (5,-1)*{};(5,-5)*{}};
\endxy};
(16,0)*{       
\xy
(-3,0)*+{.}="1";
(13,0)*+{.}="2";
{\ar@/^1.55pc/^{} "1";"2"};
{\ar@/_1.55pc/_{} "1";"2"};
{\ar "1";"2"};
{\ar@{=>}^{b} (5,5)*{};(5,1)*{}};
{\ar@{=>}^{d} (5,-1)*{};(5,-5)*{}};
\endxy}
\endxy}
\endxy\]
we have
\[(d * c) \circ (b*a) = (d \circ b) * (c \circ a).\]
This corresponds to the distributive law
\[T^{(n)}_1 T^{(n)}_0 \Rightarrow T^{(n)}_0 T^{(n)}_1.\]
%
%
%
%

\noindent In the case of an iterative operadic $n$-category, we use the distributive law
\[Q^{(n)}_1 Q^{(n)}_0 \Rightarrow Q^{(n)}_0 Q^{(n)}_1.\]
Examining the formulae given in Proposition~\ref{propositionfourpointfive} we find that for our underlying data on the left hand side we now have:
\begin{itemize}
\item composable 2-cells $a,b,c,d$ as above,
\item for parametrising the two instances of 0-composition, composable 1-cells $f,g$ of $P_{n-1}(2)$, and
\item for parametrising the 1-composition, a 0-cell $\alpha$ of $P_{n-2}(2)$.
\end{itemize}
Then the parametrised interchange law says:
\[(d *_g c) \circ_\alpha (b*_f a) = (d \circ_\alpha b) *_{g \circ_\alpha f} (c \circ_\alpha a)\]
where $*_g$ denotes 0-composition parametrised by $g$; likewise $*_f$ and $\circ_\alpha$. Note that $g \circ_\alpha f$ is the result of composing $f$ and $g$ parametrised by $\alpha$ -- recall that $f$ and $g$ are 1-cells of $P_{n-1}(2) \in \cl{V}_{n-1}= \cat{$(\cl{V}_{n-2}, P_{n-2})$-Cat}$ and so $\alpha$ is a valid cell to parametrise the composition of 1-cells $f$ and $g$.  This part of the interchange law comes from the part of the proof of Theorem~\ref{keynew} involving the algebra action $TU(P(k)) \lra U(P(k))$.
Alternatively we could in general write $\alpha(g,f)$ instead of $g \circ_\alpha f$ in which case the parametrised distributive law becomes
\[\alpha\big(g(d,c),f(b,a)\big) = \alpha(g,f)\big(\alpha(d,b),\alpha(c,a)\big)\]
which emphasises the connection with operad composition, but leaves the connection with the original interchange law rather less obvious.

The general interchange law can be written down similarly.

\section{What Trimble's operad gives}\label{five}

In this section we apply our constructions to Trimble's original operad $E$ for $n$-categories, and conjecture that the resulting globular operad is a suboperad of the one Batanin uses to take fundamental $n$-groupoids of a space.  We give an operad morphism from one to the other whose properties we will study in a future work.

Again, we use the ``non-algebraic Leinster'' variant of Batanin's operad; this is the operad used by Cisinski in his work on Batanin fundamental $n$-groupoids \cite{cis1}.

\subsection{Batanin's operad}

To define Batanin's operad we need the geometric realisation of pasting diagrams, via associated globular sets.  Globular pasting diagrams arise among globular sets as those that are connected and loop-free. The globular set associated to a pasting diagram can also be constructed directly as in \cite[Section 8.1]{lei8} or by induction using the usual expression of an $m$-pasting diagram as a series of $(m-1)$-pasting diagrams, together with a ``suspension functor'' $\sigma$ that turns every $k$-cell into a $(k+1)$-cell, and adjoins two new 0-cells to be the source and target of all the other cells.

We will not go into the details here.  The idea is simply that, given a pasting diagram, we can form a globular set which consists of all the cells we have actually ``drawn'' in the pasting diagram, and we can then geometrically realise it.

\begin{mydefinition}
The \demph{geometric realisation} of a globular set is a functor
\[ |-| : \cat{GSet} = [\bb{G}^{\op}, \cat{Set}] \lra \cat{Top}\]
defined by Kan extension as follows.  We have a functor
\[ |-| : \bb{G} \lra \cat{Top} \]
defined by sending an object $m \in \bb{G}$ to the Euclidean $m$-disc $D^m$; cosource and cotarget maps are given by the lower and upper hemisphere inclusions and coidentity operators are given by orthogonal projection $D^{m+1} \lra D^m$.
We then take the left Kan extension
%
\[\xy
(0,25)*+{\bb{G}}="1";
(25,25)*+{[\bb{G}^{\op}, \cat{Set}]}="2";
(0,0)*+{\cat{Top}}="3";
{\ar^<<<<<<<{\cat{Yon}} "1";"2"};
{\ar_{|-|} "1";"3"};
{\ar@{-->}_{} "2";"3"};
\endxy\]
This is the geometric realisation functor for globular sets. Given a pasting diagram $\alpha$ we also write $|\alpha|$ for the geometric realisation of its associated pasting diagram.
\end{mydefinition}

We are now ready to describe for each $n \geq 0$ the operad $K^{(n)}$ used by Batanin to define fundamental $n$-groupoids of a space.  We first define the $\omega$-dimensional version $K$; see for example \cite[Proposition 9.2]{bat1} and \cite[Example 9.2.7]{lei8}.

\begin{mydefinition}
We write $K$ for the globular operad defined as follows.  Given an $m$-pasting diagram $\alpha$, the elements of $K_\alpha$ are the continuous maps $D^m \lra |\alpha|$ respecting the boundaries.  In order to say this more precisely, consider an $m$-dimensional pasting diagram $\alpha$, with source and target $\vardelta \alpha$. We have inclusion maps
\[\xy
(120,0)*+{\alpha}="0";
(100,0)*+{\partial\alpha}="1";
{\ar@<0.7ex>^{s} "1"; "0"};
{\ar@<-0.7ex>_{t} "1"; "0"};
\endxy\]
on the associated globular sets, and thus we have maps in \cat{Top}
\[\xy
(120,0)*+{|\alpha|}="0";
(100,0)*+{|\partial\alpha|}="1";
{\ar@<0.7ex>^{s} "1"; "0"};
{\ar@<-0.7ex>_{t} "1"; "0"};
\endxy.\]
To be ``boundary preserving'', our map $D^m \map{f} |\alpha|$ must have restrictions making the following diagrams commute
%
\[\xy
(0,20)*+{D^{m-1}}="1";
(20,20)*+{|\vardelta\alpha|}="2";
(0,0)*+{D^m}="3";
(20,0)*+{|\alpha|}="4";
{\ar@{..>}^{sf} "1";"2"};
{\ar^{s} "2";"4"};
{\ar_{s} "1";"3"};
{\ar_{f} "3";"4"};
\endxy
\hspace{4em}
\xy
(0,20)*+{D^{m-1}}="1";
(20,20)*+{|\vardelta\alpha|}="2";
(0,0)*+{D^m}="3";
(20,0)*+{|\alpha|}="4";
{\ar@{..>}^{tf} "1";"2"};
{\ar^{t} "2";"4"};
{\ar_{t} "1";"3"};
{\ar_{f} "3";"4"};
\endxy\]
Note that we do not have to have $sf = tf$; this is in fact crucial to allow for coherence maps between \emph{different composites} of cells.  In the language of Batanin, we have a map of ``coglobular spaces''.  Finally for the finite-dimensional case $K^{(n)}$ we take homotopy classes at dimension $n$.

Note that all these operads are contractible, so algebras for it are indeed $n$-categories.
\end{mydefinition}

\begin{myexample}
Continuing with the pasting diagram given in Example~\ref{pdex}, the boundary preserving maps $D^2 \lra |\alpha|$ are maps of spaces depicted below

\psset{yunit=0.6cm,xunit=0.6cm,runit=0.6cm}
\pspicture(-7.5,-3)(0,3)
$\psdots(-6,0)(-4,0)
\parabola[fillstyle=hlines,hatchsep=2pt](-6,0)(-5,0.5)
\parabola[fillstyle=hlines,hatchsep=2pt](-6,0)(-5,-0.5)
\psline[arrows=->](-2.8,0)(-1.2,0)
\uput[180](-6,0){x_0}
\uput[0](-4,0){x_1}
\uput[180](0,0){y_0}
\uput[0](8,0){y_1}
\parabola[fillstyle=hlines,hatchsep=2pt](0,0)(1,1)
\parabola[fillstyle=hlines,hatchsep=2pt](0,0)(1,-1)
\parabola[fillstyle=hlines,hatchsep=2pt](2,0)(3,0.5)
\parabola[fillstyle=hlines,hatchsep=2pt](2,0)(3,-0.5)
\parabola[fillstyle=hlines,hatchsep=2pt](6,0)(7,2)
\parabola[fillstyle=hlines,hatchsep=2pt](6,0)(7,-2)
\parabola(6,0)(7,1)
\parabola(6,0)(7,-1)
\psline(0,0)(2,0)
\psline(4,0)(8,0)
\psdots(0,0)(8,0)$
\endpspicture

\noindent sending $x_0$ to $y_0$, $x_1$ to $y_1$, the top edge to the top edge, and the bottom edge to the bottom edge; however the last two maps may be different.  Note that the maps are only required to be continuous so may still be quite pathological; one consequence of using Trimble's operad will be that we eliminate some of this pathology.
\end{myexample}

\begin{myexample}
For the 1-cells of $K^{(n)}$ the arities are just natural numbers $k$, and the geometric realisation of the pasting diagram of arity $k$ is just the closed interval $[k]$.  Thus the elements of $K^{(n)}$ of arity $k$ are the same as the points of $E(k)$, for Trimble's operad $E$.  This indicates the sense in which $K$ is a higher-dimensional version of $E$.
\end{myexample}

\begin{myremark}
Since each $|\alpha|$ is contractible it follows that $K$ is contractible.  Furthermore, given a space $X$ we can associate to it a globular set whose 0-cells are the points of the space, 1-cells the paths, 2-cells homotopies between paths, 3-cells the homotopies between homotopies, and so on; $K$ then acts naturally on this globular set making it an $n$-category, which Batanin defines to be the fundamental $n$-groupoid of $X$.
\end{myremark}

\subsection{Comparison}

We now compare the globular operad associated to Trimble's operad $E$ with Batanin's operad $K$.  The main idea of this construction is that we need to turn Trimble's maps of intervals into Batanin's maps of discs; we do this via the following topological suspension functor.

\begin{mydefinition}

We define a functor $\sigma: \cat{Top} \lra \cat{Top}$ as follows.  Given a space $X$, define
\[\sigma X = I \times X / \sim\]
where $\sim$ is the equivalence relation defined by $(0,x) \sim (0,x')$ and $(1,x) \sim (1,x')$ for all $x,x' \in X$.
\end{mydefinition}

Some examples are sketched below.
\[\begin{array}{|c|c|}
\hline \hspace{4em} & \hspace{9em} \\[-6pt]
X \in \cat{Top} &\sigma X \\[6pt]
\hline & \\[-6pt]
\bullet & \parbox[c]{4em}{\xy
(0,0)*{\bullet}="1";
(10,0,)*{\bullet}="2";
"1";"2" **\dir{-}; \endxy} \\[6pt]
\hline &\\
\parbox[c]{4em}{\xy
(0,0)*{\bullet}="1";
(0,10,)*{\bullet}="2";\endxy} &
\parbox[c][4em]{6em}{
\psset{yunit=0.6cm,xunit=0.6cm,runit=0.6cm}
\pspicture(-6.8,-2)(-6,2)
\psdots(-6,0)(-4,0)
\parabola(-6,0)(-5,0.5)
\parabola(-6,0)(-5,-0.5)
\endpspicture}
\\
\hline & \\
\parbox[c]{4em}{\xy
(0,0)*{\bullet}="1";
(0,10,)*{\bullet}="2";
"1";"2" **\dir{-}; \endxy} &
\parbox[c][4em]{6em}{
\psset{yunit=0.6cm,xunit=0.6cm,runit=0.6cm}
\pspicture(-6.8,-2)(-6,2)
\psdots(-6,0)(-4,0)
\parabola[fillstyle=hlines,hatchsep=2pt](-6,0)(-5,0.5)
\parabola[fillstyle=hlines,hatchsep=2pt](-6,0)(-5,-0.5)
\endpspicture}
\\[2em]
\hline
\end{array}\]

\begin{myremarks} \hspace*{1em}
\begin{enumerate}
\item As the above sketch suggests, iterating $\sigma$ gives us a way of constructing topological $m$-discs (up to homeomorphism), as for each $m \geq 0$ we have
    \[\sigma D^m \iso D^{m+1}.\]
\item Topological suspension is related to the globular set suspension functor
    \[\sigma: \cat{GSet} \lra \cat{GSet}\]
mentioned at the beginning of the section. It is not too hard to prove that for a pasting diagram $\alpha$ we have
\[|\sigma \alpha | \iso \sigma |\alpha|.\]

\item Suspension gives us a way of using induction to construct geometric realisation -- if $\alpha$ is an $m$-pasting diagram given by the series $(\alpha_1, \ldots , \alpha_k)$ of $(m-1)$-pasting diagrams, it follows from (2) that
    \[|\alpha| = \sigma |\alpha_1| +_0 \cdots +_0 \sigma |\alpha_k|\]
    where $\sigma X +_0 \sigma Y$ denotes the obvious pushout

    \[\xy
(0,15)*+{\ast}="t";
   (-15,0)*+{\sigma X}="l";
   (15,0)*+{\sigma Y}="r";
   (0,-15)*+{\cdot}="b";
       {\ar_{1} "t";"l"};
       {\ar^{0} "t";"r"};
       {\ar "r";"b"};
       {\ar "l";"b"};
(0,-10)*{\ucorner};
\endxy \]
\end{enumerate}
\end{myremarks}

We are now ready to define our comparison morphism.

\begin{theorem}
Let $n \geq 0$. Let $Q^{(n)}$ be the globular operad associated to Trimble's original theory of $n$-categories, and $K^{(n)}$ the operad defined above.  Then there is a morphism of operads
\[\theta: Q^{(n)} \lra K^{(n)}.\]
\end{theorem}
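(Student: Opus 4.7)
The plan is to define $\theta$ componentwise: for each $m$-pasting diagram $\alpha$ give a map $\theta_\alpha : Q^{(n)}(1)_\alpha \to K^{(n)}_\alpha$, by induction on $m$, and then verify that this assignment respects globular source/target, the unit, and operadic composition. The preparatory observation supporting the whole construction is that an $(m-1)$-cell of $P_{n-1}(k) = \Pi_{n-1}(E(k))$ (the label carried by the bottom node of an $m$-stage tree, by Proposition~\ref{propositionfourpointfive}) can be canonically represented as a boundary-preserving continuous map $\tilde f : D^{m-1} \to E(k)$, or equivalently as a map $D^{m-1} \times I \to [0,k]$ that is endpoint-preserving in the $I$ factor. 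This follows by unravelling Trimble's iterated path-space definition of $\Pi_{n-1}$ and identifying each stage of iterated paths with the corresponding topological suspension of discs, using that $D^{m-1} \cong \sigma D^{m-2}$; at top dimension $m-1 = n-1$ we pick a representative of the homotopy class.

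For $m = 0$ both sides are singletons. For $m \geq 1$, an $m$-stage tree $\alpha = (\alpha_1, \ldots, \alpha_k)$ carries the bottom-node label $f$ as above, together with labellings on each $(m-1)$-stage subtree $\alpha_i$. The index shift built into Proposition~\ref{propositionfourpointfive} means these subtree labellings are precisely elements of $Q^{(n-1)}(1)_{\alpha_i}$, so induction gives maps $\phi_i : D^{m-1} \to |\alpha_i|$. Using the suspension identifications $D^m \cong \sigma D^{m-1}$ and $|\alpha| \cong \sigma|\alpha_1| +_0 \cdots +_0 \sigma|\alpha_k|$, I define
$$\theta_\alpha(f,\phi_1,\ldots,\phi_k)(t,x) \;=\; [\,s,\phi_i(x)\,] \;\in\; \sigma|\alpha_i| \;\subset\; |\alpha|,$$
where $\tilde f(x)(t) = (i-1)+s$ with $i \in \{1,\ldots,k\}$ and $s \in [0,1]$. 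Continuity across the integer crossings is automatic: at $s=1$ in $\sigma|\alpha_i|$ and $s=0$ in $\sigma|\alpha_{i+1}|$ both values land at the shared gluing $0$-cell in $|\alpha|$. Descent from $I \times D^{m-1}$ to $\sigma D^{m-1} = D^m$ follows from the endpoint preservation $\tilde f(x)(0) = 0$ and $\tilde f(x)(1) = k$.

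To see that $\theta$ is a morphism of globular operads, I check compatibility with boundaries, unit, and composition. The globular source and target of a labelled tree on the $Q^{(n)}$ side are computed (Proposition~\ref{propositionfourpointfive}) by dropping the top tree layer and applying source/target to the remaining labels, and this matches the boundary behaviour of $\theta_\alpha$ by the preparatory step. The unit of $Q^{(n)}$ uses identity cells of each $P_i$; the identity in $E$ is $[1] \to [1]$, whose iterated disc representative is the identity on discs, so $\theta$ sends the unit to the identity of $K^{(n)}$. For operadic composition, substitution of disc-maps in $K^{(n)}$ is geometric gluing by suspension and pushout along bounding cells, while composition in $Q^{(n)}$ is built from the operadic composition of the $P_i = \Pi_{n-i-1}(E)$, which is the reparametrisation composition in $E$; the two coincide under the construction because reparametrising $[1]\to[k]$ and then reparametrising each $[i-1,i]$ inside $[0,k]$ is the very formula by which the $\phi_i$ are inserted into $\sigma|\alpha_i|$. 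The main obstacle is the preparatory disc-representation step: setting up a coherent, functorial identification of cells of $\Pi_{n-1}(E(k))$ with boundary-preserving disc maps in a way that is stable under source/target and under operadic composition. Once this is done cleanly, the inductive construction and the operad-axiom checks reduce to a direct comparison of ``reparametrise then suspend'' on the two sides.
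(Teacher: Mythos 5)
Your construction is essentially identical to the paper's: both peel off the bottom-node label $\beta \in P_{n-1}(k)_{m-1}$, reinterpret it as an endpoint-preserving map $I^{m-1}\times I \lra [k]$ in order to chop $D^m$ into $k$ discs glued along $0$-cells, and then insert the suspensions $\sigma\theta\tau_i$ of the inductively obtained maps, with the same routine checks of descent to the quotient, boundary preservation, and compatibility with operadic composition. The only cosmetic difference is that you phrase the induction on the dimension $m$ of the pasting diagram while the paper phrases it on $n$; the underlying recursion on the labelled tree is the same.
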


\begin{proof}
Let $\alpha$ be an $m$-stage tree as below as usual:
%
\[\xy
(-12,10)*+{\alpha_1}="2";
(-6,10)*+{\alpha_2}="3";
(0,10)*+{\phantom{\alpha_3}}="4";
(2,10)*+{\cdots};
(12,10)*+{\alpha_k}="5";
(0,0)*{\bullet}="1";
"1";"2" **\dir{-};
"1";"3" **\dir{-};
"1";"4" **\dir{-};
"1";"5" **\dir{-};
\endxy.\]
We know from Proposition~\ref{propositionfourpointfive} that
\[Q^{(n)}_\alpha = P_{n-1}(k)_{m-1} \times Q^{(n-1)}_{\alpha_1} \times \cdots \times Q^{(n-1)}_{\alpha_k}\]
and we use this formula to construct our morphism by induction over $n$. For $n=0$ we have $Q^{(0)}=K^{(0)}$, so we set $\theta$ to be the identity.

For $n \geq 1$ consider $\tau \in Q^{(n)}_\alpha$ given by
\[(\beta, \tau_1, \cdots, \tau_k) \in P_{n-1}(k)_{m-1} \times Q^{(n-1)}_{\alpha_1} \times \cdots \times Q^{(n-1)}_{\alpha_k}.\]
We need to construct a map $\theta\tau : D^m \lra |\alpha|$ preserving the boundary.  By induction we already have for each $1 \leq i \leq k$ a boundary-preserving map
\[\theta \tau_i : D^{m-1} \lra |\alpha_i| \]
and thus
\[\sigma \theta \tau_i : \sigma D^{m-1} \iso D^m \lra \sigma |\alpha_i|.\]
We form the following composite
\[
\xy
(0,0)*+{D^m}="1";
(30,-3)*+{\underbrace{{\white D^m +_0 \cdots +_0 D^m}}_{\textsf{$k$ times}}}="";
(30,0)*+{D^m +_0 \cdots +_0 D^m}="2";
(100,0)*+{\sigma |\alpha_1| +_0 \cdots +_0 \sigma |\alpha_k| \iso |\alpha|}="3";
{\ar^{} "1";"2"};
{\ar^>>>>>>>>>>>>>>>>>{\sigma \theta\tau_1 +_0 \cdots +_0 \sigma \theta \tau_k} "2";"3"};
\endxy
\]
where the first component is given as follows.  We have a map
\[
\xy
(0,0)*+{I^m}="1";
(45,0)*+{I^{m-1} \times [k]}="2";
(95,-3)*+{\underbrace{{\white D^m +_0 \cdots +_0 B^m}}_{\mbox{\textsf{$k$ times}}}}="3";
(95,0)*+{D^m +_0 \cdots +_0 D^m}="3";
(67,5)*+{\mbox{\textsf{quotient}}}="";
{\ar^>>>>>>>>>>>>>>>>>>{\overbrace{{\scriptstyle 1 \times \cdots \times 1}}^{\mbox{\textsf{$m-1$ times}}} \hspace{-1.3em} \times  \beta} "1";"2"};
{\ar^>>>>>>>>>>>>>>{\sim} "2";"3"};
(0,-10)*+{(x_1, \ldots, x_m)}="11";
(45,-10)*+{\big(x_1, \ldots, x_{m-1}, \beta(x_1, \ldots, x_m)\big)}="12";
{\ar@{|->} "11";"12"};
\endxy
\]
and it is easy to check that this respects the equivalence relation giving $D^m = I^m/\sim$ on each component, hence induces a map
\[D^m \lra D^m +_0 \cdots +_0 D^m.\]
It is straightforward to check that this is an operad morphism, essentially by induction and continuity of $\beta$.

\end{proof}

In effect where Batanin's operations give us \emph{all} reparametrisations of the topological pasting diagram, Trimble's operations just give us those reparametrisations which come from repeated suspensions of 1-dimensional reparametrisations.  This eliminates many of the pathological reparametrisation maps allowed by Batanin's operad.
In future work we hope to prove that $\theta$ is an embedding, and to use $Q^{(n)}$ and other convenient suboperads of $K$ to study the modelling of homotopy types.  Batanin has conjectured that his $n$-groupoids model $n$-types.  Cisinski has proved in \cite{cis1} that the \emph{$\omega$-categories} model homotopy types, and for the case of $\omega$-groupoids has demonstrated a faithful and conservative embedding of the homotopy category of spaces in the homotopy category of $\omega$-groupoids.  However, the full result for higher groupoids remains an open problem.

%
%


\ed